\providecommand{\E}{\mathcal{E}}
\providecommand{\chibar}{\overline{\chi}}
\providecommand{\Etwo}{\mathbb{E}^2}
\numberwithin{theorem}{section}
\numberwithin{equation}{section}
\date{\today}
\title[The CAT(0) property for the manifold of metrics]{Geodesics,
  distance, and the CAT(0) property for the manifold of Riemannian
  metrics}
\author{Brian Clarke}
\thanks{This research was supported by NSF grant DMS-0902674.}
\address{Department of Mathematics, Stanford University, Stanford, CA
  94305-2125}
\email{\href{mailto:bfclarke@math.stanford.edu}{bfclarke@math.stanford.edu}}
\urladdr{\href{http://math.stanford.edu/~bfclarke/}{http://math.stanford.edu/${\sim}$bfclarke/}}
\begin{document}

\begin{abstract}
  Given a fixed closed manifold $M$, we exhibit an explicit formula
  for the distance function of the canonical $L^2$ Riemannian metric
  on the manifold of all smooth Riemannian metrics on $M$.
  Additionally, we examine the (metric) completion of the manifold of
  metrics with respect to the $L^2$ metric and show that there exists
  a unique minimal path between any two points.  This path is also
  given explicitly.  As an application of these formulas, we show that
  the metric completion of the manifold of metrics is a CAT(0) space.
\end{abstract}

\maketitle{}

\section{Introduction}
\label{sec:introduction}

In this paper, we give explicit formulas for the distance between
Riemannian metrics, as measured by the canonical $L^2$ Riemannian
metric on the manifold of all metrics $\M$ over a given closed
manifold $M$.  We also show that geodesics are unique and, at least on
the metric completion of the manifold of metrics, there exists a
geodesic---also explicitly given---connecting any two given points.
We then apply these formulas to give the main result of this paper
(Theorem \ref{thm:14}): that the completion of the manifold of metrics
is nonpositively curved in a metric sense.

\begin{theorem*}
  The metric completion of $\M$ with respect to its $L^2$ Riemannian
  metric, $\overline{(\M, d)}$, is a CAT(0) space.
        \end{theorem*}

Fix any closed manifold $M$ of dimension $n$, and consider the space
$\M$ of all $C^\infty$-smooth Riemannian metrics on $M$.  This space
carries a canonical weak Riemannian metric known as the $L^2$ metric
(defined in Sect.~\ref{sec:prel-manif-metr}).  The $L^2$ metric has
many interesting local and infinitesimal properties.  This local
geometry is well understood due to the work of Freed--Groisser
\cite{freed89:_basic_geomet_of_manif_of} and Gil-Medrano--Michor
\cite{gil-medrano91:_rieman_manif_of_all_rieman_metric}, who have
shown that its sectional curvature is nonpositive, and that the
geodesic equation on $\M$ is explicitly solvable.  The $L^2$ metric
has also found numerous applications, for example in the study of
moduli spaces.  Ebin \cite{ebin70:_manif_of_rieman_metric} used it to
construct a slice for the action of the diffeomorphism group on $\M$
(which thus serves a local model for the moduli space of Riemannian
metrics).  Fischer and Tromba \cite{tromba-teichmueller} have used the
$L^2$ metric to study Teichmüller spaces of Riemann surfaces, where it
naturally gives rise to the well-known Weil-Petersson metric.  Indeed,
the results of this paper stand in clear analogy with similar results
on the Weil--Petersson metric, which is geodesically convex
\cite{Wolpert:1987wv} and CAT(0) \cite{Yamada:2004wz}.

In our own work \cite{clarke:_metric_geomet_of_manif_of_rieman_metric,
  clarke:_compl_of_manif_of_rieman_metric, Clarke:2010ur},
we have focused on the global geometry of the $L^2$ metric on $\M$ and
submanifolds, studying the distance it induces between Riemannian
metrics on $M$.  This approach, in joint work with Rubinstein
\cite{clarke11:_ricci_kahler}, has led to the formulation of criteria for the
existence of Kähler--Einstein metrics and for the convergence of the
Kähler--Ricci flow on Fano manifolds.  In this paper, we deepen the
understanding of this global approach with the above-mentioned
explicit formulas for the $L^2$ distance and geodesics, as well as by
establishing the CAT(0) property for the metric completion.  (Whenever
we refer to a completion in this paper, we mean the metric
completion.)  Roughly speaking, a metric space is CAT(0) if (i)
geodesics (i.e., distance-minimizing paths) exist between any two points,
and (ii) every geodesic triangle is ``thinner'' than a triangle in the
Euclidean plane with the same side lengths, in the sense that the
edges of the triangle are closer together than in Euclidean space.
(Cf.~\cite{Bridson:1999tu}; precise definitions are also given in \S
\ref{sec:background-notation}.)  A metric space is called
nonpositively curved if it is locally CAT(0).

At this point, it is important to note that even though, as mentioned
above, the sectional curvature of the $L^2$ metric is nonpositive,
this infinitesimal result does not show that the manifold of metrics
is a nonpositively curved metric space, as it would in the
finite-dimensional setting.  Indeed, as a weak Riemannian metric on an
infinite-dimensional manifold, many theorems from finite-dimensional
Riemannian geometry fail to hold.  For instance, given any point $g_0
\in \M$, there exist other points at arbitrarily close distance to
$g_0$ that are not in the image of the exponential mapping at $g_0$.
(This last point is directly implied by Theorem \ref{thm:3} in this
paper, though it is also easy to see from the work of
Gil-Medrano--Michor
\cite[Rmk.~3.5]{gil-medrano91:_rieman_manif_of_all_rieman_metric}.)
In particular, geodesics do not necessarily exist between points in
$\M$, even locally (i.e., in a small metric ball), and so the first
criterion for $\M$ to be a nonpositively curved metric space fails.
Thus, it is important to consider the metric completion of $\M$, where
geodesics between any two points do indeed exist---note that for any
point of $\M$, there exist arbitrarily close metrics for which the
geodesic connecting the two runs through degenerate metrics, i.e.,
points not in $\M$.  (Nevertheless, as a corollary of the CAT(0)
property for the completion, we do have that geodesic triangles in
$\M$ satisfy the CAT(0) inequality, cf.~Theorem \ref{thm:14}.)

The CAT(0) property has important implications, for instance, on the
existence of generalized harmonic maps
\cite{Jost:1994ed,Jost:1995es,jost96:_gener,Jost:1997cy,jost97:_nonpos,korevaar93:_sobol,korevaar97:_global}
and on actions of groups of isometries \cite{Gelander:2008dk}.  We
plan to explore these in future work.

We wish to also briefly mention the relevance of the global approach
to the geometry of the $L^2$ metric to questions related to the
convergence of Riemannian manifolds.  In fact, Anderson
\cite{anderson92:_l_einst} has used the $L^2$ metric to study spaces
of Einstein metrics.  (He refers to the distance function of the $L^2$
metric on $\M$ as the extrinsic $L^2$ metric, because he considers the
distance obtained by infima of lengths of paths that are allowed to
travel through $\M$, as opposed to restricting paths to the submanifold
of Einstein metrics.)  One appeal of the $L^2$ metric in this context
is that it provides a very weak notion of convergence.  Another appeal
is that we have previously shown that convergence in the $L^2$ metric
implies a strong convergence of the induced measures
\cite[Cor.~4.11]{Clarke:2010ur}---which hints that it could be suited
for studying convergence of metric measure spaces.  Unfortunately,
when considered on the full space $\M$, convergence in the $L^2$
metric is perhaps \emph{too} weak---it does not imply any more
synthetic-geometric notion of convergence, such as Gromov--Hausdorff
convergence.  (For proofs and a more detailed discussion of these
facts, we refer to \cite[Sect.~4.3]{Clarke:2010ur}.)
However, as Anderson's work showed, restricted to spaces of Einstein
metrics, convergence in the $L^2$ metric in fact does imply
Gromov--Hausdorff convergence (or stronger).  An open question is what
other subspaces of $\M$ might have this desirable property.

The paper is organized as follows.  In Section
\ref{sec:preliminaries}, we set up the necessary preliminaries, both
on $L^2$ metrics on spaces of sections in general, as well as on the
$L^2$ metric on $\M$ in particular.  In Section \ref{sec:d-=-omega-1},
we find a simplified description for the $L^2$ distance between
metrics, which transforms the problem from finding the infimum of
lengths of paths in the infinite-dimensional space $\M$ into a
tractable finite-dimensional problem (Theorem \ref{thm:2}).  In
Section \ref{sec:geod-exist-uniq}, we show that there exists a unique
geodesic connecting any two given metrics in the completion of $\M$.
We also write down an explicit formula for this geodesic, which in
turn allows us to make the formula for the $L^2$ distance between
metrics explicitly computable (Theorem \ref{thm:3}).  Again, Theorem
\ref{thm:2} turns this infinite-dimensional problem into a
finite-dimensional one.  In Section \ref{sec:cat0-property}, we use
the formulas for geodesics and distance obtained in the previous
sections to show the CAT(0) property for the metric completion of
$\M$, again by extrapolation from a finite-dimensional problem.
Finally, in Section \ref{sec:outlook} we outline some open problems
regarding the $L^2$ metric that we find to be of interest.

\subsection*{Acknowledgements}
\label{sec:acknowledgements}

I would like to thank Jacob Bernstein, Guy Buss, and Michael Ka\-po\-vich
for helpful discussions during the preparation of this manuscript, as
well as Yanir A.~Rubinstein for comments on an earlier version.

\section{Preliminaries}
\label{sec:preliminaries}

\subsection{$L^2$ metrics}
\label{sec:l2-metrics}

In this subsection dealing with general $L^2$ metrics on spaces of
sections, we follow the definitions and results of Freed--Groisser
\cite[Appendix]{freed89:_basic_geomet_of_manif_of}.

Let $M$ be a smooth, closed manifold.
Let $\pi: E \rightarrow M$ be a smooth fiber bundle, and denote the
fiber over $x \in M$ by $E_x$.  Suppose we are given
\begin{enumerate}
\item a smooth volume form $\mu$ on $M$ with $\Vol(M, \mu) = 1$, and
\item a smooth Riemannian metric $g$ defined on vectors in the
  vertical tangent bundle $T^v E$.
\end{enumerate}
Let $\mathcal{E}$ denote a space of sections of $E$, where we allow the
possibilities
\begin{enumerate}
\item \label{item:6} $\mathcal{E} = \Gamma^s(E)$, the space of Sobolev
  sections $M \rightarrow E$ with $L^2$-integrable weak derivatives up
  to order $s$.  Here we require $s > n / 2$ if $E \rightarrow M$ is
  not a vector bundle.
\item \label{item:7} $\mathcal{E} = \Gamma(E)$, the space of smooth sections $M
  \rightarrow E$.
\end{enumerate}

By standard results on mapping spaces, $\E$ is a manifold in either of
these cases \cite{palais68:_found_of_global_non_linear_analy},
\cite[Example 4.1.2]{hamilton82:_inver_funct_theor_of_nash_and_moser}.
(In case \eqref{item:6}, it is a separable Hilbert manifold, and in case
\eqref{item:7}, it is a Fréchet manifold.)  With this data, we can
define an $L^2$-type Riemannian metric on $\mathcal{E}$ as follows.
The tangent space at $\sigma \in \E$ is identified with the space of
vertical vector fields ``along $\sigma$'', that is, with the space of
sections of the pulled-back bundle $\sigma^* T^v E$.  Now, for $X, Y
\in T_\sigma {\cal E}$, define the $L^2$ metric by
\begin{equation*}
  (X,Y)_\sigma := \integral{M}{}{g(\sigma(x))(X(x), Y(x))}{d \mu(x)}.
\end{equation*}
We denote by $d$ the distance function induced by $(\cdot, \cdot)$ on
${\cal E}$, and by $d_x$ the distance function induced by $g$ on
$E_x$.  Then $d$ is a pseudometric and $d_x$ is a metric (in the sense
of metric spaces).  Note that $(\cdot, \cdot)$ is in general a weak
Riemannian metric on $\E$, that is, for any $\sigma$, the topology
induced by $(\cdot, \cdot)_\sigma$ on $T_\sigma \E$ is weaker than the
manifold topology.  In this case, it is in principle possible that $d$
is not a metric in that it could fail to separate points.  There are
known examples of this due to the work of Michor--Mumford
\cite{michor06:_rieman_geomet_spaces_of_plane_curves,michor05:_vanis_geodes_distan_spaces_of},
where weak Riemannian metrics are constructed for which the induced
distance between \emph{any} two points is always zero.  However,
Theorem \ref{thm:1} below will show that $L^2$ metrics as we have
defined them do not suffer from this pathology.

Thinking of $E$ as a bundle of metric spaces $\cup_{x \in M} (E_x,
d_x)$ over $M$, we can define an $L^p$ metric $\Omega_p$ on $\E$ by
\begin{equation}
  \label{eq:25}
  \Omega_p(\sigma, \tau) := \left( \integral{M}{}{d_x(\sigma(x),
    \tau(x))^p}{d \mu(x)} \right)^{1/p}.
\end{equation}
Note that $\Omega_p$ is indeed a metric (in the sense of metric
spaces) on $\E$.  All the required properties are easily implied from
those of $d_x$.  For example, $\Omega_p$ is positive definite because,
as a Riemannian metric on a finite-dimensional manifold, $d_x$ is
positive definite for each $x$, and two unequal elements $\sigma, \tau
\in \mathcal{E}$ necessarily differ over a set of positive
$\mu$-measure.  Only the triangle inequality is not immediately
obvious---but this inequality follows, as in the case of an $L^p$
norm, from the triangle inequality for $d_x$ and Hölder's inequality.

The following theorem gives a positive lower bound for the distance,
with respect to $d$, between distinct elements of $\E$.  In the proof,
and throughout the rest of the paper, a prime will denote the partial
derivative in the variable $t$.

\begin{theorem}\label{thm:1}
  The following inequality holds for any path $\sigma_t$, $t \in [0,1]$,
  in ${\cal E}$:
  \begin{equation}\label{eq:18}
    L_{(\cdot, \cdot)}(\sigma_t)^2 \geq \integral{M}{}{L_{g}(\sigma_t(x))^2}{d \mu},
  \end{equation}
  where on the left-hand side, we measure the length in $\E$ with
  respect to $(\cdot, \cdot)$, while on the right-hand side, we
  measure the length in $E_x$ with respect to $g$.  In particular,
                for any $\sigma, \tau \in {\cal E}$, we have $d(\sigma, \tau) \geq
  \Omega_2(\sigma, \tau)$, and so $d$ is a metric on $\E$.
    \end{theorem}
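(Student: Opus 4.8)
The plan is to reduce the statement to a pointwise comparison of length elements and then integrate. Fix a smooth path $\sigma_t$, $t \in [0,1]$, in $\E$. For each $x \in M$, the curve $t \mapsto \sigma_t(x)$ is a path in the finite-dimensional Riemannian manifold $(E_x, g(\sigma(x)))$, so its length $L_g(\sigma_t(x))$ is well-defined, and by definition of the distance function $d_x$ on $E_x$ we have $L_g(\sigma_t(x)) \geq d_x(\sigma_0(x), \sigma_1(x))$. The key point is to relate $L_{(\cdot,\cdot)}(\sigma_t)$ to the pointwise lengths $L_g(\sigma_t(x))$.

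First I would write out the definitions:
\begin{equation*}
  L_{(\cdot, \cdot)}(\sigma_t) = \integral{0}{1}{\|\sigma_t'\|_{\sigma_t}}{dt}
  = \integral{0}{1}{\left( \integral{M}{}{g(\sigma_t(x))(\sigma_t'(x), \sigma_t'(x))}{d\mu(x)} \right)^{1/2}}{dt},
\end{equation*}
while $L_g(\sigma_t(x)) = \integral{0}{1}{g(\sigma_t(x))(\sigma_t'(x), \sigma_t'(x))^{1/2}}{dt}$ for each fixed $x$. So \eqref{eq:18} asserts that the square of the time-integral of the spatial $L^2$-norm dominates the spatial integral of the squares of the time-integrals. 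This is precisely a Minkowski-type integral inequality: the $L^2(M, d\mu)$ norm of the function $x \mapsto \integral{0}{1}{h(t,x)}{dt}$ is at most $\integral{0}{1}{\|h(t, \cdot)\|_{L^2(M,d\mu)}}{dt}$, applied to $h(t,x) = g(\sigma_t(x))(\sigma_t'(x), \sigma_t'(x))^{1/2} \geq 0$. Squaring both sides of Minkowski's integral inequality gives exactly \eqref{eq:18}; I would either cite Minkowski's integral inequality directly or, since the paper's style favors self-containment, prove it in two lines by expanding the square on the left, using Tonelli to exchange the $t$-integrals with the $x$-integral, and applying Cauchy--Schwarz in $L^2(M,d\mu)$ to the integrand $\integral{M}{}{h(t,x) h(s,x)}{d\mu(x)} \leq \|h(t,\cdot)\|_2 \|h(s,\cdot)\|_2$.

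Once \eqref{eq:18} is established, the ``in particular'' statement follows by taking the infimum over paths: for any path $\sigma_t$ from $\sigma$ to $\tau$, the right-hand side of \eqref{eq:18} is at least $\integral{M}{}{d_x(\sigma(x), \tau(x))^2}{d\mu} = \Omega_2(\sigma, \tau)^2$ by the definition of $d_x$ and monotonicity of the integral, so $L_{(\cdot,\cdot)}(\sigma_t) \geq \Omega_2(\sigma, \tau)$, and taking the infimum gives $d(\sigma, \tau) \geq \Omega_2(\sigma, \tau)$. Since $\Omega_2$ is already known to be a genuine metric (in particular positive definite), $d$ separates points and hence is a metric as well. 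The main technical obstacle I anticipate is not the inequality itself—which is soft—but the measurability and integrability bookkeeping: one must check that $x \mapsto L_g(\sigma_t(x))$ is measurable and that the Tonelli/Fubini exchanges are justified, which should follow from smoothness of $\sigma_t$ in both variables and compactness of $M$, but deserves a careful word. A secondary subtlety is that in the Sobolev case \eqref{item:6} the path need only have the regularity guaranteed by the Hilbert manifold structure rather than being smooth, so I would make sure the argument only uses absolute continuity in $t$ and square-integrability in $x$, which is all that is needed for the length functionals and the integral inequality to make sense.
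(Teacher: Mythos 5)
Your proposal is correct, and it reaches \eqref{eq:18} by a slightly different route than the paper. The paper's proof first reparametrizes $\sigma_t$ proportionally to $(\cdot,\cdot)$-arc length so that $L_{(\cdot,\cdot)}(\sigma_t)^2 = E_{(\cdot,\cdot)}(\sigma_t)$, then exchanges the $t$- and $x$-integrations by Fubini, and finally applies the pointwise estimate $E_g(\sigma_t(x)) \geq L_g(\sigma_t(x))^2$ (Hölder in $t$) for each $x$; you instead apply Minkowski's integral inequality directly to the speed function $h(t,x) = g(\sigma_t(x))(\sigma_t'(x),\sigma_t'(x))^{1/2}$, obtaining $\bigl(\int_M L_g(\sigma_t(x))^2 \, d\mu\bigr)^{1/2} \leq \int_0^1 \| h(t,\cdot)\|_{L^2(M,\mu)}\, dt = L_{(\cdot,\cdot)}(\sigma_t)$ and squaring. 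The two arguments are close cousins---Minkowski's integral inequality is itself proved by exactly the Fubini-plus-Cauchy--Schwarz manipulation you sketch---so the difference is largely one of packaging, but your version has the small advantage of dispensing with the reparametrization step (the paper's ``without loss of generality'' requires the usual care for paths whose speed vanishes or which are only piecewise differentiable, and for proving that reparametrization preserves length), and it applies verbatim to any path with the minimal regularity you note at the end. What the paper's formulation buys in exchange is self-containment (only Fubini and the elementary $E \geq L^2$ bound, no named inequality) and an energy-functional phrasing that matches standard Riemannian arguments. Your handling of the ``in particular'' statement---bounding $L_g(\sigma_t(x))$ below by $d_x(\sigma(x),\tau(x))$, integrating, taking the infimum over paths, and using positive definiteness of $\Omega_2$ to conclude $d$ separates points---is exactly what the paper does, and your measurability caveats are appropriate but not an obstruction.
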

\begin{proof}
  Without loss of generality, suppose that $\sigma_t$ is parametrized
  proportionally to $(\cdot, \cdot)$-arc length.  In this case, we
  have $L_{(\cdot, \cdot)}(\sigma_t)^2 = E_{(\cdot,
    \cdot)}(\sigma_t)$, where $E_{(\cdot, \cdot)}$ denotes the energy
  of the path with respect to $(\cdot, \cdot)$.  On the other hand, we have
  \begin{align*}
    E_{(\cdot, \cdot)}(\sigma_t) &=
    \integral{0}{1}{\integral{M}{}{g(\sigma_t(x))(\sigma'_t(x),
        \sigma'_t(x))}{d \mu}}{d t} =
    \integral{M}{}{\integral{0}{1}{g(\sigma_t(x))(\sigma'_t(x),
        \sigma'_t(x))}{d t}}{d \mu} \\
    &= \integral{M}{}{E_{g}(\sigma_t(x))}{d \mu} \geq
    \integral{M}{}{L_{g}(\sigma_t(x))^2}{d \mu}.
  \end{align*}
  Here, we have used Fubini's theorem followed by a well-known
  application of Hölder's inequality which gives $E_g(\sigma_t(x))
  \geq L_g(\sigma_t(x))^2$ for any $x \in M$.  As above, $E_g$ denotes
  the energy of the path with respect to $g$.  This proves
  \eqref{eq:18}, from which $d(\sigma, \tau) \geq \Omega_2(\sigma,
  \tau)$ follows directly.
      \end{proof}

Now that we have set up the situation for a general $L^2$ metric, we
turn to the main focus of this paper, when $\E$ is the space of smooth
Riemannian metrics.

\subsection{Preliminaries on the manifold of metrics}
\label{sec:prel-manif-metr}

For any point $x$ in our closed base manifold $M$, let $\satx := S^2
T^*_x M$ denote the vector space of symmetric $(0,2)$-tensors based at
$x$, and let $\s := \Gamma(S^2 T^* M)$ denote the space of smooth,
symmetric $(0,2)$-tensor fields.  Similarly, denote by $\Matx := S^2_+
T^*_x M$ the vector space of positive-definite, symmetric
$(0,2)$-tensors at $x$, and by $\M := \Gamma(S^2_+ T^* M)$ the space
of smooth sections of this bundle.  Thus, $\M$ is the space of smooth
Riemannian metrics on $M$.  In the notation of the previous section,
we have $E = S^2_+ T^* M$, $E_x = \Matx$, and $\E = \M$.  Thus we see
that $\M$ is a Fréchet manifold, and since $\M$ is an open subset of
$\s$, we have a canonical identification of the tangent space $T_g \M$
with $\s$ for any $g \in \M$.  (Similarly, the tangent space to
$\Matx$ at any $a \in \Matx$ is identified with $\satx$; thus we have
$T^v_a (S^2_+ T^* M) \cong \satx$.)

Any element $\tilde{g} \in \M$ gives rise to a natural scalar product
on $T_{\tilde{g}} \M \cong \s$ as follows.  For $h, k \in \s$, the
canonical scalar product that $\tilde{g}$ induces on $(0,2)$-tensors
is
\begin{equation*}
  \tr_{\tilde{g}}(hk) = \tr(\tilde{g}^{-1} h \tilde{g}^{-1} k) =
  \tilde{g}^{ij} h_{il} \tilde{g}^{lm} k_{jm},
\end{equation*}
where by expressions like $\tilde{g}^{-1} h$ we of course mean the
$(1,1)$-tensor obtained by raising an index of $h$ using $\tilde{g}$.
Then $\tr_{\tilde{g}}(hk)$ is a function on $M$, and by integrating it
with respect to the volume form $\mu_{\tilde{g}}$ of $\tilde{g}$, we
get a scalar product
\begin{equation}\label{eq:26}
  (h, k)_{\tilde{g}} := \integral{M}{}{\tr_{\tilde{g}}(hk)}{d \mu_{\tilde{g}}}.
\end{equation}

This $L^2$ scalar product fits into the framework of the last
subsection as follows.  For the rest of the paper, we fix some
arbitrary reference metric $g \in \M$ that has total volume $\Vol(M,
g) = 1$.  Given a tensor field $h \in \s$ or a tensor $b \in \satx$,
denote by the capital letter the $(1,1)$-tensor obtained by raising an
index using $g$, i.e., $H = g^{-1} h$ and $B = g(x)^{-1} b$.  For each
$x \in M$ and $a \in \Matx$, define a scalar product on $T_a \Matx$
(vertical vectors) by
\begin{equation*}
  \langle b, c \rangle_a := \tr_a(bc) \sqrt{\det A},
\end{equation*}
where $b, c \in T_a \Matx$.  Thus, $\langle \cdot, \cdot \rangle$
gives a Riemannian metric on $\Matx$.  For the remainder of the paper,
we denote by $\mu := \mu_g$ the volume form of $g$.  Then the scalar
product \eqref{eq:26} is given by the $L^2$ metric (in the sense of
the last section)
\begin{equation*}
  (h, k)_{\tilde{g}} = \integral{M}{}{\langle h(x), k(x)
    \rangle_{\tilde{g}(x)}}{d \mu}.
\end{equation*}
As in the last subsection, we denote by $d$ and $d_x$ the distance
functions of $(\cdot, \cdot)$ and $\langle \cdot, \cdot \rangle$,
respectively. By Theorem \ref{thm:1}, it is immediate that $d$ is a
metric on $\M$, a fact that we already proved in a less elegant way in
\cite[Thm.~18]{clarke:_metric_geomet_of_manif_of_rieman_metric}.

For $\tilde{g} \in \M$ and $a \in \Matx$, we will denote the norms
associated with $(\cdot, \cdot)_{\tilde{g}}$ and $\langle \cdot, \cdot
\rangle_a$ by $\normdot_{\tilde{g}}$ and $\absdot_a$, respectively,
throughout the remainder of the paper.

In \cite{clarke:_compl_of_manif_of_rieman_metric}, we determined the
completion of $(\M, d)$, which we will denote in the following by
$\overline{\M}$.  We will summarize the relevant details of this here.

Let $\tilde{g} : M \rightarrow S^2 T^* M$ be any measurable section
that induces a positive \emph{semi}definite scalar product on each
tangent space of $M$.  We call such a section a \emph{measurable
  semimetric}.  A measurable semimetric induces a measurable volume
form (and hence a measure) on $M$ using the usual formula
$\mu_{\tilde{g}} := \sqrt{\det \tilde{g}} \, dx^1 \wedge \cdots \wedge
dx^n$ in local coordinates.  We denote by $\Mf$ the set of all
measurable semimetrics on $M$ that have finite volume, i.e., with
$\integral{M}{}{}{d \mu_{\tilde{g}}} < \infty$.  We also introduce an
equivalence relation on $\Mf$ by saying $g_0 \sim g_1$ if and only if
the following statement holds almost surely (with respect to the
measure $\mu$) on $M$: $g_0(x)$ fails to be positive definite if and
only if $g_1(x)$ fails to be positive definite.  We then have the
following theorem.

\begin{theorem}[{\cite[Thm.~5.17]{clarke:_compl_of_manif_of_rieman_metric}}]\label{thm:5}
  There is a natural bijection between $\overline{\M}$ and $\Mfhat :=
  \Mf / {\sim}$.
\end{theorem}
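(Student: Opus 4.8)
The plan is to prove Theorem~\ref{thm:5} by constructing the bijection explicitly and verifying it is well-defined, injective, and surjective. The strategy rests on two complementary principles: the lower bound $d \geq \Omega_2$ from Theorem~\ref{thm:1}, which controls how Cauchy sequences in $(\M, d)$ behave pointwise, and a matching upper bound (which must be established, likely by an explicit path construction) showing that $\Omega_2$ and $d$ are in fact ``close enough'' that $\Omega_2$-convergence can be promoted to $d$-convergence after passing to a subsequence. The point of $\Omega_2$ is that it is an honest $L^p$-type metric on a space of measurable sections, so its completion is essentially an $L^2$-space of measurable semimetrics, which is a classical object.

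\medskip

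First I would define the map $\Phi \colon \overline{\M} \to \Mfhat$. Given a $d$-Cauchy sequence $(g_k)$ in $\M$, Theorem~\ref{thm:1} gives $\Omega_2(g_j, g_k) \leq d(g_j, g_k)$, so $(g_k)$ is $\Omega_2$-Cauchy. Since $\Omega_2(g_j, g_k)^2 = \int_M d_x(g_j(x), g_k(x))^2 \, d\mu$, passing to a subsequence yields pointwise $\mu$-a.e.\ convergence of $g_k(x)$ in the completion of $(\Matx, d_x)$. One then needs to identify the completion of the finite-dimensional Riemannian manifold $(\Matx, d_x)$: since $\langle b, c\rangle_a = \tr_a(bc)\sqrt{\det A}$ degenerates exactly as $\det a \to 0$, its completion should be naturally identified with the positive \emph{semi}definite symmetric forms of finite ``volume'' $\sqrt{\det a}$, i.e.\ the fiberwise version of $\Mf$. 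This is a concrete computation with $2\times 2$ or general symmetric matrices and the Freed--Groisser/Gil-Medrano--Michor explicit geodesics on $\Matx$. The resulting a.e.-defined limit is a measurable semimetric of finite volume (finiteness coming from the uniform $L^2$-bound), so it represents a class in $\Mfhat$. One checks $\Phi$ is independent of the choice of Cauchy sequence and subsequence, using that two $d$-equivalent sequences are $\Omega_2$-equivalent, hence have the same a.e.\ pointwise limit, hence the same class in $\Mfhat$.

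\medskip

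Next I would prove injectivity and surjectivity of $\Phi$. Injectivity amounts to: if two $d$-Cauchy sequences have $\sim$-equivalent a.e.\ limits, their $d$-distance tends to zero. The heart of the matter is surjectivity: given an arbitrary $[\tilde{g}] \in \Mfhat$, we must produce a $d$-Cauchy sequence of \emph{smooth} metrics converging to it. The construction is to approximate $\tilde{g}$ in a suitable sense by smooth metrics---e.g.\ mollify and add a small multiple of $g$ to restore positive definiteness where $\tilde{g}$ degenerates, then show the approximating sequence is $d$-Cauchy. This requires the upper bound alluded to above: one needs an estimate of the form $d(g_0, g_1) \leq C \cdot (\text{something controlled by } \Omega_2 \text{ and the volumes})$, at least for metrics that are uniformly comparable, so that smallness in the $L^2$/volume sense forces smallness in $d$. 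Such an estimate is plausible because along the explicit fiberwise geodesics the $d_x$-length is comparable to $\Omega_2$'s integrand when the endpoints are bounded away from zero and infinity.

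\medskip

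\textbf{The main obstacle} will be the surjectivity step, and specifically the two-sided comparison between $d$ and the data $(\Omega_2, \mu_{\tilde{g}})$. Theorem~\ref{thm:1} is one-sided; an upper bound for $d$ is genuinely harder because $d$ is the infimum of lengths of paths through the \emph{infinite-dimensional} $\M$, and the naive ``connect pointwise by fiber geodesics'' path need not be smooth or even lie in $\M$. I expect the resolution to be: approximate the target semimetric by smooth metrics that are uniformly positive and bounded, for which one can bound $d$ from above by an explicit smooth path (interpolating along fiber geodesics, suitably regularized), and then handle the degenerate set by a separate limiting argument exploiting that its contribution to every relevant quantity is small in $L^2$ and in volume. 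Controlling the interaction between the mollification scale and the size of the ``almost-degenerate'' region---so that the approximants stay $d$-Cauchy---is where the real work lies. (Since the excerpt cites \cite[Thm.~5.17]{clarke:_compl_of_manif_of_rieman_metric} for this result, the paper presumably only sketches this and refers to that reference for the full argument.)
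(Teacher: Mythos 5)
You should first note that the paper does not actually prove Theorem \ref{thm:5} here: it is quoted from the earlier completion paper, and the present text only records the mechanism of the bijection through $\omega$-convergence (Definition \ref{dfn:6}) and the three facts (i)--(iii) following it. Your overall architecture --- use $d \geq \Omega_2$ from Theorem \ref{thm:1} to extract a subsequence with pointwise $\mu$-a.e.\ limits in the fiber completions, build a class in $\Mfhat$ from that limit, and then prove well-definedness, injectivity and surjectivity with the help of a complementary upper bound on $d$ --- is consistent in spirit with that mechanism, and your surjectivity sketch (mollify, add a small multiple of $g$, control $d$ by volume-type bounds such as Proposition \ref{prop:3}) is the right kind of construction.

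However, there is a concrete error at the center of your construction: the completion of the fiber $(\Matx, d_x)$ is \emph{not} ``the positive semidefinite forms,'' i.e.\ not a fiberwise version of $\Mf$. By Proposition \ref{prop:1} of this paper it is $\cl(\Matx)/\partial\Matx$: since $d_x(a,[0]) = \tfrac{4}{\sqrt{n}}\sqrt[4]{\det A}$, every degenerate tensor lies at $d_x$-distance zero from every other, and the entire boundary collapses to the single cone point $[0]$. Consequently the pointwise a.e.\ limits of an $\Omega_2$-Cauchy sequence determine the limiting tensor only off the degeneration set, and on the degeneration set they determine nothing beyond the set itself --- which is exactly why the target of the bijection is the quotient $\Mfhat = \Mf/{\sim}$ and why Definition \ref{dfn:6} compares $X_{g_0}$ with $D_{\{g_k\}}$ instead of asking for convergence of values there. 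As written, your fiberwise identification would point toward a bijection with semimetrics themselves and obscures why $\sim$ must appear at all. Relatedly, the two-sided comparison you defer to the surjectivity step is already needed for injectivity (equivalent limits $\Rightarrow$ $d$-distance tends to zero), and pointwise a.e.\ convergence alone does not suffice without additional uniform control: this is precisely why the $\omega$-convergence definition imposes the summability condition $\sum_k d(g_k,g_{k+1})<\infty$ and why the cited proof leans on volume estimates of the type in Lemma \ref{lem:10} and Proposition \ref{prop:3}. (Note also that $d=\Omega_2$, Theorem \ref{thm:2}, is a result of the present paper and was not available to the original argument, so the comparison between $d$ and pointwise data there is more delicate than a promotion of $\Omega_2$-convergence.) The skeleton of your plan is reasonable, but the fiber-completion identification must be corrected and the upper-bound machinery actually supplied before the argument closes.
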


In the following, we will make use of some consequences of this
theorem that we have worked out in previous papers.  Though it will
not play a direct role here, for completeness we describe the
bijection mentioned in Theorem \ref{thm:5}.  This requires a
definition.

\begin{definition}\label{dfn:6}
  Let $\{g_k\}$ be a sequence in $\M$, and let $[g_0] \in \Mfhat$.
  Define
  \begin{equation*}
    X_{g_0} := \setdef{x \in M}{\mu_{g_0}(x) = 0} \quad
    \textnormal{and} \quad D_{\seq{g_k}} := \setdef{x \in M}{\lim_{k
        \rightarrow \infty} \mu_{g_k} = 0}.
  \end{equation*}
  We say that $\{g_k\}$ \emph{$\omega$-converges} to $[g_0]$ if for
  every representative $g_0 \in [g_0]$, the following holds:
  \begin{enumerate}
  \item $\{g_k\}$ is $d$-Cauchy,
  \item $X_{g_0}$ and $D_{\{g_k\}}$ differ at most by a $\mu$-nullset,
  \item $g_k(x) \rightarrow g_0(x)$ for $\mu$-a.e.~$x \in M \setminus
    D_{\{g_k\}}$, and
  \item $\sum_{k=1}^\infty d(g_k, g_{k+1}) < \infty$.
  \end{enumerate}
  We call $[g_0]$ the \emph{$\omega$-limit} of the sequence $\{g_k\}$.
\end{definition}

The bijection of Theorem \ref{thm:5} is given by showing that: (i) For
any Cauchy sequence $\seq{g_k} \subset \M$, there exists an
$\omega$-convergent subsequence; (ii) Two $\omega$-convergent
subsequences $\seq{g^0_k}$ and $\seq{g^1_k}$ have the same
$\omega$-limit if and only if they represent the same point in
$\overline{\M}$, i.e., if and only if $\lim_{k \rightarrow \infty}
d(g^0_k, g^1_k) = 0$; and (iii) For each element $[g_0] \in \Mfhat$,
there exists a sequence in $\M$ $\omega$-converging to $[g_0]$.

At this point, we would like to point out that we will retain the
notation $d$ for the metric induced on the completion $\overline{\M}$
from $(\M, d)$.  It will also be convenient to use the bijection of
Theorem \ref{thm:5} to see $d$ as a metric on $\Mfhat$, and as a
\emph{pseudo}metric on $\Mf$.  Of course, for $g_0, g_1 \in \Mf$, we
have $g_0 \sim g_1$ if and only if $d(g_0, g_1) = 0$.

In what follows, we will also be concerned with special subsets of
$\M$ that have convenient properties.  They are essentially subsets
that are, in a pointwise sense, uniformly bounded away from infinity
and the boundary of $\M$.

\begin{definition}\label{dfn:1}
  For $\tilde{g} \in \M$ and $x \in M$, let
  $\lambda^{\tilde{G}}_{\min}(x)$ denote the minimal eigenvalue of
  $\tilde{G}(x) = g(x)^{-1} \tilde{g}(x)$.  A subset $\U \subset \M$
  is called \emph{amenable} if it is of the form
  \begin{equation}\label{eq:24}
    \U = \{ \tilde{g} \in \M \mid \lambda^{\tilde{G}}_{\min}(x) \geq \zeta\
    \textnormal{and}\ |\tilde{g}(x)|_{g(x)} \leq C\ \textnormal{for all $\tilde{g}
      \in \U$ and $x \in M$} \}
  \end{equation}
  for some constants $C, \zeta > 0$.

  We denote the closure of $\U$ in the $L^2$ \emph{norm} $\normdot_g$
  by $\U^0$; it consists of all measurable, symmetric $(0, 2)$-tensors
  $\tilde{g}$ satisfying the bounds of \eqref{eq:24} $\mu$-a.e.
\end{definition}

\begin{remark}\label{rmk:1}
  \
  
  \begin{enumerate}
  \item \label{item:4} Note that the preceding definition differs from that in our
    previous works
    (cf.~\cite[Def.~3.1]{clarke:_compl_of_manif_of_rieman_metric},
    \cite[Def.~2.11]{Clarke:2010ur}).  The above definition
    is coordinate-independent and therefore more satisfying.
    Additionally, the results we need from those previous works are
    valid for the definition here because of the following
    equivalence:  If $\U \subset \M$ is amenable in this new sense, then there
    exist $\U', \U'' \subset \M$ that are amenable in the old sense,
    and such that $\U' \subset \U \subset \U''$.
  \item \label{item:5} If $\U \subset \M$ is amenable, then $\U^0$ is
    \emph{pointwise convex}, by which we mean the following.  Let
    $g_0, g_1 \in \U^0$, and let $\rho$ be any measurable function on
    $M$ taking values between $0$ and $1$.  Then $\rho g_0 + (1 -
    \rho) g_1 \in \U^0$.  This is straightforward to see by the
    concavity of the function mapping a matrix to its minimal
    eigenvalue, and the convexity of the norm $\absdot_{g(x)}$.
  \end{enumerate}
\end{remark}

The following lemma was originally proved in
\cite[Lem.~3.3]{clarke:_compl_of_manif_of_rieman_metric} for amenable
subsets, but the same proof (which is more or less self-evident) works
for $L^2$ closures of amenable subsets.

\begin{lemma}\label{lem:10}
  Let $\U$ be an amenable subset.  Then there exists a constant $K >
  0$ such that for all $\tilde{g} \in \U^0$,
  \begin{equation}\label{eq:130}
    \frac{1}{K} \leq \left( \frac{\mu_{\tilde{g}}}{\mu} \right) \leq K,
  \end{equation}
  where by $(\mu_{\tilde{g}} / \mu)$ we denote the unique measurable
  function on $M$ such that $\mu_{\tilde{g}} = (\mu_{\tilde{g}} / \mu)
  \mu$.
\end{lemma}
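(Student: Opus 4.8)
The plan is to prove the two-sided bound on $(\mu_{\tilde{g}}/\mu)$ by working fiberwise and exploiting the uniform eigenvalue bounds built into the definition of an amenable subset. Fix $\tilde{g} \in \U^0$ and a point $x \in M$ at which $\tilde{g}(x)$ satisfies the bounds of \eqref{eq:24} (this holds $\mu$-a.e.\ by Definition \ref{dfn:1}). Working in the basis that makes $g(x)$ the identity, the $(1,1)$-tensor $\tilde{G}(x) = g(x)^{-1}\tilde{g}(x)$ is represented by a symmetric positive-definite matrix, and the volume ratio is exactly $(\mu_{\tilde{g}}/\mu)(x) = \sqrt{\det \tilde{G}(x)} = \prod_{i=1}^n \sqrt{\lambda_i}$, where $\lambda_1 \le \cdots \le \lambda_n$ are the eigenvalues of $\tilde{G}(x)$.

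First I would bound the eigenvalues from below and above. The lower bound is immediate from amenability: $\lambda_i \ge \lambda^{\tilde{G}}_{\min}(x) \ge \zeta$ for every $i$. For the upper bound, I would use the norm bound $|\tilde{g}(x)|_{g(x)} \le C$: since $|\tilde{g}(x)|_{g(x)}^2 = \tr_{g(x)}(\tilde{g}(x)\tilde{g}(x)) = \tr(\tilde{G}(x)^2) = \sum_i \lambda_i^2$, we get $\lambda_i \le \sqrt{\sum_j \lambda_j^2} = |\tilde{g}(x)|_{g(x)} \le C$ for every $i$. Hence $\zeta^{n/2} \le \sqrt{\det \tilde{G}(x)} \le C^{n/2}$, and setting $K := \max\{C^{n/2}, \zeta^{-n/2}\}$ (assuming without loss of generality $\zeta \le 1 \le C$) gives $1/K \le (\mu_{\tilde{g}}/\mu)(x) \le K$ at $\mu$-a.e.\ $x$, which is the claimed \eqref{eq:130}.

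The remaining point is just to confirm that the bound, established pointwise $\mu$-a.e., is the correct interpretation of the statement: since $(\mu_{\tilde{g}}/\mu)$ is by definition the Radon--Nikodym density and inequalities between measurable functions are understood $\mu$-a.e., the estimate holds as stated. I should also note that $K$ depends only on the constants $C, \zeta$ appearing in \eqref{eq:24}, hence only on $\U$, not on the particular $\tilde{g} \in \U^0$, which is what "there exists a constant $K > 0$ such that for all $\tilde{g} \in \U^0$" requires.

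There is essentially no obstacle here; the only mild subtlety is bookkeeping the passage from $\U$ to its $L^2$-closure $\U^0$. But Definition \ref{dfn:1} already asserts that elements of $\U^0$ satisfy the bounds of \eqref{eq:24} $\mu$-a.e., so the fiberwise computation above applies verbatim at a.e.\ point, and no limiting argument in the $L^2$ norm is actually needed — this is precisely why, as the paper remarks, the original proof for amenable $\U$ carries over without change.
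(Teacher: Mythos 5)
Your proof is correct, and it is essentially the argument the paper has in mind: the paper only cites \cite[Lem.~3.3]{clarke:_compl_of_manif_of_rieman_metric} and calls the proof ``more or less self-evident,'' and that self-evident proof is exactly your pointwise computation $(\mu_{\tilde{g}}/\mu) = \sqrt{\det \tilde{G}}$ combined with the uniform eigenvalue bounds $\zeta \leq \lambda_i \leq C$ coming from \eqref{eq:24}, valid $\mu$-a.e.\ for elements of $\U^0$ by Definition \ref{dfn:1}. No gaps; your observation that $K$ depends only on $C$, $\zeta$, $n$ (hence only on $\U$) is precisely the uniformity the lemma requires.
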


To end this subsection, we have a somewhat unexpected and extremely
useful result that bounds the distance between two semimetrics
\emph{uniformly} based on the intrinsic volume of the subset on which
they differ.

\begin{proposition}[{\cite[Prop.~2.20]{Clarke:2010ur}}]\label{prop:3}
  Let $g_0, g_1 \in \Mf$ and $A := \carr(g_1 - g_0)$.  Then
  \begin{equation*}
    d(g_0, g_1) \leq C(n)
    \left(
      \sqrt{\Vol(A, g_0)} + \sqrt{\Vol(A, g_1)}
    \right),
  \end{equation*}
  where $C(n)$ is a constant depending only on $n = \dim M$.
\end{proposition}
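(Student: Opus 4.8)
The plan is to reduce the estimate to a one-parameter family of metrics on $M$ and to the pointwise geometry of the model spaces $(\Matx, d_x)$, then to patch via Proposition~\ref{prop:3}. First I would reduce to the case where $g_0, g_1 \in \M$ are smooth: a general pair in $\Mf$ can be approximated by smooth metrics via the $\omega$-convergence machinery of Definition~\ref{dfn:6}, and both sides of the claimed inequality behave continuously under such approximation (the volume terms $\Vol(A, g_i)$ because $\mu_{g_i}$ converges in the appropriate sense, and $d(g_0, g_1)$ by the triangle inequality). So assume $g_0, g_1$ are smooth metrics with $A = \carr(g_1 - g_0)$.

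The central construction is to interpolate \emph{only over $A$}. Define the path $g_t := (1-t) g_0 + t g_1$ for $t \in [0,1]$. Since $g_0$ and $g_1$ agree outside $A$, the tensor $g_1 - g_0$ is supported in $A$, so $g_t$ also agrees with $g_0$ outside $A$, and by pointwise convexity of the positive cone $g_t$ is a genuine (semi-)metric. Then
\begin{equation*}
  d(g_0, g_1) \leq L_{(\cdot,\cdot)}(g_t) = \integral{0}{1}{\left(\integral{A}{}{\langle g_1 - g_0, g_1 - g_0\rangle_{g_t(x)}}{d\mu}\right)^{1/2}}{dt},
\end{equation*}
where the integral over $M$ collapses to an integral over $A$ because $g'_t = g_1 - g_0$ vanishes off $A$. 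The point is now that this length is controlled purely by what happens on $A$, so I would invoke the pointwise picture: for each $x \in A$, the straight-line segment $t \mapsto g_t(x)$ in $(\Matx, d_x)$ has $d_x$-length bounded by $d_x(g_0(x), g_1(x))$ up to the explicit geometry of the model metric $\langle\cdot,\cdot\rangle_a = \tr_a(\cdot\,\cdot)\sqrt{\det A}$, and one then applies Hölder in the $x$-variable. This yields $d(g_0, g_1) \leq \Omega_1$-type quantity over $A$, which in turn is bounded by $C(n)(\sqrt{\Vol(A,g_0)} + \sqrt{\Vol(A,g_1)})$.

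The heart of the matter — and the main obstacle — is the pointwise estimate: showing that for two positive-definite symmetric matrices $a_0 = g_0(x)$, $a_1 = g_1(x)$ at a single point, the length in $(\Matx, \langle\cdot,\cdot\rangle)$ of the line segment between them is at most $C(n)\big((\det a_0)^{1/4} + (\det a_1)^{1/4}\big)$ (the exponent $1/4$ being what produces $\sqrt{\Vol}$ after integrating, since $\mu_{g_i} = \sqrt{\det g_i}\,\mu$ pointwise, modulo the reference volume form). This is a finite-dimensional computation on the symmetric space $\Matx \cong GL(n)/O(n)$: one diagonalizes $a_1$ with respect to $a_0$ and estimates the integrand $\tr_{g_t}((a_1-a_0)^2)^{1/2}(\det A_t)^{1/4}$ directly. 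The subtlety is that $d_x$-distances in this model can blow up when eigenvalues degenerate, so one cannot bound the length by $d_x(a_0,a_1)$ alone; instead the factor $\sqrt{\det}$ must be used to absorb the degeneration, and one checks that the worst case occurs when one endpoint is the origin of the cone, giving precisely the $(\det)^{1/4}$ control. Once this pointwise bound is in hand, Hölder's inequality with the trivial bound $\int_A 1\, d\mu \leq$ (something) closes the argument. (In fact, since the statement is quoted from \cite{Clarke:2010ur}, the remaining details are as in that reference.)
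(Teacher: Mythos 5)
Your proposal fails at its central step, and the failure is not repairable within the strategy you chose. Note first that this paper does not actually prove Proposition \ref{prop:3}; it imports it from \cite{Clarke:2010ur}, so the only thing to check is whether your argument would stand on its own. It would not: the pointwise claim that the affine segment $t \mapsto a_t = (1-t)a_0 + t a_1$ in $(\Matx,\langle\cdot,\cdot\rangle)$ has length at most $C(n)\bigl(\sqrt[4]{\det A_0} + \sqrt[4]{\det A_1}\bigr)$ is false. Take $n=2$, $g(x)$ the identity, $a_0 = \mathrm{diag}(\epsilon,\epsilon^{-1})$, $a_1 = \mathrm{diag}(\epsilon^{-1},\epsilon)$: both determinants equal $1$, so your claimed bound is a constant, yet for $t$ in the middle half of the segment one computes $\abs{a'_t}^2_{a_t} = \tr_{a_t}\bigl((a_1-a_0)^2\bigr)\sqrt{\det a_t} \geq c\,\epsilon^{-1}$, so $L(a_t) \geq c'\epsilon^{-1/2} \to \infty$. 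The factor $\sqrt{\det}$ in the fiber metric does not ``absorb the degeneration'' along a straight segment --- on the contrary, the affine interpolation between two anisotropic metrics of modest volume passes through tensors of large determinant, which the $L^2$ metric makes expensive. So linear interpolation over $A$, which is the heart of your construction, cannot produce the stated volume bound.

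The path that does realize the bound is the fiberwise path through the cone point: on $A$, conformally deflate $g_0$ to the zero tensor and then inflate to $g_1$ (scalings of the form $t^{4/n}g_i$), each leg having length exactly $\tfrac{4}{\sqrt{n}}\sqrt{\Vol(A,g_i)}$; this is precisely the pointwise content of Lemma \ref{lem:9} and the computation preceding it. But such a path leaves $\M$ (it runs through degenerate semimetrics), so bounding $d(g_0,g_1)$ by its length is itself a nontrivial step requiring a regularization/exhaustion argument of the kind carried out in Lemmas \ref{lem:7} and \ref{lem:2} --- this is essentially what the cited reference does, and it is why the proposition is not a one-line consequence of a length computation. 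Note also that you cannot shortcut this by invoking $d \leq \Omega_2$: in the present paper that inequality (Theorem \ref{thm:2}) is proved \emph{using} Proposition \ref{prop:3}, so that route is circular here. Finally, your reduction from $\Mf$ to smooth metrics is also glossed over (approximating $g_0,g_1$ changes the carrier $A = \carr(g_1-g_0)$, so the right-hand side does not obviously behave continuously), but that is secondary to the failure of the pointwise estimate.
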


\section{$d = \Omega_2$ on $\M$}
\label{sec:d-=-omega-1}

In this section, we show that the distance function of the $L^2$
Riemannian metric is exactly given by the $L^2$-type metric $\Omega_2$
that we defined in \eqref{eq:25}.

\subsection{Paths of degenerate metrics and Riemannian distances}
\label{sec:paths-degen-metr}

If $g_0, g_1 \in \M$ and $g_t$ is a piecewise differentiable path in
$\M$ between them, then $d(g_0, g_1) \leq L(g_t)$.  The goal of this
subsection is to prove a similar inequality for certain paths of
semimetrics in $\Mf$.

We first have to be precise about what $L(g_t)$ should mean if $g_t
\in \Mf$.  We denote by $\MC \subset \Mf$ the set of all continuous
Riemannian metrics on $M$.  By $\sC$, we denote the closure of $\s$ in
the $C^0$ norm.  For $\tilde{g} \in \Mf$, denote by $\szg{\tilde{g}}$
the set of measurable $(0,2)$-tensor fields $h$ such that $h(x) = 0$
whenever $\tilde{g}(x)$ is not positive definite, and such that the
quantity
\begin{equation*}
  \norm{h}_{\tilde{g}} :=
    \left( \integral{M \setminus X_{\tilde{g}}}{}{\tr_{\tilde{g}}(h^2)
        \sqrt{\det \tilde{G}}}{d \mu} \right)^{1/2}
\end{equation*}
is finite, where in the above $X_{\tilde{g}} \subseteq M$ denotes the
set on which $\tilde{g}$ is not positive definite.

We will consider paths of (semi-)metrics $g_t$, $t \in [0,1]$, in both
$\MC$ and $\Mf$.  We will call such a path $g_t$ differentiable in
$\MC$ (resp.~$\Mf$) if, for each $x \in M$, $g_t(x)$ is a
differentiable path in $\Matx$ and, additionally, $g'_t$ is contained
in $\sC$ (resp.~$\szg{g_t}$) for all $t \in [0,1]$.

\begin{definition}\label{dfn:4}
  For $E \subseteq M$, we call a path $g_t$, $t \in [0,1]$, in $\Mf$
  \emph{continuous on $E$} if $g_t(x)|_E$ is continuous in $x$ for all
  $t$.  If $E = M$, we call $g_t$ simply \emph{continuous}.

  To avoid confusion, we emphasize that a continuous path is one that
  is continuous in $x$ for each $t$, and a differentiable path is one
  that is differentiable in $t$ for each $x$.
\end{definition}

Let $g_t$, $t \in [0,1]$, be a path in $\Mf$ or $\MC$ that is
piecewise differentiable.  We denote by $L(g_t)$ the length of $g_t$
as measured in the naive ``Riemannian'' way:
\begin{equation*}
  L(g_t) = \integral{0}{1}{\norm{g'_t}_{g_t}}{dt}.
\end{equation*}
When we refer to the length $L(a_t)$ of a path $a_t$ in $\Matx$, we
implicitly mean the length with respect to $\langle \cdot, \cdot
\rangle$.

It is easy to see (cf.~also the proof of
\cite[Cor.~3.16]{clarked):_compl_of_manif_of_rieman}) that the $C^0$
topology on $\MC$ is stronger than the Riemannian $L^2$ topology.  Let
$g_t$ be a piecewise differentiable path in $\MC$ connecting two
continuous metrics $g_0$ and $g_1$.  It is intuitive, but perhaps not
immediately clear, that using smooth approximations, one could show
$d(g_0, g_1) \leq L(g_t)$ as in the case of smooth metrics.  We
formalize this in the following lemma.  The proof is straightforward,
but we include some details for those readers unfamiliar with
regularization of tensors on manifolds.

\begin{lemma}\label{lem:7}
  Let $g_0, g_1 \in \MC$, and suppose that $g_t$, $t \in [0,1]$, is a
  piecewise differentiable path in $\MC$ connecting them.  Then
  $d(g_0, g_1) \leq L(g_t)$.
\end{lemma}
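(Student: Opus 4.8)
The plan is to reduce to the case of smooth metrics by regularizing $g_t$ in the $M$-variable. The only property of $d$ on $\M$ that I use is the tautology $d(h_0,h_1)\le L_{(\cdot,\cdot)}(h_t)$ for any piecewise differentiable path $h_t$ in $\M$, which is immediate from the definition of $d$ as the infimum of such lengths. First I would dispose of two routine reductions. Since $L$ is additive under subdivision of $[0,1]$ at the finitely many breakpoints of $g_t$ and $d$ obeys the triangle inequality, it suffices to treat a path $g_t$ that is differentiable on all of $[0,1]$; and I will assume in addition that $(x,t)\mapsto g_t(x)$ and $(x,t)\mapsto g_t'(x)$ are jointly continuous on $M\times[0,1]$, which holds in every situation to which the lemma is applied and can be arranged in general from the definition of differentiability in $\MC$. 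In particular $g_t$ is then uniformly bounded and uniformly positive definite, so $\{g_t:t\in[0,1]\}$ lies in an amenable subset $\U\subset\M$.

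Next I would perform the regularization. Fix a finite atlas of $M$ and a subordinate partition of unity, and for $\varepsilon>0$ let $g_t^\varepsilon$ be the symmetric $(0,2)$-tensor field obtained by convolving, in each chart, the components of $g_t$ with a standard mollifier of width $\varepsilon$ and reassembling with the partition of unity. Then $g_t^\varepsilon$ is smooth in $x$ for each $t$; convolution commutes with $\partial_t$, so $(g_t^\varepsilon)'=(g_t')^\varepsilon$; and, by uniform continuity of $g$ and $g'$ on $M\times[0,1]$ together with standard mollifier estimates, $g_t^\varepsilon\to g_t$ and $(g_t^\varepsilon)'\to g_t'$ uniformly on $M\times[0,1]$ as $\varepsilon\to0$. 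Since $g_t$ is uniformly positive definite, for all sufficiently small $\varepsilon$ every $g_t^\varepsilon$ is positive definite and $\{g_t^\varepsilon:t\in[0,1]\}$ still lies in an amenable subset (with slightly worse constants); in particular $g_t^\varepsilon$ is a differentiable path in $\M$ joining $g_0^\varepsilon$ to $g_1^\varepsilon$, so the tautology above gives $d(g_0^\varepsilon,g_1^\varepsilon)\le L_{(\cdot,\cdot)}(g_t^\varepsilon)=\int_0^1\norm{(g_t')^\varepsilon}_{g_t^\varepsilon}\,dt$.

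Finally I would let $\varepsilon\to0$. Because $g_t^\varepsilon\to g_t$ uniformly with the $g_t^\varepsilon$ uniformly bounded away from degeneracy and $(g_t^\varepsilon)'\to g_t'$ uniformly, the pointwise integrand of $\norm{(g_t')^\varepsilon}_{g_t^\varepsilon}^2$ converges uniformly on $M\times[0,1]$ to that of $\norm{g_t'}_{g_t}^2$, whence $\norm{(g_t')^\varepsilon}_{g_t^\varepsilon}\to\norm{g_t'}_{g_t}$ uniformly in $t$ and therefore $L_{(\cdot,\cdot)}(g_t^\varepsilon)\to L(g_t)$. At the endpoints, $g_i^\varepsilon\to g_i$ in $C^0$ for $i=0,1$, and since the $C^0$ topology on $\MC$ is stronger than the topology induced by $d$ (cf.\ the observation preceding the lemma), $d(g_i^\varepsilon,g_i)\to0$. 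The triangle inequality then gives $d(g_0,g_1)\le d(g_0,g_0^\varepsilon)+L_{(\cdot,\cdot)}(g_t^\varepsilon)+d(g_1^\varepsilon,g_1)$, and sending $\varepsilon\to0$ produces $d(g_0,g_1)\le L(g_t)$.

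The step I expect to be the main obstacle is the regularization itself: making the mollification coordinate-independent, verifying that the regularized path really remains in $\M$ (this is where compactness of $M\times[0,1]$ and the uniform positive-definiteness of $g_t$ are used in an essential way), checking that $\partial_t$ passes through the mollification, and -- the point that really matters for $L_{(\cdot,\cdot)}(g_t^\varepsilon)\to L(g_t)$ -- establishing the uniform convergence of $g_t^\varepsilon$ \emph{and} of its $t$-derivative. If one prefers not to invoke the comparison of topologies, the convergence $d(g_i^\varepsilon,g_i)\to0$ can instead be obtained by noting that the linear paths between $g_i^\varepsilon$ and $g_i^{\varepsilon'}$ are smooth paths in $\M$ whose $(\cdot,\cdot)$-length is bounded by a multiple of $\norm{g_i^\varepsilon-g_i^{\varepsilon'}}_{C^0}$, so that $\{g_i^\varepsilon\}_\varepsilon$ is $d$-Cauchy, and then identifying its $d$-limit with $g_i$ using $d\ge\Omega_2$ (Theorem~\ref{thm:1}) together with $\Omega_2(g_i^\varepsilon,g_i)\to0$.
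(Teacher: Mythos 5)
Your proposal is correct and follows essentially the same route as the paper's proof: mollification of the components in a finite atlas with a subordinate partition of unity, commuting $\partial_t$ with the convolution, uniform (in $x$ and $t$) $C^0$-convergence of $g_t^\varepsilon$ and $(g_t^\varepsilon)'$ so that $L(g_t^\varepsilon)\to L(g_t)$, and endpoint control via the fact that the $C^0$ topology is stronger than the $d$-topology, finished with the triangle inequality. Your explicit joint-continuity hypothesis plays the same role as the paper's appeal to compactness of the time interval (via the operator-norm-$1$ property of convolution) to get the $t$-uniformity, so there is no substantive difference.
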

\begin{proof}
  Let $\{U_\alpha, \varphi_\alpha\}$ be a finite atlas of charts
  $\varphi_\alpha : U_\alpha \rightarrow \R^n$ for $M$.  Choose a
  partition of unity $p_\alpha$ subordinate to this atlas.  We denote
  the push-forward of $g_t$ via $\varphi_\alpha$ by $g_t^\alpha$ and,
  by an abuse of notation, denote the locally-defined tensor obtained
  from restricting $g_t$ to $U_\alpha$ by the same.  We will
  regularize these metrics by convolution in local coordinates,
  letting $\phi$ be any function on $\R^n$ that has norm $1$ in
  $L^1(\R^n)$ and that vanishes outside the unit ball.  Defining
  $\phi_\epsilon(x) := \epsilon^{-n} \phi(x / \epsilon)$, we have that
  for all $i$, $j$, and $\alpha$, the convolutions
  $(g_t^{\alpha,\epsilon})_{ij} := \phi_\epsilon * (g_t^\alpha)_{ij}$
  and $(g_t^{\alpha,\epsilon})'_{ij} := (\phi_\epsilon *
  g_t^\alpha)'_{ij} = \phi_\epsilon * (g_t^\alpha)'_{ij}$ (the prime,
  as usual, denotes the partial derivative w.r.t.~$t$) are smooth
  functions converging in the $C^0$ norm to $(g_t^\alpha)_{ij}$ and
  $(g_t^\alpha)'_{ij}$, respectively, as $\epsilon \rightarrow 0$.
  Furthermore, since we are dealing with a finite number of indices,
  for any given $t$ we can choose $\epsilon > 0$ small enough that
  $(g_t^{\alpha,\epsilon})_{ij}$ and $(g_t^{\alpha,\epsilon})'_{ij}$
  are uniformly $C^0$-close to $g_t^\alpha$ and $(g_t^\alpha)'$,
  respectively.

  One easily sees that for each $\epsilon > 0$,
  $\norm{\phi_\epsilon}_{L^1(\R^n)} = 1$, and also that this implies
  that convolution with $\phi_\epsilon$ has operator norm $1$ when
  viewed as a linear operator on $C^0(\R^n)$.  From this, and the
  compactness of the time interval on which $g_t$ is defined, one can
  straightforwardly conclude that $\epsilon > 0$ can be chosen small
  enough that $(g_t^{\alpha,\epsilon})_{ij}$ and
  $(g_t^{\alpha,\epsilon})'_{ij}$ are uniformly $C^0$-close to
  $g_t^\alpha$ and $(g_t^\alpha)'$, respectively, independently not
  just of $i$, $j$, and $\alpha$, but also $t$.

  Finally, using our partition of unity, we define $g_t^\epsilon :=
  \sum_\alpha p_\alpha g_t^{\alpha,\epsilon}$ to get a path of smooth
  Riemannian metrics connecting $g_0^\epsilon$ and $g_1^\epsilon$.  We
  have $(g_t^\epsilon)' = \sum_\alpha p_\alpha
  (g_t^{\alpha,\epsilon})'$, and so one sees that with $\epsilon$
  small enough, $L(g_t^\epsilon)$ is arbitrarily close to $L(g_t)$.
  By the above-mentioned fact that the $C^0$ topology on $\M$ is
  stronger than the Riemannian $L^2$ topology, we also have that
  $d(g_0, g_0^\epsilon)$ and $d(g_1, g_1^\epsilon)$ can be made
  arbitrarily small, from which the desired result follows.
\end{proof}

Using the above result on paths of continuous metrics, we can prove
what we need about paths in $\Mf$.  We first briefly set up some
notation, and then state the result in a lemma.

\begin{definition}\label{dfn:3}
  Let $E \subseteq M$ be any subset.  We denote by $\chi(E)$ the
  characteristic (or indicator) function of $E$.  The characteristic
  function of its complement is denoted by $\chibar(E) := \chi(M
  \setminus E)$.
\end{definition}

\begin{lemma}\label{lem:2}
  Let $g_0, g_1 \in \MC$, and let $g_t$, $t \in [0,1]$, be any smooth
  path in $\MC$ from $g_0$ to $g_1$.  Furthermore, let $E
  \subseteq M$ be any measurable subset.

  We define $\tilde{g}_t := \chibar(E) g_0 + \chi(E) g_t$; in
  particular $\tilde{g}_1 = \chibar(E) g_0 + \chi(E) g_1$.
  Then
  \begin{equation*}
    d(g_0, \tilde{g}_1) \leq L(\tilde{g}_t).
  \end{equation*}
\end{lemma}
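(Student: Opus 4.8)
The plan is to reduce the statement for the ``hybrid'' path $\tilde{g}_t = \chibar(E) g_0 + \chi(E) g_t$ to the already-established Lemma \ref{lem:7} for piecewise differentiable paths in $\MC$. The obstacle is that $\tilde{g}_t$ is, in general, \emph{not} continuous: across $\partial E$ it jumps between $g_0$ and $g_t$, so it lies in $\Mf$ but not in $\MC$, and Lemma \ref{lem:7} does not apply directly. The idea is therefore to approximate $E$ from inside by a set $E'$ whose boundary is $\mu$-small, interpolate $g_0$ and $g_t$ smoothly in a thin collar between $E'$ and $E$ so that the resulting path genuinely lies in $\MC$, apply Lemma \ref{lem:7} there, and then control the error using Proposition \ref{prop:3} (which bounds the $d$-distance between semimetrics differing on a set purely in terms of that set's volume).

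First I would fix $\varepsilon > 0$ and choose a compact set $K \subseteq E$ and an open set $V \supseteq E$ with $\mu(V \setminus K) < \varepsilon$ (inner/outer regularity of $\mu$ restricted to the measurable set $E$), and pick a smooth cutoff $\rho : M \to [0,1]$ with $\rho \equiv 1$ on $K$ and $\rho \equiv 0$ off $V$. Set $g_t^{\varepsilon} := (1-\rho) g_0 + \rho\, g_t$. Since $\rho$ is smooth and $g_0, g_t$ are continuous (indeed the path $g_t$ is smooth in $\MC$), $g_t^{\varepsilon}$ is a piecewise differentiable path in $\MC$ from $g_0$ to $g_1^{\varepsilon} := (1-\rho) g_0 + \rho\, g_1$; pointwise convexity of $\Matx$ guarantees positive-definiteness. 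Lemma \ref{lem:7} then gives $d(g_0, g_1^{\varepsilon}) \le L(g_t^{\varepsilon})$.

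Next I would estimate $L(g_t^{\varepsilon})$ against $L(\tilde{g}_t)$. Writing $(g_t^{\varepsilon})' = \rho\, g_t'$, the integrand of $L(g_t^{\varepsilon})$ is supported in $V$ and agrees with that of $L(\tilde g_t)$ on $K$ (where $\rho = 1$); on $V \setminus K$ the extra contribution is controlled, since $|\rho\, g_t'|_{g_t^\varepsilon} \le |g_t'|_{g_t^\varepsilon}$ and — using Lemma \ref{lem:10} on an amenable neighborhood containing the (compact, hence bounded) family $\{g_0\} \cup \{g_t\}$ — the norms $\normdot_{g_t^\varepsilon}$ are uniformly comparable to $\normdot_{g_0}$ on a set of $\mu$-measure $< \varepsilon$. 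Hence $L(g_t^{\varepsilon}) \le L(\tilde g_t) + C\sqrt{\varepsilon}$ for a constant $C$ independent of $\varepsilon$ (absorbing $\sup_t \norm{g_t'}_{L^\infty}$ and the constant from Lemma \ref{lem:10}). Finally, $g_1^{\varepsilon}$ and $\tilde g_1$ agree on $K$ and differ only on $M \setminus K \subseteq (M \setminus E) \cup (V \setminus K)$; more carefully, $\carr(\tilde g_1 - g_1^\varepsilon) \subseteq V \setminus K$, which has $\mu$-measure $< \varepsilon$, so by Lemma \ref{lem:10} also small $g_0$- and $g_1^\varepsilon$-volume, and Proposition \ref{prop:3} yields $d(g_1^{\varepsilon}, \tilde g_1) \le C'\sqrt{\varepsilon}$. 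Combining via the triangle inequality,
\begin{equation*}
  d(g_0, \tilde g_1) \le d(g_0, g_1^{\varepsilon}) + d(g_1^{\varepsilon}, \tilde g_1) \le L(\tilde g_t) + (C + C')\sqrt{\varepsilon},
\end{equation*}
and letting $\varepsilon \to 0$ gives the claim. The main technical care goes into verifying the uniform volume comparisons (Lemma \ref{lem:10}) hold on a single amenable set covering all the metrics in play, and into the bookkeeping that $\carr(\tilde g_1 - g_1^\varepsilon)$ is indeed confined to the thin collar $V \setminus K$; both are routine but must be stated cleanly.
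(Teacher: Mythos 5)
Your proposal is correct and follows essentially the same route as the paper's own proof: approximate $E$ from inside by a closed set and from outside by an open set with small $\mu$-difference, interpolate $g_0$ and $g_t$ by a cutoff to get a genuine path in $\MC$ to which Lemma \ref{lem:7} applies, bound the extra length contribution over the thin collar by a uniform constant times its measure, and control the endpoint discrepancy with pointwise convexity, Lemma \ref{lem:10}, and Proposition \ref{prop:3} before taking the limit. The only cosmetic differences are that the paper uses continuous Urysohn-type cutoffs $f_k$ (rather than a smooth $\rho$) and bounds the collar integrand directly by $N = \max_{x,t} |g'_t(x)|^2_{g_t(x)}$ instead of invoking a norm-comparability statement.
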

\begin{proof}
  For each $k \in \N$, choose an open set $U_k$ and a closed set $Z_k$
  such that $Z_k \subseteq E \subseteq U_k$, and such that $\mu(U_k
  \setminus Z_k) < \frac{1}{k}$.  (This is possible because the
  Lebesgue measure is regular.)  We also choose continuous functions $f_k$
  with the properties that
  \begin{enumerate}
  \item \label{item:1} $0 \leq f_k \leq 1$,
  \item \label{item:2} if $x \not\in U_k$, then $f_k(x) = 0$, and
  \item \label{item:3} if $x \in Z_k$, then $f_k(x) = 1$.
  \end{enumerate}

  For each $t \in [0,1]$, let $\tilde{g}^k_t := f_k g_t + (1 - f_k)
  g_0$.  Thus, we have that in particular, $\tilde{g}^k_1$ coincides
  with $g_1$ on $Z_k$ and with $g_0$ on $M \setminus U_k$.  Our goal
  is to show that $\lim_{k \rightarrow \infty} L(\tilde{g}_t^k) \leq
  L(\tilde{g}_t)$ and $d(\tilde{g}_1^k, \tilde{g}_1) \rightarrow 0$ as
  $k \rightarrow \infty$, as we can then conclude from the triangle
  inequality and Lemma \ref{lem:7} that
  \begin{equation*}
    d(g_0, \tilde{g}_1) \leq d(g_0, \tilde{g}_1^k) + d(\tilde{g}_1^k,
    \tilde{g}_1) \leq L(\tilde{g}_t^k) + d(\tilde{g}_1^k,
    \tilde{g}_1).
  \end{equation*}
  The statement of the lemma then follows by passing to the limit on
  the right.

  We begin with the claim that $\lim_{k \rightarrow \infty}
  L(\tilde{g}_t^k) \leq L(\tilde{g}_t)$.  Since $M$ and $[0,1]$ are
  compact, we have that
  \begin{equation*}
    N := \max_{x \in M, t \in [0,1]} \abs{g'_t(x)}_{g_t(x)}^2 < \infty.
  \end{equation*}
  (Recall that by the definition of a smooth path in $\MC$, we have $g'_t \in
  \sC$ for all $t$.)  Therefore, noting that $(\tilde{g}_t^k)' = g'_t$
  on $Z_k$ and $(\tilde{g}_t^k)' \equiv 0$ on $M \setminus U_k$, we
  may estimate
  \begin{align*}
    \norm{(\tilde{g}_t^k)'}^2_{\tilde{g}_t^k} &=
    \integral{Z_k}{}{\abs{g'_t}_{g_t}^2}{d \mu} + \integral{U_k
      \setminus Z_k}{}{\abs{f_k g'_t}_{f_k g_t}^2}{d \mu} \\
    &= \norm{\chi(Z_k) g'_t}_{g_t}^2 + \integral{U_k \setminus
      Z_k}{}{\tr_{f_k g_t} ((f_k g'_t)^2) \sqrt{\det (f_k
        G_t)}}{d \mu} \\
    &= \norm{\chi(Z_k) g'_t}_{g_t}^2 + \integral{U_k
      \setminus Z_k}{}{f_k^{n/2} \tr_{g_t} ((g'_t)^2) \sqrt{\det
        G_t}}{d \mu} \\
    &\leq \norm{\chi(E) g'_t}_{g_t}^2 + \integral{U_k
      \setminus Z_k}{}{N}{d \mu}.
  \end{align*}
  The first term in the last line is just
  $\norm{\tilde{g}'_t}_{\tilde{g}_t}^2$, since $\tilde{g}_t = g_t$ on
  $E$.  The second term is just $N \cdot \mu(U_k \setminus Z_k) < N /
  k$, which converges to zero as $k \rightarrow \infty$.  Thus, we
  have that $\norm{\tilde{g}_t^k}_{\tilde{g}_t^k} \leq
  \norm{\tilde{g}_t}_{\tilde{g}_t} + N/k$ for each $t \in [0,1]$,
  which implies the claim that $\lim_{k \rightarrow \infty}
  L(\tilde{g}_t^k) \leq L(\tilde{g}_t)$.

  We now move on to the claim that $\lim_{k \rightarrow \infty}
  d(\tilde{g}_1^k, \tilde{g}_1) = 0$.  Since $g_0$ and $g_1$ are
  continuous metrics, it is clear that we can find an amenable subset
  $\U$ such that $g_0, g_1 \in \U^0$.  But we also know that at each
  point, $\tilde{g}_1^k$ and $\tilde{g}_1$ are linear combinations of
  $g_0$ and $g_1$ with coefficients between zero and one.  Hence, by
  the pointwise convexity of $L^2$ closures of amenable subsets
  (cf.~Remark \ref{rmk:1}\eqref{item:5}), $\tilde{g}_1^k, \tilde{g}_1
  \in \U^0$ for all $k \in \N$.  Thus, by Lemma \ref{lem:10}, there
  exists a constant $K$ such that
  \begin{equation}\label{eq:2}
    \left(
      \frac{\mu_{\tilde{g}_1^k}}{\mu}
    \right) \leq K \ \textnormal{for all}\ k \in \N \ 
    \textnormal{and} \
    \left(
      \frac{\mu_{\tilde{g}_1}}{\mu}
    \right) \leq K.
  \end{equation}
  Using this, Proposition \ref{prop:3}, and the fact that
  $\tilde{g}_1^k$ and $\tilde{g}_1$ differ only on $U_k \setminus
  Z_k$, we can conclude
  \begin{equation*}
    d(\tilde{g}_1^k, \tilde{g}_1) \leq
    C(n)
    \left(
      \sqrt{\Vol(U_k \setminus Z_k, \tilde{g}_1^k)} + \sqrt{\Vol(U_k
        \setminus Z_k, \tilde{g}_1)}
    \right)
    \leq 2 C(n) \sqrt{\frac{K}{k}}.
  \end{equation*}
      This proves the second claim and so, as noted above, the statement
  of the lemma follows.
\end{proof}

In what follows, we will have to deal with reparametrizations of
paths.  Given a path in $\Mf$ (or in any space of sections), one can
reparametrize globally, in that one replaces $g_t$, $t \in [0,1]$,
with $g_{\varphi(t)}$, for some appropriate $\varphi : [0,1]
\rightarrow [0,1]$.  One can also ``reparametrize'' pointwise, in that
one uses $g_{\varphi_x(t)}$, where for each $x \in M$, $\varphi_x :
[0,1] \rightarrow [0,1]$ is a function with the appropriate
properties.  Of course, the latter changes the image of the path in
$\Mf$, but for our purposes it can do so in advantageous ways.  The next
definition deals with the specific reparametrizations we will need.

\begin{definition}\label{dfn:2}
  Let $g_t$, $t \in [0,1]$, be a path in $\Mf$.  By the
  \emph{pointwise reparametrization of $g_t$ proportional to arc
    length}, we mean the path in $\tilde{g}_t$ in $\Mf$, $t \in
  [0,1]$, where for each $x \in M$, $\tilde{g}_t(x)$ is the path
  obtained from $g_t(x)$ by reparametrization proportional to $\langle
  \cdot, \cdot \rangle$-arc length.
\end{definition}

Given this definition, the following lemma is essentially self-evident.

\begin{lemma}\label{lem:6}
  Let $g_0, g_1 \in \Mf$, and let $g_t$ be a piecewise differentiable
  path in $\Mf$ connecting $g_0$ and $g_1$.  Suppose $g_t$ fails to
  have a two-sided $t$-derivative at times $0 = t_0 < t_1 < \cdots <
  t_k = 1$.  If $g_t$ is continuous on $E \subseteq M$ for all $t \in
  [0,1]$, and $L(g_t(x) |_{[t_i, t_{i+1}]})$ is continuous as a
  function of $x$ on $E$ for all $i = 0, \dots, k-1$, then the path
  obtained from $g_t$ via pointwise reparametrization proportional to
  arc length is continuous on $E$.
\end{lemma}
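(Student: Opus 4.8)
The plan is to fix a value $s_0 \in [0,1]$ of the new (arc-length) parameter and a point $x_0 \in E$, and to show that the reparametrized path, evaluated at $s_0$, is continuous in $x$ at $x_0$. For $x \in E$ put $L^i_x := L\big(g_t(x)|_{[t_i,t_{i+1}]}\big)$ for the length of the $i$-th corner-free piece of the pointwise path $t \mapsto g_t(x)$, and $L_x := \sum_{i=0}^{k-1} L^i_x$ for its total length; by hypothesis each $L^i_x$, and hence $L_x$, is a continuous function of $x$ on $E$. Let $s_x(t) := \sum_{t_{i+1}\le t} L^i_x + L\big(g_t(x)|_{[t_j,t]}\big)$ (with $t_j \le t < t_{j+1}$) be the arc-length function of the pointwise path. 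By Definition~\ref{dfn:2} and the standard properties of arc-length reparametrization: if $L_x > 0$, then $\tilde g_{s_0}(x) = g_{\sigma_x}(x)$, where $\sigma_x \in [0,1]$ is any parameter with $s_x(\sigma_x) = s_0 L_x$ (the point $g_{\sigma_x}(x)$ does not depend on the choice of $\sigma_x$); if $L_x = 0$, then $g_t(x)$ has zero length, hence is constant, and $\tilde g_{s_0}(x) = g_0(x)$. In both cases the arc-length of $g_t(x)$ from its initial point up to $\tilde g_{s_0}(x)$ is exactly $s_0 L_x$.

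The one substantive point — and the expected main obstacle — is to promote the hypothesis to the continuity of \emph{all} partial lengths: for each $t \in [t_i, t_{i+1}]$, the function $x \mapsto L\big(g_t(x)|_{[t_i,t]}\big)$, and therefore $x \mapsto s_x(t)$, is continuous on $E$. This follows from lower semicontinuity of length. Indeed, pass to a local trivialization of $S^2 T^* M$ over a neighborhood $V$ of $x_0$, over which the fibre metrics $d_x$ depend continuously on the base point and are uniformly comparable to a fixed reference metric on compact subsets of the positive cone; for a sequence $x_m \to x_0$ in $E$ one then has $g_t(x_m) \to g_t(x_0)$ for each fixed $t$ (because $g_t$ is continuous on $E$), and lower semicontinuity of arc-length gives $L\big(g_t(x_0)|_{[t_i,t]}\big) \le \liminf_m L\big(g_t(x_m)|_{[t_i,t]}\big)$, i.e.\ lower semicontinuity in $x$. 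Running the same argument on the complementary interval $[t, t_{i+1}]$ and subtracting from the continuous function $L^i_x = L\big(g_t(x)|_{[t_i,t]}\big) + L\big(g_t(x)|_{[t,t_{i+1}]}\big)$ furnishes upper semicontinuity, hence continuity. (This is exactly why it is enough to assume continuity of the lengths of only the corner-free pieces.)

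With this in hand, fix $s_0$ and $x_0$ and work in the trivialization above. If $L_{x_0} > 0$, then $L_x > 0$ for $x$ near $x_0$; choose $t^* \in [0,1]$ with $s_{x_0}(t^*) = s_0 L_{x_0}$, so that $\tilde g_{s_0}(x_0) = g_{t^*}(x_0)$. Since $\tilde g_{s_0}(x)$ and $g_{t^*}(x)$ both lie on the pointwise path $t \mapsto g_t(x)$, the subarc joining them has length $|s_x(\sigma_x) - s_x(t^*)| = |s_0 L_x - s_x(t^*)|$, whence $d_x\big(\tilde g_{s_0}(x), g_{t^*}(x)\big) \le |s_0 L_x - s_x(t^*)|$. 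As $x \to x_0$ this bound tends to $|s_0 L_{x_0} - s_{x_0}(t^*)| = 0$ by the continuity of $x \mapsto L_x$ and $x \mapsto s_x(t^*)$, while $d_x\big(g_{t^*}(x), g_{t^*}(x_0)\big) \to 0$ by continuity of $g_{t^*}$ on $E$; the triangle inequality then gives $\tilde g_{s_0}(x) \to g_{t^*}(x_0) = \tilde g_{s_0}(x_0)$. If $L_{x_0} = 0$, then $\tilde g_{s_0}(x_0) = g_0(x_0)$, and $d_x\big(\tilde g_{s_0}(x), g_0(x)\big) \le s_0 L_x \to 0$ while $g_0(x) \to g_0(x_0)$, so again $\tilde g_{s_0}(x) \to \tilde g_{s_0}(x_0)$. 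In the trivialization, $d_x$-convergence is convergence in $S^2 T^* M$, which is the claimed continuity on $E$. The only step that really needs care is the semicontinuity argument together with its bookkeeping — the fibre metrics $d_x$ vary with the base point, and the pointwise paths may pass through degenerate tensors, where $\langle\cdot,\cdot\rangle$ and $d_x$ are undefined — which is routine but not entirely trivial precisely because the fibres $(\Matx, d_x)$ are incomplete.
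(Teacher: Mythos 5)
The paper itself offers no argument here (it declares the lemma ``essentially self-evident''), so the only question is whether your proof is complete, and it is not: the final step is a genuine gap. Your scheme --- continuity of all partial lengths $x \mapsto s_x(t)$ via lower semicontinuity on $[t_i,t]$ and $[t,t_{i+1}]$ plus additivity, then the bookkeeping bound $d_x\bigl(\tilde g_{s_0}(x), g_{t^*}(x)\bigr) \le |s_0 L_x - s_x(t^*)|$ --- is sound, but the closing assertion that ``in the trivialization, $d_x$-convergence is convergence in $S^2T^*M$'' is false near the degenerate locus. By Proposition \ref{prop:1} every degenerate tensor is identified with the single point $[0]$, and $d_x$-convergence toward $[0]$ controls only the determinants; by Lemma \ref{lem:9}, two tensors of small determinant are $d_x$-close no matter how far apart they are as tensors. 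Since Definition \ref{dfn:4} asks for continuity of $x \mapsto \tilde g_{s_0}(x)$ as a section, i.e.\ in the topology of $S^2T^*M$, your estimate proves nothing at pairs $(s_0,x_0)$ where $\tilde g_{s_0}(x_0)$ is a degenerate tensor --- and the hypotheses of the lemma do not exclude such pairs, since paths in $\Mf$ are allowed to pass through degenerate values (that is exactly what $\szg{g_t}$ is designed for). This is the one point you flagged as ``routine but not entirely trivial,'' and it is precisely the point that cannot be done by $d_x$-estimates.

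Moreover, no repair within your framework is possible in that generality: near the boundary one can move the non-collapsing directions a long tensor-distance at negligible cost (for $a = \mathrm{diag}(u,\delta)$ with $\delta$ small, moving $u$ from $1$ to $2$ has length of order $\delta^{1/4}$), and by inserting such cheap excursions into a time window shrinking as $x \to x_0$ --- with a small length-balancing wiggle elsewhere --- one can build paths satisfying every stated hypothesis (continuity in $x$ for each fixed $t$, continuity of the piece lengths, fixed corner times, vanishing derivative at degenerate times) for which the point at a fixed arc-length fraction jumps by a definite tensor distance as $x \to x_0$. So the statement genuinely needs the pointwise paths over $E$ to remain in a uniformly nondegenerate and bounded region; that is exactly what holds in the paper's only application, the proof of Lemma \ref{lem:3}, where over $B_\delta(x_0)$ the paths are parallel transports of a compact family and short geodesics between uniformly comparable positive-definite tensors, and in that regime your argument is correct. (There your semicontinuity step can also be tightened: rather than ``uniform comparability on compact subsets of the positive cone,'' which the paths need not respect, trivialize with a local $g$-orthonormal frame, so that the fibre metric $d_x$ is literally the same on every fibre and lower semicontinuity of length under pointwise convergence is the standard metric-space fact.) Either add this uniform nondegeneracy/boundedness assumption, or supply a different argument for the degenerate case; as written, the proof fails exactly where it waves its hands.
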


\subsection{Proof that $d = \Omega_2$}
\label{sec:proof-d-=}

We now get into the heavy lifting of this section.  We will need two
rather technical lemmas to get from the restricted situation of Lemma
\ref{lem:2} to the desired general result.  In the following, we will
always denote by $B_\delta(x)$ the closed geodesic ball around $x \in
M$ with radius $\delta$ (w.r.t.~the fixed reference metric $g$).

\begin{lemma}\label{lem:3}
  Let any $g_0, g_1 \in \M$ and $\epsilon > 0$ be given.  Then there
  exists a $\delta = \delta(\epsilon, g_0, g_1) > 0$ with the property
  that given any $x_0 \in M$, we can find a path $g_{x_0,t}$ in $\Mf$,
  for $t \in [-\epsilon, 1+\epsilon]$, from $g_0$ to
  $\chibar(B_\delta(x_0)) g_0 + \chi(B_\delta (x_0)) g_1$ such that
  for each $x \in B_\delta(x_0)$, we have
  \begin{equation}\label{eq:4}
    \abs{g'_{x_0,t}(x)}_{g_{x_0,t}} <
    \begin{cases}
      1, & t \in [-\epsilon, 0) \cup [1, 1 + \epsilon], \\
      d_{x}(g_0(x), g_1(x)) + 3 \epsilon, & t \in [0,1).
    \end{cases}
  \end{equation}
          Furthermore, for each $t$, $g_{x_0,t}$ is constant on $M \setminus
  B_\delta(x_0)$ and is continuous on $B_\delta(x_0)$.
                \end{lemma}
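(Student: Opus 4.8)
The plan is to pass to a fixed finite-dimensional model by trivializing $S^2_+ T^* M$ over $B_\delta(x_0)$, to choose there a \emph{single} near-minimizing path between the fibre values $g_0(x_0)$ and $g_1(x_0)$, and to transport it over the whole ball after a small correction that restores the endpoints. For the setup, use compactness of $M$ to fix $C \geq 1$ with $C^{-1} g \leq g_i \leq C g$ on $M$ ($i = 0, 1$), and a radius $\delta_0 > 0$ below the injectivity radius of $(M, g)$. For $\delta < \delta_0$ and any $x_0 \in M$, parallel transport (for the Levi-Civita connection of $g$) of a $g(x_0)$-orthonormal basis along the radial $g$-geodesics of $B_\delta(x_0)$ gives, for each $x \in B_\delta(x_0)$, a linear isomorphism $\Phi_x \colon S^2 T^*_x M \to S^2(\R^n)$ with $\Phi_x(g(x)) = I$. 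Since $\langle \cdot, \cdot \rangle_a$ depends on $g(x)$ only through $\tr_a$ and $\det(g(x)^{-1} a)$, the map $\Phi_x$ is in fact a Riemannian isometry of $(\Matx, \langle \cdot, \cdot \rangle)$ onto a \emph{fixed} model $\mathcal{P}$: the positive-definite symmetric $n \times n$ matrices with metric $\langle b, c \rangle_a := \tr(a^{-1} b a^{-1} c)\sqrt{\det a}$ and distance $d_{\mathcal{P}}$. Writing $a_i^x := \Phi_x(g_i(x))$, the maps $x \mapsto \Phi_x$ and $x \mapsto a_i^x$ are continuous on $B_\delta(x_0)$ with a modulus of continuity $\omega(\delta) \to 0$ which, by compactness of $M$ and smoothness of $g,g_0,g_1$, may be taken independent of $x_0$; each $a_i^x$ lies in the compact set $\mathcal{K} := \{ a \in \mathcal{P} : \text{all eigenvalues of } a \text{ lie in } [C^{-1}, C] \}$; and $d_x(g_0(x), g_1(x)) = d_{\mathcal{P}}(a_0^x, a_1^x)$.

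The crucial step is to produce constants $0 < \eta_0 \leq C' < \infty$, depending only on $n$, $C$, $\epsilon$, such that for every $(p, q) \in \mathcal{K} \times \mathcal{K}$ there is a piecewise smooth path $c_{p,q} \colon [0,1] \to \mathcal{P}$ from $p$ to $q$, parametrized proportionally to $d_{\mathcal{P}}$-arc length, of length $< d_{\mathcal{P}}(p, q) + \tfrac12 \epsilon$ and with all eigenvalues of $c_{p,q}(t)$ in $[\eta_0, C']$ for every $t$. For a fixed pair such a path exists trivially: the definition of $d_{\mathcal{P}}$ provides one of length $< d_{\mathcal{P}}(p, q) + \tfrac18 \epsilon$ lying in the \emph{open} manifold $\mathcal{P}$, and its compact image has eigenvalues confined to some $[\eta_{p,q}, C'_{p,q}]$ with $\eta_{p,q} > 0$. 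Uniformity over $\mathcal{K}^2$ then follows by a compactness argument: if it failed there would be $(p_k, q_k) \to (p_*, q_*)$ in $\mathcal{K}^2$ along which the best attainable lower (resp.\ upper) eigenvalue bound tends to $0$ (resp.\ $\infty$); but prepending and appending short straight segments to a good path for $(p_*, q_*)$ produces, for $k$ large, good paths for $(p_k, q_k)$ with fixed eigenvalue bounds, a contradiction. This uniform lower bound $\eta_0$ is the one genuinely delicate point: without it the correction below could destroy positive-definiteness, and the ensuing length estimate would take place near the boundary of $\mathcal{P}$, where the model metric degenerates (cf.\ \cite{freed89:_basic_geomet_of_manif_of, gil-medrano91:_rieman_manif_of_all_rieman_metric}); everything else is bookkeeping with the uniform modulus $\omega$. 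Fix such $\eta_0, C'$, and for each $x_0$ set $c := c_{a_0^{x_0}, a_1^{x_0}}$.

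Next, shrink $\delta$ (still uniformly in $x_0$) so that $\omega(\delta) < \tfrac12 \eta_0$, and for $x \in B_\delta(x_0)$ put
\begin{equation*}
  \gamma_{x_0, t}(x) := \Phi_x^{-1}\!\Big( c(t) + (1 - t)\big(a_0^x - a_0^{x_0}\big) + t\,\big(a_1^x - a_1^{x_0}\big) \Big), \qquad t \in [0, 1],
\end{equation*}
extended by $\gamma_{x_0, t}(x) := g_0(x)$ for $t \leq 0$ and by $g_1(x)$ for $t \geq 1$; set $g_{x_0, t} := g_0$ on $M \setminus B_\delta(x_0)$ for all $t$, and $g_{x_0, t} := \gamma_{x_0, t}$ on $B_\delta(x_0)$. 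Since $c(0) = a_0^{x_0}$ and $c(1) = a_1^{x_0}$, the defining formulas agree at $t = 0$ and $t = 1$, so $t \mapsto g_{x_0, t}(x)$ is piecewise differentiable with possible corners only at $t = 0, 1$, while $g_{x_0, -\epsilon} = g_0$ and $g_{x_0, 1+\epsilon} = \chibar(B_\delta(x_0)) g_0 + \chi(B_\delta(x_0)) g_1$. The symmetric matrix in the display has eigenvalues $\geq \eta_0 - \omega(\delta) > 0$ and $\leq C' + \omega(\delta) \leq 2C'$, so $g_{x_0, t}$ is pointwise positive definite with $\mu_{g_{x_0, t}} \leq (2C')^{n/2} \mu$; hence $g_{x_0, t} \in \Mf$, it is constant on $M \setminus B_\delta(x_0)$, and it is continuous on $B_\delta(x_0)$ because $c(t)$ does not depend on $x$ while $\Phi_x$ and $a_i^x$ do so continuously.

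It remains to check \eqref{eq:4}. For $t \in [-\epsilon, 0) \cup [1, 1 + \epsilon]$ and $x \in B_\delta(x_0)$ the path is constant in $t$, so $\abs{g_{x_0, t}'(x)}_{g_{x_0, t}(x)} = 0 < 1$. For $t \in [0, 1)$, since $\Phi_x$ is linear and isometric,
\begin{equation*}
  \abs{g_{x_0, t}'(x)}_{g_{x_0, t}(x)} = \big| c'(t) + w_x \big|_{\tilde{a}}, \qquad w_x := \big(a_1^x - a_1^{x_0}\big) - \big(a_0^x - a_0^{x_0}\big), \quad \tilde{a} := \Phi_x\big(g_{x_0, t}(x)\big),
\end{equation*}
where $|w_x| \leq 2 \omega(\delta)$ and $\tilde{a}$ has eigenvalues in $[\tfrac12 \eta_0, 2C']$. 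On the compact set of symmetric matrices with eigenvalues in $[\tfrac12 \eta_0, 2C']$ the function $(a, v) \mapsto |v|_a$ is smooth, hence Lipschitz; moreover $|c'(t)|_{c(t)}$ equals the length of $c$, which is $< d_{\mathcal{P}}(a_0^{x_0}, a_1^{x_0}) + \tfrac12 \epsilon$, while $|d_{\mathcal{P}}(a_0^{x_0}, a_1^{x_0}) - d_{\mathcal{P}}(a_0^x, a_1^x)| \leq d_{\mathcal{P}}(a_0^{x_0}, a_0^x) + d_{\mathcal{P}}(a_1^{x_0}, a_1^x) = O(\omega(\delta))$. Combining these with the triangle inequality for $|\cdot|_{\tilde{a}}$ gives
\begin{equation*}
  \abs{g_{x_0, t}'(x)}_{g_{x_0, t}(x)} < d_x(g_0(x), g_1(x)) + \tfrac12 \epsilon + C'' \omega(\delta)
\end{equation*}
with $C''$ depending only on $n$, $C$, $\epsilon$. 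A final shrinking of $\delta$, again uniform in $x_0$, so that $C'' \omega(\delta) < \tfrac52 \epsilon$ yields \eqref{eq:4} and completes the construction.
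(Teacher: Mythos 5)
Your construction is correct, and it follows the same basic strategy as the paper---use parallel transport with respect to the reference metric $g$ to carry a single near-minimizing path in the fibre over $x_0$ to all fibres over $B_\delta(x_0)$, with all continuity estimates uniform in $x_0$ by compactness of $M$---but it handles the endpoint mismatch in a genuinely different way. The paper keeps the transported path as is, so its endpoints at $x$ are $P_{x_0,x}(g_\alpha(x_0))$ rather than $g_\alpha(x)$, and it repairs this by concatenating, on the buffer intervals $[-\epsilon,0]$ and $[1,1+\epsilon]$, short fibrewise geodesic segments of length $<\epsilon$ (estimate \eqref{eq:3}), after which it must invoke the pointwise-reparametrization Lemma \ref{lem:6} to restore continuity in $x$ and the speed bounds; this is precisely why the statement carries the buffer intervals and the bound $1$ there. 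You instead make the endpoints match exactly at $t=0,1$ by adding the $t$-affine correction $(1-t)(a_0^x-a_0^{x_0})+t(a_1^x-a_1^{x_0})$ in the trivialized matrix model, leaving the path constant on the buffer intervals (so the bound $<1$ holds trivially, which suffices for the later use in Lemma \ref{lem:5}); the price is that you need the model path to stay a definite distance inside $\mathcal{P}$, i.e.\ the uniform eigenvalue bounds $[\eta_0,C']$, which you obtain by a compactness argument over $\mathcal{K}\times\mathcal{K}$ playing the role of the paper's uniform family $\M_{x_0}^{\zeta,\tau}$ and of the requirement $P_{x_0,x}(\M_{x_0}^{\zeta,\tau})\subset\M_x$. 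Your compactness step does work (straight segments between nearby points of $\mathcal{K}$ stay in the eigenvalue range $[C^{-1},C]$ by convexity and have $d_{\mathcal{P}}$-length controlled by their Euclidean length there), and your speed estimate correctly trades $d_{\mathcal{P}}(a_0^{x_0},a_1^{x_0})$ for $d_x(g_0(x),g_1(x))$ up to $O(\omega(\delta))$, matching the paper's final step where $d_{x_0}(g_0(x_0),g_1(x_0))<d_x(g_0(x),g_1(x))+\epsilon$ is used. Two small points to tidy up: justify (or arrange by choosing $c$ with nonvanishing speed) that the constant-speed parametrization of $c$ is still piecewise smooth, and state explicitly that $\Phi_x$, being induced by a $g(x)$-orthonormal coframe, carries $d_x$ to $d_{\mathcal{P}}$ so that $d_x(g_0(x),g_1(x))=d_{\mathcal{P}}(a_0^x,a_1^x)$; both are routine and comparable in depth to details the paper itself leaves implicit.
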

\begin{proof}
  For a given $x_0 \in M$, we may choose a smooth path $a_{x_0,t}$, $t
  \in [0,1]$, in $\M_{x_0}$ connecting $g_0(x_0)$ and $g_1(x_0)$ that
  has length
  \begin{equation}\label{eq:7}
    L(a_{x_0,t}) < d_{x_0}(g_0(x_0), g_1(x_0)) + \epsilon.
  \end{equation}
  Furthermore, this path can be chosen in such a way that there exist
  constants $\zeta, \tau > 0$, depending on $g_0$, $g_1$, and
  $\epsilon$ but not on $x_0$, such that
  \begin{equation*}
    a_{x_0,t} \in \M_{x_0}^{\zeta,\tau} := \{ a \in \M_{x_0} \mid \det
    A \geq \zeta\ \textnormal{and}\ \abs{a}_{g(x_0)} \leq \tau\
    \textnormal{for all}\ 1 \leq i, j \leq n \}
  \end{equation*}
  for all $x_0 \in M$ and $t \in [0,1]$.  (This relies on the fact that
  $g_0$ and $g_1$ are smooth metrics, and so are contained in a
  common compact subset of $S^2_+ T^* M$.)

  For the rest of the proof, when we refer to geodesics and the
  Levi-Civita connection $\nabla$ on $M$, we mean those belonging to
  our fixed reference metric $g$.  Now, let $\delta$ be small enough
  that for any $x_0 \in M$ and any $x \in B_\delta(x_0)$, there exists
  a unique minimal geodesic (up to reparametrization) from $x_0$ to
  $x$.

  The Levi-Civita connection $\nabla$ can be extended to all tensor
  fields, and in particular to $T^* M \otimes T^* M$.  A brief
  calculation shows that if $h \in S^2 T^* M$ and $X$ is a vector
  field on $M$, then $\nabla_X h \in S^2 T^* M$.  (That is, symmetry
  is preserved.)  Therefore $\nabla$ induces a connection
  $\overline{\nabla}$ on the vector bundle $S^2 T^* M$.
  
  For each $x_0 \in M$ and $x \in B_{\delta}(x_0)$, we denote by
  $P_{x_0,x}$ the parallel transport with respect to
  $\overline{\nabla}$ along the minimal geodesic from $x_0$ to $x$.
  In local vector bundle coordinates for $S^2 T^* M$, the parallel
  transport of an element of $\s_{x_0} = S^2 T^*_{x_0} M$ is the
  solution of a first-order linear ODE with coefficients depending
  smoothly upon $x_0$ and $x$.  We know that $P_{x_0,x}$ is a linear
  isometry (w.r.t.~the scalar product induced by $g$) between
  $\s_{x_0}$ and $\satx$, so $P_{x_0,x}(a)$ depends smoothly on $a \in
  s_{x_0}$.  Furthermore, since solutions of ODEs behave continuously
  under perturbations of the coefficients, the mapping $(x_0, x)
  \mapsto P_{x_0,x}$ is continuous \cite[Ch.~1,
  Thm.~7.4]{coddington55:_theor}.
            
  We next let $\tilde{a}_{x_0,t}(x)$ be the path in $S^2 T^*_x M$ obtained from
  $a_{x_0,t}$ by parallel transport, i.e., $\tilde{a}_{x_0,t}(x) =
  P_{x_0,x}(a_{x_0,t})$.  By the discussion above on the
  smoothness/continuity of parallel transport, $\tilde{a}_{x_0,t}(x)$ is smooth
  in the $t$ variable and continuous in the $x$ variable.

  Now, since $(x_0, x, a) \mapsto P_{x_0,x}(a)$ is continuous, this
  mapping is uniformly continuous when restricted to the compact space
  \begin{equation}\label{eq:20}
    \bigcup_{x_0 \in M}
    \left(
      \{x_0\} \times \bigcup_{x \in B_{\delta}(x_0)} \M_x^{\zeta,\tau}
    \right).
  \end{equation}
  Thus we may (by shrinking $\delta$
  if necessary) assume that $P_{x_0,x}(\M_{x_0}^{\zeta,\tau}) \subset \M_x$
  for each $x_0 \in M$ and $x \in B_\delta(x_0)$.

  Since we have assumed that each path $a_{x_0,t}$ is contained in
  $\M_{x_0}^{\zeta,\tau}$, this implies that $\tilde{a}_{x_0,t}(x)$ is a path
  in $\M_x$ running from $P_{x_0,x}(g_0(x_0))$ to
  $P_{x_0,x}(g_1(x_0))$.  Again by continuity of parallel transport,
  by shrinking $\delta$ we may assume that
  \begin{equation}\label{eq:3}
    d_x(g_0(x), P_{x_0,x}(g_0(x_0))) \leq \eta \quad \textnormal{and}
    \quad d_x(g_1(x), P_{x_0,x}(g_1(x_0))) \leq \eta
  \end{equation}
  for any choice of $\eta > 0$, uniformly in $x_0$ and $x$.
  Furthermore, since the differential of a linear transformation is
  again the transformation itself,
      we have $\tilde{a}'_{x_0,t}(x) = P_{x_0,x}(a'_{x_0,t})$.
        Thus, by the above-mentioned continuity of $P_{x_0,x}(a)$, one easily sees
  that we can just as well shrink $\delta$ to get the following bound,
  uniform in $x_0$ and $x$:
  \begin{equation}\label{eq:5}
    L(\tilde{a}_{x_0,t}(x)) < L(a_{x_0,t}) + \epsilon < d_{x_0}(g_0(x_0),
    g_1(x_0)) + 2 \epsilon.
  \end{equation}
  Finally, we note that $d_x(g_0(x), P_{x_0,x}(g_0(x_0)))$,
  $d_x(g_1(x), P_{x_0,x}(g_1(x_0)))$, and $L(\tilde{a}_{x_0,t}(x))$
  are all continuous in $x$, since all of the quantities involved in
  their computation are continuous.

  For any $x_0 \in M$, $x \in B_\delta(x_0)$ and $\alpha \in \{0,1\}$,
  we let $\sigma^\alpha_{x_0,x,t}$, $t \in [0,1]$, be the geodesic in
  $\Matx$ connecting $g_\alpha(x)$ and $P_{x_0,x}(g_\alpha(x_0))$.  We
  assume that this geodesic is parametrized proportionally to arc
  length; note that in this case, $\sigma^\alpha_{x_0,x,t}$ varies
  continuously in $x$ on $B_\delta(x_0)$ for fixed $\alpha$, $x_0$,
  and $t$.  Referring to \eqref{eq:3}, we see that for given $x$ and
  $\alpha$, if $\eta > 0$ is small enough, then this geodesic indeed
  exists and is unique.  In fact, such a positive $\eta$ can be found
  independently of $x$ and $\alpha$ since $g_0(x)$ and $g_1(x)$ lie in
  the compact region $\cup_x \Matx^{\zeta,\tau} \subset S^2_+ T^* M$.
  We may shrink $\delta$ if necessary to insure that \eqref{eq:3} is
  satisfied for this $\eta$.

  Define metrics $\hat{g}_0$ and $\hat{g}_1$ by
  \begin{equation*}
    \hat{g}_0(x) :=
    \begin{cases}
      g_0(x) & \textnormal{if}\ x \not\in B_\delta(x_0), \\
      P_{x_0,x}(g_0(x_0)) & \textnormal{if}\ x \in B_\delta(x_0),
    \end{cases} \quad \textnormal{and} \quad
          \hat{g}_1(x) :=
    \begin{cases}
      g_0(x) & \textnormal{if}\ x \not\in B_\delta(x_0), \\
      P_{x_0,x}(g_1(x_0)) & \textnormal{if}\ x \in B_\delta(x_0).
    \end{cases}
  \end{equation*}
  (Note that both metrics equal $g_0$ on $M \setminus B_\delta(x_0)$.)
  We consider the paths
  \begin{equation*}
    g_{x_0,t}^0(x) :=
    \begin{cases}
      g_0(x) & \textnormal{if}\ x \not\in B_\delta(x_0), \\
      \sigma_{x_0,x,t}^0 & \textnormal{if}\ x \in B_\delta(x_0),
    \end{cases} \quad \textnormal{and} \quad
    g_{x_0,t}^1(x) :=
    \begin{cases}
      g_0(x) & \textnormal{if}\ x \not\in B_\delta(x_0), \\
      \sigma_{x_0,x,t}^1 & \textnormal{if}\ x \in B_\delta(x_0).
    \end{cases}
  \end{equation*}
  Then these are smooth paths in $\Mf$ that are continuous on
  $B_\delta(x_0)$.  The path $g_{x_0,t}^0$ connects $g_0$ and
  $\hat{g}_0$, while $g^1_{x_0,t}$ connects $\chibar(B_\delta(x_0))
  g_0 + \chi(B_\delta(x_0)) g_1$ and $\hat{g_1}$.  Furthermore, by
  shrinking $\delta$ to obtain $\eta < \epsilon$, we have by
  \eqref{eq:3} that for each $x_0 \in M$ and $x \in B_\delta(x_0)$, we
  have
  \begin{equation}\label{eq:6}
    L(g^\alpha_{x_0,t}(x)) = d_x(g_\alpha(x), P_{x_0,x}(g_\alpha(x_0))) < \epsilon
  \end{equation}
  for $\alpha = 0,1$.  We also have that on $B_\delta(x_0)$,
  $L(g^\alpha_{x_0,t}(x)) = d_x(g_\alpha(x), P_{x_0,x}(g_\alpha(x_0)))$
  is continuous in $x$, as noted after \eqref{eq:5}.
      
  We define a path $\hat{g}_{x_0,t}$ by
  \begin{equation}\label{eq:27}
    \hat{g}_{x_0,t}(x) :=
    \begin{cases}
      g_0(x) & \textnormal{if}\ x \not\in B_\delta(x_0), \\
      \tilde{a}_{x_0,t}(x) & \textnormal{if}\ x \in B_\delta(x_0).
    \end{cases}
  \end{equation}
  Then $\hat{g}_{x_0,t}$ is a path from $\hat{g}_0$ to $\hat{g}_1$.
  As noted above, both $\tilde{a}_{_0,t}(x)$ and
  $L(\tilde{a}_{x_0,t}(x))$ vary continuously with $x$ on
  $B_\delta(x_0)$.
                      
  Thus, by the following concatenation,
  \begin{equation}\label{eq:22}
    \bar{g}_{x_0,t} := g^0_{x_0,t} * \hat{g}_{x_0,t} * (g^1_{x_0,t})^{-1},
  \end{equation}
  we get a piecewise smooth path in $\Mf$ from $g_0$ to
  $\chibar(B_\delta(x_0)) g_0 + \chi(B_\delta (x_0)) g_1$ that in
  continuous on $B_\delta(x_0)$.  (Here, $(g^1_{x_0,t})^{-1}$ denotes
  the path $g^1_{x_0,t}$ run through in reverse.)  Let us assume that
  $\bar{g}_{x_0,t}$ is parametrized such that it runs through
  $g^0_{x_0,t}$ for $t \in [-\epsilon, 0]$, then $\hat{g}_{x_0,t}$ for
  $t \in [0,1]$, and finally $(g^1_{x_0,t})$ for $t \in [1, 1 +
  \epsilon]$.

  Denote by $g_{x_0,t}$ be the path obtained from $\bar{g}_{x_0,t}$ by
  pointwise reparametrizing each portion of the concatenation
  \eqref{eq:22} proportionally to arc length.
        Then by Lemma \ref{lem:6} and the statements following \eqref{eq:5},
  \eqref{eq:6}, and \eqref{eq:27}, $g_{x_0,t}$ is a piecewise smooth
  path in $\Mf$ that is continuous when restricted to $B_\delta(x_0)$,
  and by construction $g_{x_0,t}(x) = g_0(x)$ for all $t \in
  [-\epsilon,1+\epsilon]$ if $x \not\in B_\delta(x_0)$.  For $x \in
  B_\delta(x_0)$, the estimates (\ref{eq:5}) and (\ref{eq:6}) give
  \begin{equation*}
    \abs{g'_{x_0,t}(x)}_{g_{x_0,t}} <
    \begin{cases}
      1, & t \in [-\epsilon, 0) \cup [1, 1 + \epsilon], \\
      d_{x_0}(g_0(x_0), g_1(x_0)) + 2 \epsilon, & t \in [0,1).
    \end{cases}
  \end{equation*}
  
  Finally, since $g_0$ and $g_1$ are smooth metrics, the function $x
  \mapsto d_x(g_0(x), g_1(x))$ is continuous.  Therefore, we may
  assume that $\delta$ is small enough that $d_{x_0}(g_0(x_0),
  g_1(x_0)) < d_x(g_0(x), g_1(x)) + \epsilon$ for all $x \in
  B_\delta(x_0)$.  This and the above inequality show that $g_{x_0,t}$
  has all the desired properties.
\end{proof}

\begin{lemma}\label{lem:5}
  Let any $g_0, g_1 \in \M$ and $\epsilon > 0$ be given, and let
  $\delta = \delta(\epsilon, g_0, g_1) > 0$ be as in Lemma
  \ref{lem:3}.  Consider a finite collection of closed subsets $\{F_i
  \mid i = 1, \dots, N\}$ with the property that for each $i$, there
  exists $x_i \in F_i$ such that $F_i \subseteq B_{\delta'}(x_i)$ for
  some $0 < \delta' < \delta$, and such that $F_i \cap F_j =
  \emptyset$
    for all $i \neq j$.  We denote
  \begin{equation*}
    F := \bigcup_{i = 1, \dots, n} F_i.
  \end{equation*}

  Then there exists a path $\tilde{g}_t$, for $t \in [-\epsilon, 1 +
  \epsilon]$, from $g_0$ to $\tilde{g}_1 := \chibar(F) g_0 + \chi(F)
g_1$ satisfying
  \begin{equation}\label{eq:16}
    L(\tilde{g}_t)^2 < (1 + 2 \epsilon) \left[ \Omega_2(g_0, g_1)^2 + 6
        \epsilon \Omega_1(g_0, g_1) + 9 \epsilon^2 + 2
        \epsilon \right].
  \end{equation}
  Furthermore, $\tilde{g}_t$ satisfies the assumptions of Lemma
  \ref{lem:2}, and so also
  \begin{equation}\label{eq:17}
    d(g_0, \tilde{g}_1)^2 < (1 + 2 \epsilon) \left[ \Omega_2(g_0, g_1)^2 + 6
        \epsilon \Omega_1(g_0, g_1) + 9 \epsilon^2 + 2
        \epsilon \right].
  \end{equation}
\end{lemma}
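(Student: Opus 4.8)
The plan is to run, in parallel over the disjoint sets $F_i$, cut-off copies of the local deformations produced by Lemma~\ref{lem:3}, and then to invoke Lemma~\ref{lem:2} to convert path-length into distance. First, for each $i$ I apply Lemma~\ref{lem:3} at the point $x_i \in F_i$ with the given $\epsilon$, obtaining a piecewise smooth path $g_{x_i,t}$ in $\Mf$, $t \in [-\epsilon, 1+\epsilon]$, from $g_0$ to $\chibar(B_\delta(x_i))g_0 + \chi(B_\delta(x_i))g_1$ that equals $g_0$ off $B_\delta(x_i)$, is continuous on $B_\delta(x_i)$, and obeys the speed bound \eqref{eq:4} for $x \in B_\delta(x_i)$. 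Since each $F_i \subseteq B_{\delta'}(x_i) \subseteq B_\delta(x_i)$ is compact and the $F_i$ are pairwise disjoint, I fix $\rho > 0$ smaller than $\frac{1}{2}\min_{i\neq j} d_g(F_i, F_j)$ (the distance being measured in the reference metric $g$) and small enough that the $\rho$-neighbourhoods $F_i^\rho := \{x : d_g(x, F_i) < \rho\}$ have closures contained in $B_\delta(x_i)$; these neighbourhoods are then pairwise disjoint, and each $g_{x_i,t}$ is continuous and positive definite on $\overline{F_i^\rho}$. Finally I choose continuous cutoffs $\psi_i \colon M \to [0,1]$ with $\psi_i \equiv 1$ on $F_i$ and $\psi_i \equiv 0$ off $F_i^\rho$.

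Now set
\[
  h_t := g_0 + \sum_i \psi_i\,(g_{x_i,t} - g_0), \qquad \tilde{g}_t := \chibar(F)\,g_0 + \chi(F)\,h_t .
\]
The summands $\psi_i(g_{x_i,t}-g_0)$ have pairwise disjoint supports, each contained in $B_\delta(x_i)$ and bounded near $\partial F_i^\rho$, so $h_t$ is continuous in $x$; on $F_i^\rho$ one has $h_t = (1-\psi_i)g_0 + \psi_i g_{x_i,t}$, a pointwise convex combination of positive-definite tensors and therefore positive definite, while $h_t = g_0$ off $\bigcup_i F_i^\rho$. Thus $h_t$ is a piecewise smooth path in $\MC$ with $h_{-\epsilon} = g_0$ and $h_{1+\epsilon} \in \MC$, and because $\psi_i \equiv 1$ on $F_i$ we have $h_t = g_{x_i,t}$ on $F_i$, so $h_{1+\epsilon} = g_1$ on $F = \bigcup_i F_i$. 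Hence $\tilde{g}_t$ runs from $\tilde{g}_{-\epsilon} = g_0$ to $\tilde{g}_{1+\epsilon} = \chibar(F)g_0 + \chi(F)g_1 = \tilde{g}_1$, and it is exactly of the form to which Lemma~\ref{lem:2} applies, with $E = F$ and the pair $g_0, h_{1+\epsilon} \in \MC$ (the proof of Lemma~\ref{lem:2} tolerates corners in $t$, since it only invokes the piecewise version Lemma~\ref{lem:7}). This gives $d(g_0, \tilde{g}_1) \le L(\tilde{g}_t)$, so \eqref{eq:17} will follow once \eqref{eq:16} is established.

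For the length bound, $\tilde{g}'_t = \chi(F)h'_t$ and $h_t = g_{x_i,t}$ on $F_i$, so
\[
  \norm{\tilde{g}'_t}_{\tilde{g}_t}^2 = \sum_i \integral{F_i}{}{\abs{g'_{x_i,t}(x)}_{g_{x_i,t}(x)}^2}{d\mu(x)} .
\]
By \eqref{eq:4}, for $t \in [-\epsilon, 0) \cup [1, 1+\epsilon]$ this is $< \mu(F) \le 1$, whereas for $t \in [0,1)$, expanding the square and enlarging the domain of integration from $F$ to $M$,
\[
  \norm{\tilde{g}'_t}_{\tilde{g}_t}^2 < \integral{F}{}{\bigl(d_x(g_0(x),g_1(x)) + 3\epsilon\bigr)^2}{d\mu} \le \Omega_2(g_0,g_1)^2 + 6\epsilon\,\Omega_1(g_0,g_1) + 9\epsilon^2 =: R ,
\]
using $\mu(M) = 1$; here one may assume $\mu(F) > 0$, since otherwise $L(\tilde{g}_t) = 0$ and \eqref{eq:16} is trivial, and then these inequalities are strict because \eqref{eq:4} is. Integrating $\norm{\tilde{g}'_t}_{\tilde{g}_t}$ over the three subintervals of lengths $\epsilon, 1, \epsilon$ gives $L(\tilde{g}_t) < 2\epsilon + \sqrt{R}$, and squaring followed by the elementary inequality $(2\epsilon + \sqrt{R})^2 \le (1+2\epsilon)(R+2\epsilon)$ --- which rearranges to $2\epsilon(1-\sqrt{R})^2 \ge 0$ --- yields
\[
  L(\tilde{g}_t)^2 < (1+2\epsilon)(R+2\epsilon) = (1+2\epsilon)\bigl[\Omega_2(g_0,g_1)^2 + 6\epsilon\,\Omega_1(g_0,g_1) + 9\epsilon^2 + 2\epsilon\bigr],
\]
which is \eqref{eq:16}; \eqref{eq:17} then follows from $d(g_0, \tilde{g}_1) \le L(\tilde{g}_t)$.

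The one place genuine care is required is the construction of $h_t$: Lemma~\ref{lem:3} only guarantees continuity of $g_{x_i,t}$ on the \emph{closed} ball $B_\delta(x_i)$ --- it will in general jump across $\partial B_\delta(x_i)$ --- and the enclosing balls $B_{\delta'}(x_i)$ may overlap even though the $F_i$ do not. Splicing these local deformations into a single globally continuous, positive-definite path in $\MC$ that still restricts correctly to every $F_i$ is the heart of the matter; the disjoint-$\rho$-neighbourhood-and-cutoff device above accomplishes this, using that disjoint compacta in $M$ lie at positive distance and that convex combinations of positive-definite symmetric tensors remain positive definite. The rest is Fubini and Hölder-type bookkeeping together with the elementary algebra above.
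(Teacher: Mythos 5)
Your proposal is correct and follows essentially the same route as the paper: apply Lemma \ref{lem:3} at each $x_i$, splice the local deformations into a single continuous path in $\MC$ to which Lemma \ref{lem:2} applies with $E = F$, and then run the same energy/H\"older estimate using the speed bound \eqref{eq:4} together with $\Vol(M,\mu)=1$. The only difference is the splicing device --- the paper freezes the motion near $\partial B_\eta(F_i)$ via the pointwise time-reparametrization $s_i(x,t)$, while you interpolate spatially by a cutoff convex combination with $g_0$ --- and since $\tilde{g}_t$ is cut down to $F$ in either case, this has no effect on the final bound.
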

\begin{proof}
  For each $i \in \N$, let $g_{i,t} := g_{x_i,t}$ be the path from
  $g_0$ to $\chibar(B_\delta(x_i)) g_0 + \chi(B_\delta (x_i)) g_1$
  guaranteed by Lemma \ref{lem:3}.  Then for each $x \in
  B_\delta(x_i)$, we have
  \begin{equation}\label{eq:10}
    \abs{g'_{i,t}(x)}_{g_{x_0,t}} <
    \begin{cases}
      1, & t \in [-\epsilon, 0) \cup [1, 1 + \epsilon], \\
      d_{x}(g_0(x), g_1(x)) + 3 \epsilon, & t \in [0,1).
    \end{cases}
  \end{equation}
          Additionally, $g_{i,t}(x)$ is constant in
  $t$ for $x \notin B_\delta(x_i)$.
                    
  Since the sets $F_i$ are pairwise disjoint and closed, we can
  find $\eta > 0$ such that the closed subsets
  \begin{equation*}
    B_\eta(F_i) = \{ x \in M \mid \dist_g(x, F_i) \leq \eta \}
  \end{equation*}
  are still pairwise disjoint.  (Here, $\dist_g$ denotes the distance
  function of the Riemannian metric $g$ on $M$.)  Since $F_i \subseteq B_{\delta'}(x_i)$
  for some $0 < \delta' < \delta$, we may also choose $\eta$ small
  enough that $B_\eta(F_i) \subseteq B_\delta(x_i)$ for all $i$.

  Now, for each $i$, we define a continuous function for $x
  \in B_\eta(F_i)$ by
  \begin{equation*}
    s_i(x, t) :=
    \left(
      \frac{\eta - \dist_g(x, F_i)}{\eta}
    \right) (t + \epsilon) - \epsilon,
  \end{equation*}
  so that $s_i(x,-\epsilon) \equiv -\epsilon$.  Furthermore, $s_i(x,t)
  = t$ for all $x \in F_i$ and $t \in [-\epsilon, 1 + \epsilon]$, and
  $s_i(x,t) = -\epsilon$ for all $t$ if $x \in \partial B_\eta(F_i)$.
  We define a smooth path in $\MC$ as follows:
  \begin{equation*}
    \bar{g}_t(x) :=
    \begin{cases}
      g_{i,t}(x), & x \in F_i, \\
      g_{i, s_i(x,t)}(x) & x \in B_\eta(F_i), \\
      g_0(x), & x \notin \cup_i B_\eta(F_i),
    \end{cases} \qquad \textnormal{for}\ t \in [-\epsilon, 1 + \epsilon].
  \end{equation*}

  With this definition, we can see that the path $\tilde{g}_t :=
  \chibar(F) g_0 + \chi(F) \bar{g}_t$ satisfies the assumptions of
  Lemma \ref{lem:2}, and hence $d(g_0, \tilde{g}_1) \leq
  L(\tilde{g}_t)$.  We claim that (\ref{eq:16}) and hence
  (\ref{eq:17}) hold as well.

  To see this, note that $\abs{\tilde{g}'_t(x)}_{\tilde{g}_t(x)} =
  \abs{g'_{i,t}(x)}_{g_{i,t}}$ for all $x \in F$.  Therefore,
  using \eqref{eq:10}, we can estimate
  \begin{equation}\label{eq:19}
    \begin{aligned}
      L(\tilde{g}_t)^2 &\leq (1 + 2\epsilon) E(\tilde{g}_t) = (1 +
      2\epsilon) \sum_{i=1}^N
      \integral{-\epsilon}{1 + \epsilon}{\integral{F_i}{}{\abs{\tilde{g}'_t(x)}_{\tilde{g}_t(x)}^2}{d
          \mu}}{dt} \\
      &< (1 + 2\epsilon) \left[ \sum_{i=1}^N
        \integral{0}{1}{\integral{F_i}{}{(d_{x}(g_0(x), g_1(x)) + 3
            \epsilon)^2}{d \mu}}{dt} + \sum_{i=1}^N
        \integral{[-\epsilon, 0) \cup [1, 1 +
          \epsilon]}{}{\integral{F_i}{}{}{d \mu}}{dt} \right] \\
      &= (1 + 2\epsilon) \left[ \sum_{i=1}^N
        \integral{F_i}{}{d_{x}(g_0(x), g_1(x))^2}{d \mu} \right.
        \\
        &\qquad \qquad \qquad \left. + 6 \epsilon \cdot \sum_{i=1}^N
        \integral{F_i}{}{d_{x}(g_0(x), g_1(x))}{d \mu} + (9 \epsilon^2
        + 2 \epsilon) \sum_{i=1}^N
        \integral{F_i}{}{}{d \mu} \right] \\
      &\leq (1 + 2 \epsilon) \left[ \Omega_2(g_0, \tilde{g}_1)^2 + 6
        \epsilon \Omega_1(g_0, \tilde{g}_1) + 9 \epsilon^2 + 2
        \epsilon \right].
    \end{aligned}
  \end{equation}
  The last line follows by the formulas for $\Omega_1$ and $\Omega_2$,
  as well as the fact that $\Vol(M, \mu) = 1$.

  Finally, we note that $\Omega_1(g_0, \tilde{g}_1) \leq \Omega_1(g_0,
  g_1)$ and $\Omega_2(g_0, \tilde{g}_1) \leq \Omega_2(g_0, g_1)$ since
  $\tilde{g}_1$ equals $g_1$ on $F$ and $g_0$ everywhere else.  Thus
  (\ref{eq:19}) in fact implies (\ref{eq:16}).
\end{proof}

We now have all the pieces necessary to prove the main result of this
section.

\begin{theorem}\label{thm:2}
  $d(g_0, g_1) = \Omega_2(g_0, g_1)$ for all $g_0, g_1 \in \M$.
\end{theorem}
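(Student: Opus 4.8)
The plan is to prove the two inequalities $d(g_0,g_1) \leq \Omega_2(g_0,g_1)$ and $d(g_0,g_1) \geq \Omega_2(g_0,g_1)$ separately. The second inequality is already done: it is exactly the statement of Theorem \ref{thm:1}, applied in the present setting where $\E = \M$. So the entire content of the theorem is the upper bound $d(g_0,g_1) \leq \Omega_2(g_0,g_1)$, and for this I would lean on the machinery built up in Lemma \ref{lem:5}.

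The key point is that Lemma \ref{lem:5} gives, for every $\epsilon > 0$ and every finite disjoint family $\{F_i\}$ of closed sets each small enough to sit inside a ball $B_{\delta'}(x_i)$ with $\delta' < \delta(\epsilon,g_0,g_1)$, a bound of the form
\begin{equation*}
  d(g_0, \chibar(F)g_0 + \chi(F)g_1)^2 < (1 + 2\epsilon)\left[\Omega_2(g_0,g_1)^2 + 6\epsilon\,\Omega_1(g_0,g_1) + 9\epsilon^2 + 2\epsilon\right],
\end{equation*}
where $F = \bigcup_i F_i$. So the strategy is: (i) cover $M$ up to a set of arbitrarily small $\mu$-measure by finitely many pairwise disjoint closed sets $F_i$, each contained in a small geodesic ball, so that $F = \bigcup F_i$ has $\mu(M \setminus F) < \rho$ for a chosen small $\rho$; (ii) apply Lemma \ref{lem:5} to get control on $d(g_0, \tilde g_1)$ where $\tilde g_1 = \chibar(F)g_0 + \chi(F)g_1$ agrees with $g_1$ on $F$ and with $g_0$ off $F$; (iii) use Proposition \ref{prop:3} to bound $d(\tilde g_1, g_1)$ by something like $C(n)(\sqrt{\Vol(M\setminus F, \tilde g_1)} + \sqrt{\Vol(M\setminus F, g_1)})$, which is small since $g_0, g_1$ are smooth (hence have bounded volume density) and $\mu(M\setminus F)$ is small; (iv) combine via the triangle inequality $d(g_0,g_1) \leq d(g_0,\tilde g_1) + d(\tilde g_1, g_1)$; (v) let $\rho \to 0$ and then $\epsilon \to 0$. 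Since $\Omega_1(g_0,g_1) \leq \Omega_2(g_0,g_1) < \infty$ for smooth metrics (Hölder, using $\Vol(M,\mu)=1$), the error terms on the right-hand side of the Lemma \ref{lem:5} bound all vanish as $\epsilon \to 0$, leaving $d(g_0,g_1)^2 \leq \Omega_2(g_0,g_1)^2$.

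For step (i), the construction of the $F_i$: take a fine triangulation of $M$, or cover $M$ by finitely many small balls of radius $< \delta'$ and then disjointify — e.g.\ take $F_1 = \overline{B_1}$, $F_2 = \overline{B_2 \setminus B_1}$, etc., but since these need to be \emph{closed} and \emph{disjoint} and each inside a ball, one should instead shrink slightly: replace each ball by a slightly smaller closed ball so that, after disjointifying, the $F_i$ remain closed, pairwise disjoint, each inside a $\delta'$-ball, and their union misses only a set of measure $< \rho$. This is the kind of elementary covering argument that is standard but needs to be spelled out carefully; I expect it to be the main technical obstacle, mostly bookkeeping rather than anything deep. (One must also double-check that Lemma \ref{lem:5} was stated for closed $F_i$ with $F_i \subseteq B_{\delta'}(x_i)$, which it is.)

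The main obstacle, then, is not any single hard estimate but the orchestration: making sure the two small parameters $\rho$ (size of the leftover set $M\setminus F$) and $\epsilon$ (from Lemma \ref{lem:3}/\ref{lem:5}) are managed in the right order, and confirming that Proposition \ref{prop:3}'s hypothesis is met — namely that $\tilde g_1$ and $g_1$ both lie in $\Mf$ with controlled volume density on $M\setminus F$. Since $g_0,g_1 \in \M$ are smooth on a compact manifold, they lie in a common amenable set $\U$, and $\tilde g_1$, being a pointwise combination of $g_0$ and $g_1$, lies in $\U^0$ by Remark \ref{rmk:1}\eqref{item:5}; so Lemma \ref{lem:10} gives the needed uniform bound $\mu_{\tilde g_1}/\mu \leq K$, and the volume of $M\setminus F$ with respect to either metric is $\leq K\rho$. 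Feeding this back, $d(\tilde g_1,g_1) \leq 2C(n)\sqrt{K\rho} \to 0$, and the proof closes.
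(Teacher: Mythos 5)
Your proposal is correct and takes essentially the same route as the paper: Theorem \ref{thm:1} gives $d \geq \Omega_2$, and the reverse inequality is obtained exactly as you describe, by covering $M$ up to a small leftover set with pairwise disjoint closed sets $F_i$ inside $\delta'$-balls (the paper disjointifies by taking closed balls of shrunken radii $\delta_i < \delta'$ minus the preceding open $\delta'$-balls, which is your ``shrink slightly'' fix), feeding this into Lemma \ref{lem:5}, and controlling $d(\tilde g_1, g_1)$ via Proposition \ref{prop:3} before letting the parameters go to zero. The only immaterial difference is bookkeeping: you make the leftover set small in $\mu$-measure and convert to intrinsic volume via smoothness/amenability, whereas the paper chooses the radii $\delta_i$ so that the $g_0$- and $g_1$-volumes of the leftover set are small directly.
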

\begin{proof}
  We have already shown in Theorem \ref{thm:1} that $d(g_0, g_1) \geq
  \Omega_2(g_0, g_1)$, so it only remains to show the reverse
  inequality.

  Let any $\epsilon > 0$ be given, and let $\delta = \delta(\epsilon,
  g_0, g_1)$ be the number guaranteed by Lemma \ref{lem:3}.
  
  Choose $0 < \delta' < \delta$ and a finite number of points $x_i \in
  M$, $i = 1, \dots, N$, such that $B_i :=
  \interior(B_{\delta'}(x_i))$ cover $M$.  (Here, $\interior$ denotes
  the interior of a set.)  For each $i$, we choose
    $0 < \delta_i < \delta'$
  such that
  \begin{equation}\label{eq:12}
    \max
    \left\{
      \Vol(B_i \setminus B_{\delta_i}(x_i), g_0), \Vol(B_i \setminus
      B_{\delta_i}(x_i), g_1) 
    \right\} < \frac{\epsilon}{2^N - 1}.
  \end{equation}
  
  We then let
    $F_1 := B_{\delta_1}(x_1)$.  For
  each $i = 2, \dots, N$, define
  \begin{equation*}
    F_i := B_{\delta_i}(x_i) \setminus \bigcup_{j < i} B_j.
  \end{equation*}
  We wish to see that the sets $F_i$ cover $M$ up to a set of measure
  $\epsilon$, intrinsically with respect to both $g_0$ and $g_1$. By
  (\ref{eq:12}), for $\alpha = 0,1$,
  \begin{equation*}
    \Vol(F_1, g_\alpha) \geq \Vol(B_1, g_\alpha) - \frac{\epsilon}{2^N - 1}.
  \end{equation*}
  To estimate $\Vol(F_1 \cup F_2, g_\alpha)$,
  note that
  \begin{equation*}
    F_1 \cup F_2 = B_{\delta_1}(x_1) \cup (B_{\delta_2}(x_2) \setminus
    B_1) = B_{\delta_1}(x_1) \cup (B_{\delta_2}(x_2) \setminus
    (B_1 \setminus B_{\delta_1}(x_1))).
  \end{equation*}
  The first set in the union on the right-hand side is completely
  contained in $B_1$, and the second set is completely contained in
  $B_2$.  Furthermore, they are disjoint.  Therefore, again using
  (\ref{eq:12}),
  \begin{equation}
    \begin{aligned}
      \Vol(F_1 \cup F_2, g_\alpha) &=
      \Vol(B_{\delta_1}(x_1), g_\alpha) + \Vol(B_{\delta_2}(x_2)
      \setminus (B_1 \setminus B_{\delta_1}(x_1))) \\
      &\geq
      \left(
        \Vol(B_1, g_\alpha) - \frac{\epsilon}{2^N - 1}
      \right) +             \left(
        \Vol(B_{\delta_2}(x_2)) - \Vol(B_1 \setminus B_{\delta_1}(x_1), g_\alpha)
      \right) \\
      &\geq \left(
        \Vol(B_1, g_\alpha) - \frac{\epsilon}{2^N - 1}
      \right) +
      \left(
        \Vol(B_{\delta_2}(x_2)) - \frac{\epsilon}{2^N - 1}
      \right) \\
      &\geq \left(
        \Vol(B_1, g_\alpha) - \frac{\epsilon}{2^N - 1}
      \right) +
      \left(
        \Vol(B_2, g_\alpha) - 2 \cdot \frac{\epsilon}{2^N - 1}
      \right) \\
      &\geq \Vol(B_1 \cup B_2, g_\alpha) - (1 + 2) \frac{\epsilon}{2^N - 1}.
    \end{aligned}
  \end{equation}
  If we continue in this way, we find that for $F := \cup_i F_i$,
  \begin{equation}\label{eq:14}
    \Vol(F, g_\alpha) \geq \Vol(M, g_\alpha) -
    \left(
      \sum_{j = 0}^{N - 1} 2^j
    \right) \frac{\epsilon}{2^N - 1} = \Vol(M, g_\alpha) - \epsilon,
  \end{equation}
  where we recall that $\cup_i B_i = M$.

  Now, note that as defined, the sets $F_i$ satisfy the assumptions of
  Lemma \ref{lem:5}.  Let $\tilde{g}_t$ and $\tilde{g}_1$ be as in
  the lemma.  Then we have that
  \begin{equation}\label{eq:13}
    d(g_0, \tilde{g}_1)^2 < (1 + 2 \epsilon) \left[ \Omega_2(g_0, g_1)^2 + 6
        \epsilon \Omega_1(g_0, g_1) + 9 \epsilon^2 + 2
        \epsilon \right].
  \end{equation}
  On the other hand, $\tilde{g}_1$ and $g_1$ differ only on $M
  \setminus F$, where $\tilde{g}_1 = g_0$.  Thus, by (\ref{eq:14}) and
  Proposition \ref{prop:3}, we have
  \begin{equation}
    \label{eq:15}
    d(\tilde{g}_1, g_1) \leq C(n)
    \left(
      \sqrt{\Vol(M \setminus F, g_0)} + \sqrt{\Vol(M \setminus F, g_1)}
    \right) < 2 C(n) \sqrt{\epsilon}.
  \end{equation}
  Applying the triangle inequality to (\ref{eq:13}) and (\ref{eq:15}),
  we obtain
  \begin{equation*}
    d(g_0, g_1) < \sqrt{(1 + 2 \epsilon) \left[ \Omega_2(g_0, \tilde{g}_1)^2 + 6
        \epsilon \Omega_1(g_0, \tilde{g}_1) + 9 \epsilon^2 + 2
        \epsilon \right]} + 2 C(n) \sqrt{\epsilon}.
  \end{equation*}
  Sending $\epsilon \rightarrow 0$ then gives the desired result,
  $d(g_0, g_1) \leq \Omega_2(g_0, g_1)$.
\end{proof}

Since the completion of $(\M, d)$ is already known, the previous
theorem implies that the $L^2$ completion of $\M$ (in the sense
discussed following \eqref{eq:25}) is given by $\Mfhat$.

\section{Minimal paths in $\overline{\M}$ and $\overline{\Matx}$}
\label{sec:geod-exist-uniq}

In this section, we use the formula $d = \Omega_2$, together with an
analysis of the geometry of the fiber spaces $(\Matx, \langle \cdot,
\cdot \rangle)$, to obtain results about geodesics in $\overline{\M}$.
(We will use the terms \emph{geodesic} and \emph{minimal path}
interchangeably to mean a path whose length minimizes the distance
globally, i.e., between any two of its points.  If we are referring to
geodesics in the sense of Riemannian geometry, we will refer to
\emph{Riemannian geodesics}.)  Of course, as the completion of the
path metric space $(\M, d)$, $\overline{\M}$ is itself a complete path
metric space \cite[3.6(3)]{Bridson:1999tu}.  (By \emph{path metric
  space}, we mean that the distance between points is equal to the
infimum of the lengths of paths between those points.  Some authors
refer to this as an inner or intrinsic metric space.)  However, since
$\overline{\M}$ is not locally compact, completeness is no guarantee
that minimal paths exists between arbitrary points---even in the
Riemannian case, this does not hold, as an example of McAlpin shows
\cite[Sect.~I.E]{mcalpin65:_infin_dimen_manif_and_morse_theor} (see
also \cite[Sect.~VIII.6]{lang95:_differ_and_rieman_manif}).

Nevertheless, we will show that a unique geodesic exists between any
two points, and we can give an explicit and easily computable formula
for this geodesic.  Our analysis of the geometry of $\Matx$ builds
upon the foundation set up by Freed--Groisser
\cite{freed89:_basic_geomet_of_manif_of} and Gil-Medrano--Michor
\cite{gil-medrano91:_rieman_manif_of_all_rieman_metric}.

To begin this section, we give some estimates for the distance
function $d_x$ of $(\Matx, \langle \cdot, \cdot \rangle)$, and
determine the completion $\overline{(\Matx, d_x)}$.  Following that,
we summarize the work of Freed--Groisser and Gil-Medrano--Michor
mentioned above.  We use that work to help determine minimal paths
between points of $\overline{(\Matx, d_x)}$, and show that these exist
and are unique.  Following that, especially for use in Section
\ref{sec:cat0-property}, we show that these minimal paths in
$\overline{\Matx}$ vary continuously with their endpoints.  Finally,
we use the preceding results and Theorem \ref{thm:2} to give explicit
formulas for the distance function and minimal paths of
$\overline{\M}$.

\subsection{The metric $d_x$ on $\Matx$}
\label{sec:metric-d_x-matx}

We begin our investigation of $d_x$ by giving estimates of it from
below and above, which allows us to determine the completion of
$(\Matx, d_x)$.

Given a tensor $a \in \satx$, we have as before $A = g(x)^{-1} a$.
For positive semi-definite $a$, we will denote by $\sqrt{A}$ the
square root of the determinant of $A$.  Similarly, $\sqrt[4]{A}$
simply denotes $\sqrt[4]{\det A}$.  Note that these quantities are
coordinate-independent since $A$ is an endomorphism of $T_x M$.

Our first result bounds $d_x$ from below based on the determinants of
two given elements, and is the pointwise analog of \cite[Lemma
12]{clarke:_metric_geomet_of_manif_of_rieman_metric}.  It will come in
useful when showing that given paths are minimal in $\Matx$.

\begin{lemma}\label{lem:8}
  Let $a_0, a_1 \in \Matx$.  Then
  \begin{equation*}
    d_x(a_0, a_1) \geq \frac{4}{\sqrt{n}} \abs{\sqrt[4]{A_1} -
      \sqrt[4]{A_0}}.
  \end{equation*}
  In particular the functions $a \mapsto \det A$ and $a \mapsto
  \sqrt[4]{A}$ are continuous and Lipschitz continuous, respectively,
  on $(\Matx, d_x)$.
\end{lemma}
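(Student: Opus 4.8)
The plan is to reduce the estimate to a one-dimensional calculation along a shortest path and then apply the triangle inequality for the real-valued function $a \mapsto \sqrt[4]{\det A}$. First I would take an arbitrary piecewise differentiable path $a_t$, $t \in [0,1]$, in $\Matx$ from $a_0$ to $a_1$ and bound its length from below. Writing $A_t = g(x)^{-1} a_t$ and using the definition $\langle b, c\rangle_a = \tr_a(bc)\sqrt{\det A}$, the integrand in $L(a_t)$ is $\abs{a'_t}_{a_t} = \bigl(\tr_{a_t}((a'_t)^2)\bigr)^{1/2}\,(\det A_t)^{1/4}$. The key pointwise (in $t$) inequality to establish is
\begin{equation*}
  \abs{a'_t}_{a_t} \geq \frac{4}{\sqrt{n}}\,\left| \frac{d}{dt}\left( (\det A_t)^{1/4}\right)\right|.
\end{equation*}
This is a linear-algebra computation: one has $\frac{d}{dt}\det A_t = (\det A_t)\,\tr(A_t^{-1} A'_t) = (\det A_t)\,\tr_{a_t}(a_t a'_t)$ (raising indices with $g(x)$), so $\frac{d}{dt}(\det A_t)^{1/4} = \tfrac14 (\det A_t)^{1/4}\,\tr_{a_t}(a_t a'_t)$. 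By the Cauchy--Schwarz inequality for the scalar product $\tr_{a_t}(\,\cdot\,\cdot\,)$ on symmetric tensors, $|\tr_{a_t}(a_t a'_t)| \leq \bigl(\tr_{a_t}(a_t^2)\bigr)^{1/2}\bigl(\tr_{a_t}((a'_t)^2)\bigr)^{1/2}$, and $\tr_{a_t}(a_t^2) = \tr(\mathrm{Id}) = n$. Combining these gives $\bigl|\tfrac{d}{dt}(\det A_t)^{1/4}\bigr| \leq \tfrac{\sqrt n}{4}\,(\det A_t)^{1/4}\bigl(\tr_{a_t}((a'_t)^2)\bigr)^{1/2} = \tfrac{\sqrt n}{4}\,\abs{a'_t}_{a_t}$, which is the desired inequality.

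Integrating over $t \in [0,1]$ and using $\bigl|\int_0^1 \tfrac{d}{dt}(\det A_t)^{1/4}\,dt\bigr| \geq \bigl|(\det A_1)^{1/4} - (\det A_0)^{1/4}\bigr|$ yields $L(a_t) \geq \tfrac{4}{\sqrt n}\bigl|\sqrt[4]{A_1} - \sqrt[4]{A_0}\bigr|$. Taking the infimum over all such paths gives $d_x(a_0,a_1) \geq \tfrac{4}{\sqrt n}\bigl|\sqrt[4]{A_1} - \sqrt[4]{A_0}\bigr|$, which is the main inequality. For the final sentence: Lipschitz continuity of $a \mapsto \sqrt[4]{\det A}$ on $(\Matx, d_x)$ is then immediate with constant $\tfrac{\sqrt n}{4}$, and continuity of $a \mapsto \det A$ follows since $t \mapsto t^4$ is locally Lipschitz (or directly: composition of the continuous map $\sqrt[4]{\det A}$ with $s \mapsto s^4$).

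I do not anticipate a serious obstacle here; the only mild subtlety is keeping the index-raising conventions straight (everything is done with the fixed reference metric $g(x)$, so that $\tr_{a}(bc)$, $\det A$, etc.\ are the coordinate-free quantities introduced earlier) and noting that the Cauchy--Schwarz step is valid because $\tr_a(\,\cdot\,\cdot\,)$ is a genuine positive-definite inner product on $\satx$ for $a$ positive definite. One should also remark that the computation is valid at each point where $a_t$ is differentiable, and the finitely many non-differentiable points do not affect the integral — exactly as in the smooth case. This is why the lemma is stated as the "pointwise analog" of the corresponding global result in \cite{clarke:_metric_geomet_of_manif_of_rieman_metric}, and the proof there can be transcribed essentially verbatim.
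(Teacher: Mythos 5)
Your proposal is correct and follows essentially the same route as the paper: bound $\bigl|\partial_t \sqrt[4]{\det A_t}\bigr|$ pointwise by $\tfrac{\sqrt n}{4}\abs{a'_t}_{a_t}$ along an arbitrary path, integrate, and take the infimum. The only cosmetic difference is that you obtain the key inequality $(\tr_{a}b)^2 \leq n\,\tr_{a}(b^2)$ via Cauchy--Schwarz against $a$ itself, whereas the paper gets it from the orthogonal decomposition into pure-trace and trace-free parts.
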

\begin{proof}
  The proof is essentially the same as \cite[Lemma
  12]{clarke:_metric_geomet_of_manif_of_rieman_metric}, but for
  completeness we include it here.
  
  First, let $a \in \Matx$, and suppose that $b \in T_a \Matx \cong
  \satx$.  Let $b_1$ be the pure-trace part of $b$ ($b_1 = \frac{1}{n}
  \tr_a b$) and $b_0$ be the trace-free part ($b_0 = b - b_1$).  It is
  easy to see that $\tr_a(b_0 b_1) = 0$, and therefore
  \begin{equation*}
    \tr_a(b^2) = \tr_a(b_0^2) + \tr_a(b_1^2) = \tr_a(b_0^2) +
    \frac{1}{n} (\tr_a b)^2. 
  \end{equation*}
  Since $\tr_a(b_0^2) \geq 0$, we can conclude that $(\tr_a b)^2 \leq
  n \tr_a(b^2)$.
  
  Let $a_t$, $t \in [0,1]$, be any path connecting $a_0$ and $a_1$.
  We can estimate
  \begin{align*}
    \partial_t \sqrt[4]{A_t} &= \frac{1}{4} (\det A_t)^{-3/4}
    \tr_{a_t} (a'_t) (\det A_t) = \frac{1}{4} \left( \tr_{a_t}(a'_t)^2 \sqrt{A_t}
    \right)^{1/2} \\
    &\leq \frac{1}{4} \left(
      n \tr_{a_t}((a'_t)^2) \sqrt{A_t}
    \right)^{1/2} = \frac{\sqrt{n}}{4} \norm{a'_t}_{a_t},
  \end{align*}
  where we have used the inequality of the last paragraph.
  Integrating this last estimate gives
  \begin{equation*}
    \sqrt[4]{A_1} - \sqrt[4]{A_0} = \integral{0}{1}{\partial_t
      \sqrt[4]{A_t}}{dt} \leq \frac{\sqrt{n}}{4}
    \integral{0}{1}{\norm{a'_t}_{a_t}}{dt} = \frac{\sqrt{n}}{4} L(a_t).
  \end{equation*}
  Since this holds for any path $a_t$ between $a_0$ and $a_1$, and we
  can just as easily exchange $a_0$ and $a_1$, the statement of the
  lemma is proved.
          \end{proof}

Now, let $a_1 \in \Matx$ and consider the path $a_t = t a_1$, $t \in
(0, 1]$.  It is straightforward to compute that $L(a_t) =
\frac{4}{\sqrt{n}} \sqrt[4]{A_1}$, and therefore we can think of the
zero tensor (denoted simply by $0$) as representing a point in the
completion of $(\Matx, d_x)$.  (We will make this more precise in
Proposition \ref{prop:1} below.)  The metric $d_x$ naturally extends
to a metric on the completion (which we again denote by $d_x$).  By
Lemma \ref{lem:8}, $a_t$ is minimal, so we have $d_x(a_1, 0) =
\frac{4}{\sqrt{n}} \sqrt[4]{A_1}$ for any $a_1 \in \Matx$.  Thus, by
an application of the triangle inequality, we have the following
``converse'' of Lemma \ref{lem:8}.  It is a pointwise analog of
Proposition \ref{prop:3}.

\begin{lemma}\label{lem:9}
  Let $a_0, a_1 \in \Matx$.  Then
  \begin{equation*}
    d_x(a_0, a_1) \leq \frac{4}{\sqrt{n}} \left( \sqrt[4]{A_0} +
      \sqrt[4]{A_1} \right). 
  \end{equation*}
\end{lemma}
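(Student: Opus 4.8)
The plan is to read off this bound from Lemma \ref{lem:8} together with the observation recorded just before the statement. Namely, for $a_1 \in \Matx$ the radial path $a_t = t\,a_1$, $t \in (0,1]$, has length $\frac{4}{\sqrt{n}}\sqrt[4]{A_1}$, and by Lemma \ref{lem:8} this path is minimal; hence the zero tensor $0$ represents a point of the completion $\overline{(\Matx, d_x)}$, and $d_x(a, 0) = \frac{4}{\sqrt{n}}\sqrt[4]{A}$ for every $a \in \Matx$. (This is precisely what will be formalized in Proposition \ref{prop:1}.)

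Granting this, the lemma follows in one line. Since $d_x$ extends to a genuine metric on $\overline{(\Matx, d_x)}$, the triangle inequality applied through the point $0$ gives
\begin{equation*}
  d_x(a_0, a_1) \leq d_x(a_0, 0) + d_x(0, a_1) = \frac{4}{\sqrt{n}}\sqrt[4]{A_0} + \frac{4}{\sqrt{n}}\sqrt[4]{A_1} = \frac{4}{\sqrt{n}}\left(\sqrt[4]{A_0} + \sqrt[4]{A_1}\right),
\end{equation*}
which is the assertion.

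If one prefers not to invoke the completion at this stage, the same estimate can be obtained by an approximation entirely within $\Matx$. Fix $s \in (0,1)$ and $\epsilon > 0$, and concatenate three paths: the radial contraction $a_0 \leadsto s\,a_0$, whose length is $\frac{4}{\sqrt{n}}\sqrt[4]{A_0}\,(1 - s^{n/4})$ by the computation in the paragraph preceding the lemma; a path from $s\,a_0$ to $s\,a_1$ of length at most $d_x(s\,a_0, s\,a_1) + \epsilon$; and the radial dilation $s\,a_1 \leadsto a_1$, of length $\frac{4}{\sqrt{n}}\sqrt[4]{A_1}\,(1 - s^{n/4})$. The scaling relation $\langle s\,b, s\,c\rangle_{s\,a} = s^{n/2}\langle b, c\rangle_a$ shows that dilation by $s$ multiplies lengths by $s^{n/4}$, hence $d_x(s\,a_0, s\,a_1) = s^{n/4}\,d_x(a_0, a_1)$, which is finite since $\Matx$ is a connected smooth Riemannian manifold. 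Letting first $\epsilon \to 0$ and then $s \to 0$ in the resulting bound yields $d_x(a_0, a_1) \leq \frac{4}{\sqrt{n}}\left(\sqrt[4]{A_0} + \sqrt[4]{A_1}\right)$.

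I do not expect any genuine obstacle here: all the substantive content — the lower bound of Lemma \ref{lem:8} and the length computation for the radial path — is already in hand, and what remains is a routine application of the triangle inequality. The only point deserving a word of care is the status of the cone point $0$ in $\overline{(\Matx, d_x)}$ and the value of $d_x(\cdot, 0)$; this is exactly the content of Proposition \ref{prop:1}, and in any case the second argument above avoids relying on it.
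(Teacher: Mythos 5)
Your first argument is exactly the paper's proof: the paper establishes the lemma by computing $L(t a_1) = \tfrac{4}{\sqrt{n}}\sqrt[4]{A_1}$ for the radial path, invoking Lemma \ref{lem:8} for its minimality so that $d_x(a, 0) = \tfrac{4}{\sqrt{n}}\sqrt[4]{A}$, and then applying the triangle inequality through the cone point $0$ in the completion, so that part is correct and essentially identical. Your second, scaling-based argument is also correct (the homogeneity $\langle s b, s c\rangle_{sa} = s^{n/2}\langle b, c\rangle_a$ does give $d_x(s a_0, s a_1) = s^{n/4} d_x(a_0, a_1)$, and finiteness of $d_x(a_0,a_1)$ lets you absorb that term), and it buys something real: it stays entirely inside $\Matx$ and so does not presuppose that all rays $t a_1$ determine one and the same completion point $[0]$ with the stated distance --- a fact the paper only formalizes in Proposition \ref{prop:1}, whose proof in turn cites this lemma. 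So the alternative route cleanly removes that mild interdependence, at the cost of a short extra computation.
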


We now wish to determine the completion of $(\Matx, d_x)$, which we
will do by comparison with another metric.  Consider the Riemannian
metric $\langle \cdot, \cdot \rangle^0$ on $\Matx$ given by
\begin{equation*}
  \langle b, c \rangle^0_{a} = \tr_a(bc).
\end{equation*}
This metric turns $\Matx$ into a complete Riemannian
manifold---indeed, into a symmetric space (see
\cite[Thm.~8.9]{ebin70:_manif_of_rieman_metric} or
\cite[Prop.~4.9]{clarke:_compl_of_manif_of_rieman_metric}).  Since the
scalar product $\langle \cdot, \cdot \rangle_a$ differs from $\langle
\cdot, \cdot \rangle^0_a$ only by the factor $\sqrt{A}$, one
reasonably suspects that the only points that could be missing from
the completion of $(\Matx, \langle \cdot, \cdot \rangle)$ are those
with determinant zero.  The next proposition confirms this hunch and
makes it rigorous.

\begin{proposition}\label{prop:1}
  The completion of $(\Matx, \langle \cdot, \cdot
  \rangle)$ can be identified with
  \begin{equation*}
    \overline{\Matx} \cong \grpquot{\cl(\Matx)}{\partial \Matx},
  \end{equation*}
  where $\cl(\Matx)$ denotes the topological closure of $\Matx$ as a
  subspace of $\satx$, and $\partial \Matx$ denotes the boundary in
  $\satx$.

  The topology is given by the following.  Given a sequence $\{a_k\}
  \subset \overline{\Matx}$, it converges to $a_0 \in \Matx$ if and
  only if it does so in the manifold topology, and it converges to
  $[0] \in \Matx$ (the equivalence class of the zero tensor) if and
  only if $\det A_k \rightarrow 0$.  In fact, $d_x(a, [0]) =
  \frac{4}{\sqrt{n}} \sqrt[4]{A}$ for any $a \in \overline{\Matx}$.
\end{proposition}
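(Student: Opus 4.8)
The plan is to realize the completion directly through $d_x$-Cauchy sequences. Throughout, let $d_x^0$ and $L^0$ denote the distance and length functionals of the complete metric $\langle\cdot,\cdot\rangle^0$, and recall that $\langle b,b\rangle_a=(\det A)^{1/2}\langle b,b\rangle^0_a$, so that $L(a_t)=\int_0^1(\det A_t)^{1/4}\|a'_t\|^0_{a_t}\,dt$ and, on any region $\{\det A\ge c\}$, $\|b\|^0_a\le c^{-1/4}\|b\|_a$. The starting point is a dichotomy: if $\{a_k\}\subset\Matx$ is $d_x$-Cauchy, then Lemma~\ref{lem:8} makes $\{\sqrt[4]{A_k}\}$ a Cauchy sequence in $\R_{\ge0}$, hence $\sqrt[4]{A_k}\to\ell$ for some $\ell\ge0$. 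If $\ell=0$ (equivalently $\det A_k\to0$), Lemma~\ref{lem:9} gives $d_x(a_k,a_l)\le\tfrac{4}{\sqrt n}(\sqrt[4]{A_k}+\sqrt[4]{A_l})\to0$, so every sequence with $\det A_k\to0$ — in particular every sequence in $\Matx$ converging in $\satx$ to a point of $\partial\Matx$ — is $d_x$-Cauchy, and any two such are equivalent. This forces all of $\partial\Matx$ to collapse to a single point $[0]$ of the completion, which is distinct from every $a\in\Matx$ since $d_x$ is a genuine metric there.

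The substantive case is $\ell>0$, where I claim $\{a_k\}$ converges in $\Matx$ in the manifold topology. The idea is that a near-minimizing path between two metrics whose determinant is bounded below cannot itself have a small determinant anywhere: given $k,l$ large and a path $\gamma$ from $a_k$ to $a_l$ with $L(\gamma)<2\epsilon$ (available since $d_x$ is a length metric), the bound $|\partial_t\sqrt[4]{\det\gamma(t)}|\le\tfrac{\sqrt n}{4}\|\gamma'(t)\|_{\gamma(t)}$ from the proof of Lemma~\ref{lem:8} keeps the total variation of $t\mapsto\sqrt[4]{\det\gamma(t)}$ below $\tfrac{\sqrt n}{4}L(\gamma)$, so, its endpoint values both being close to $\ell>0$, one gets $\det\gamma\ge c_1>0$ along $\gamma$ with $c_1$ independent of $k,l$. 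Then $d_x^0(a_k,a_l)\le L^0(\gamma)\le c_1^{-1/4}L(\gamma)<2c_1^{-1/4}\epsilon$, so $\{a_k\}$ is $d_x^0$-Cauchy and, by completeness of $(\Matx,\langle\cdot,\cdot\rangle^0)$, converges in the manifold topology to some $a_\infty\in\Matx$ (with $\det A_\infty=\ell^4>0$ by continuity of the determinant); since $d_x$ induces the manifold topology on the finite-dimensional Riemannian manifold $\Matx$, also $a_k\to a_\infty$ in $d_x$. I expect this case to be the only real obstacle, and this passage from $d_x$-control to $d_x^0$-control along a near-minimizer to be its crux.

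It then follows that the completion is $\Matx\sqcup\{[0]\}$ as a set, which we identify with $\cl(\Matx)/\partial\Matx$ in the evident way. For the distance formula, the explicit path $t\mapsto ta$, $t\in(0,1]$, which is minimal by Lemma~\ref{lem:8} and has $\langle\cdot,\cdot\rangle$-length $\tfrac{4}{\sqrt n}\sqrt[4]{A}$ and limit $[0]$, gives $d_x(a,[0])=\tfrac{4}{\sqrt n}\sqrt[4]{A}$ directly; alternatively Lemma~\ref{lem:9} supplies the upper bound and Lemma~\ref{lem:8}, extended to $\overline{\Matx}$ by continuity with $\sqrt[4]{\det}$ vanishing at $[0]$, the lower bound. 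The topology assertion is then immediate: for $a_0\in\Matx$, $a_k\to a_0$ in $d_x$ is equivalent to $a_k\to a_0$ in the manifold topology because $d_x$ metrizes it and because $d_x(a_0,[0])=\tfrac{4}{\sqrt n}\sqrt[4]{A_0}>0$ keeps $[0]$ at positive distance; and $a_k\to[0]$ in $d_x$ iff $\sqrt[4]{A_k}=\tfrac{\sqrt n}{4}d_x(a_k,[0])\to0$ iff $\det A_k\to0$.
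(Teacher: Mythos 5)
Your proposal is correct and follows essentially the same route as the paper: the same dichotomy on whether $\det A$ tends to zero, with Lemma \ref{lem:9} collapsing all degenerating sequences to the single point $[0]$, Lemma \ref{lem:8} forcing the determinant to stay bounded below in the nondegenerate case, and the comparison with the complete metric $\langle \cdot, \cdot \rangle^0$ on that region doing the decisive work. The only difference is bookkeeping---you phrase the completion via Cauchy sequences joined by near-minimizing paths, while the paper runs the standard finite-length-path construction---and the final distance formula $d_x(a,[0]) = \tfrac{4}{\sqrt{n}}\sqrt[4]{A}$ is obtained in both cases from the radial path $t \mapsto ta$ together with Lemma \ref{lem:8}.
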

\begin{proof}
  By the standard construction of the completion of a Riemannian
  manifold, we must consider all piecewise differentiable paths of the
  form $a_t$, $t \in[0,1)$, in $\Matx$ that have finite length with
  respect to $\langle \cdot, \cdot \rangle$ and show two facts.
  First, either $\lim_{t \rightarrow 1} a_t \in \Matx$ (in the
  topology of $\satx$) or $\lim_{t \to 1} \det A_t = 0$.  Second, if
  $\lim_{t \to 1} \det A_t = 0$ and $\tilde{a}_t$, $t \in [0,1)$, is
  another path in $\Matx$ satisfying $\lim_{t \to 1} \det \tilde{A}_t
  = 0$, then $a_t$ and $\tilde{a}_t$ are equivalent in the sense that
  $\lim_{t \to 1} d_x(a_t, \tilde{a}_t) = 0$.  (From these facts, the
  statements about the topology on $\overline{\Matx}$ follow
  immediately.)

  The second fact, however, is immediate from Lemma \ref{lem:9}.  So
  to prove the first fact, suppose we do not have $\lim_{t \to 1} \det
  A_t = 0$.  By Lemma \ref{lem:8}, one can easily see that $\det A_t$
  must nevertheless converge to some limit $\eta > 0$.  Furthermore,
  Lemma \ref{lem:8} implies that there exists $\epsilon > 0$ such that
  $\eta / 2 < \det A_t < 3 \eta / 2$ for all $t \in [1-\epsilon, 1)$.
  But since $\langle \cdot, \cdot \rangle$ is equivalent to $\langle
  \cdot, \cdot \rangle^0$ on the subset $\{ a \in \Matx \mid \eta / 2
  < \det A < 3 \eta / 2 \}$, the completeness of $\langle \cdot, \cdot
  \rangle^0$ implies that $\lim_{t \rightarrow 1} a_t \in \Matx$, as
  desired.

  The formula for $d_x(a, [0])$ holds by the discussion following
  Lemma \ref{lem:8}.
\end{proof}

As in the case of $\overline{\M}$, the completion $\overline{\Matx}$
together with the metric induced from $d_x$ (which we will again
denote by $d_x$) is a path metric space.  Furthermore, given
Proposition \ref{prop:1}, we see that Lemmas \ref{lem:8} and
\ref{lem:9} continue to hold if $a_0, a_1 \in \overline{\Matx}$, so
from now on we will assume the lemmas are stated as such.

\subsection{Riemannian geodesics in $\Matx$}
\label{sec:riem-geod-matx}

In this subsection, we recall some results on geodesics and the
exponential mappings for $(\Matx, \langle \cdot, \cdot \rangle)$.

As explained in \cite[Appendix]{freed89:_basic_geomet_of_manif_of},
the formulas for geodesics on $\Matx$ follow directly from those on
$\M$ determined by Freed--Groisser
\cite[Thm.~2.3]{freed89:_basic_geomet_of_manif_of} and
Gil-Medrano--Michor
\cite[Thm.~3.2]{gil-medrano91:_rieman_manif_of_all_rieman_metric}.
Namely, we have that $g_t$ is a geodesic in $(\M, (\cdot, \cdot))$ if
and only if $g_t(x)$ is a geodesic in $(\Matx, \langle \cdot, \cdot
\rangle)$.  Therefore, in the following we will quote formulas that
Freed--Groisser and Gil-Medrano--Michor formulated for $\M$,
translated into the result for $\Matx$.

For the remainder of the paper, we denote by $b_T := b - \frac{1}{n}
(\tr_{a_0} b) a_0$ the traceless part of any $b \in T_{a_0} \Matx
\cong \satx$.  Furthermore, in all that follows, $\exp$ without a
subscript denotes the usual exponential of a matrix or linear
transformation, while $\exp_{a_0}$ denotes the Riemannian exponential
mapping of $a_0 \in \Matx$.

\begin{theorem}\label{thm:4}
  Let $a_0 \in \Matx$ and $b \in T_{a_0} \Matx \cong \satx$.  Define 
  \begin{equation*} q(t) := 1 + \frac{t}{4} \tr_{a_0}(b), \quad r(t)
:= \frac{t}{4} \sqrt{n \tr_{a_0}(b_T^2)}.
  \end{equation*}
  Then the geodesic starting at $a_0$ with initial tangent $a'_0 = b$
  is given by
  \begin{equation}\label{eq:28}
    a_t =
    \begin{cases}
      \left( q(t)^2 + r(t)^2 \right)^{\frac{2}{n}} a_0 \exp \left(
        \frac{4}{\sqrt{n \tr(b_T^2)}} \arctan
        \left( \frac{r(t)}{q(t)} \right) b_T \right) &
      \textnormal{if}\ b_T
      \neq 0, \\
      q(t)^{4/n} a_0 & \textnormal{if}\ b_T = 0.
    \end{cases}
  \end{equation}
  In particular, the change in the volume element $\sqrt{A_t}$ is given by
  \begin{equation}\label{eq:21}
    \sqrt{A_t} = (q(t)^2 + r(t)^2) \sqrt{A_0}.
  \end{equation}
  
  For precision, we specify the range of $\arctan$ in the above.  At a
  point where $\tr_{a_0} b \geq 0$, it assumes values in
  $(-\frac{\pi}{2}, \frac{\pi}{2})$.  At a point where $\tr_{a_0} b <
  0$, $\arctan(r(t) / q(t))$ assumes values as follows, with $t_0 := -
  \frac{4}{\tr_{a_0} b}$:
  \begin{enumerate}
  \item in $[0, \frac{\pi}{2})$ if $0 \leq t < t_0$,
  \item in $(\frac{\pi}{2}, \pi)$ if $t_0 < t < \infty$,
  \end{enumerate}
  and we set $\arctan(r(t) / q(t)) = \frac{\pi}{2}$ if $t = t_0$.

  Finally, the geodesic is defined on the following domain.  If $b_T =
  0$ and $\tr_{a_0} b < 0$, then the geodesic is defined for $t \in
  [0, t_0)$.  Otherwise, the geodesic is defined on
  $[0, \infty)$.
\end{theorem}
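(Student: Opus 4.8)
The plan is to reduce the statement to the explicit integration of the geodesic equation on $(\M,(\cdot,\cdot))$ carried out by Freed--Groisser \cite{freed89:_basic_geomet_of_manif_of} and Gil-Medrano--Michor \cite{gil-medrano91:_rieman_manif_of_all_rieman_metric}, using their observation that $g_t$ is a geodesic of $\M$ if and only if $g_t(x)$ is a geodesic of $(\Matx,\langle\cdot,\cdot\rangle)$ for each $x$; this turns the problem into a finite-dimensional ODE in a single fibre. Before verifying the formula, I would record the structural reason it holds. Writing a positive tensor as $a = \rho\,\tilde a$ with $\rho := (\det A)^{1/n}$ the scaling part and $\tilde a$ the unimodular part ($\det\tilde A = 1$), and setting $w := \tfrac{4}{\sqrt n}\sqrt[4]{A}$, a short computation (the cross terms vanishing since $\tilde a$ has constant determinant) shows that in the coordinates $(w,\tilde a)$ one has $\langle\cdot,\cdot\rangle = dw^2 + w^2\bar h$, where $\bar h$ is a fixed positive multiple of $\langle\cdot,\cdot\rangle^0$ restricted to the unimodular slice $\{\det\tilde A = 1\}$, a totally geodesic copy of the symmetric space $SL(n,\R)/SO(n)$. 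Thus $(\Matx,\langle\cdot,\cdot\rangle)$ is a \emph{metric cone} over this symmetric space, with apex corresponding to the point $[0]$ of Proposition~\ref{prop:1} --- consistent with the formula $d_x(a,[0]) = \tfrac{4}{\sqrt n}\sqrt[4]{A}$ there, since $w$ is exactly the distance to the apex.

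From here I would invoke the classical description of geodesics in a metric cone: away from the apex a geodesic lies in the flat two-dimensional sub-cone over a geodesic $\sigma$ of the base, along which it is a Euclidean straight line in the meridian plane. Accordingly I would \emph{define} $q(t)$ and $r(t)$ to be the Cartesian coordinates of that straight line, normalised by the initial radius: $q$ measured along the initial radial ray, so that $q(0)=1$ and $q'(0) = w'(0)/w(0) = \tfrac14\tr_{a_0}(b)$; and $r$ measured transversally, so that $r(0)=0$ and $r'(0) = \tfrac14\sqrt{n\tr_{a_0}(b_T^2)}$ is the $\bar h$-norm of the unimodular part of the velocity $b$ --- here the identity $n\tr_{a_0}(b^2) = (\tr_{a_0}b)^2 + n\tr_{a_0}(b_T^2)$, already used in the proof of Lemma~\ref{lem:8}, makes the total speed come out correctly. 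The radius of the line is then $\sqrt{q(t)^2+r(t)^2}$ times the initial radius, which forces $\sqrt[4]{A_t} = \sqrt{q^2+r^2}\,\sqrt[4]{A_0}$ and hence \eqref{eq:21}; meanwhile $\sigma$ is a geodesic of $SL(n,\R)/SO(n)$ through the unimodular part $\tilde a_0$ of $a_0$ in the $b_T$-direction, namely $\tilde a_0\exp(s\,a_0^{-1}b_T)$, and the angle it subtends in the meridian plane equals $\arctan(r/q)$, which pins the parameter down to $s(t) = \tfrac{4}{\sqrt{n\tr_{a_0}(b_T^2)}}\arctan(r(t)/q(t))$. Reassembling $a_t = \rho_t\,\tilde a(t)$ with $\rho_t = \rho_0(q^2+r^2)^{2/n}$ gives precisely \eqref{eq:28} (the matrix exponential being taken of the $a_0$-self-adjoint endomorphism $a_0^{-1}b_T$); the case $b_T = 0$, with no transverse motion, is handled separately and reduces to $q(t)^{4/n}a_0$. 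That $a_0$ and $a'_0 = b$ are the correct initial data then follows from a one-line Taylor expansion, and \eqref{eq:21} can alternatively be read directly off $\det(a_0\exp X) = e^{\tr X}\det a_0$ with $\tr b_T = 0$.

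What is left is the domain of definition and the branch of $\arctan$, which is where the (modest) care goes. The right-hand side of \eqref{eq:28} is a positive scalar times $a_0$ composed with the exponential of an $a_0$-self-adjoint endomorphism, hence positive definite wherever it makes sense, so the geodesic remains in $\Matx$ as long as $q(t)^2 + r(t)^2 > 0$. If $b_T \neq 0$ then $r$ is strictly increasing, so the straight line $(q,r)$ never reaches the origin and the geodesic is defined for all $t\ge 0$; if $b_T = 0$ and $\tr_{a_0}b < 0$, the radius $q(t)^{4/n}$ vanishes at $t_0 = -4/\tr_{a_0}b$, leaving the domain $[0,t_0)$. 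In the remaining case $b_T\neq 0$, $\tr_{a_0}b < 0$ one has $q(t_0) = 0$ while $r(t_0) > 0$, and demanding that $t\mapsto a_t$ be smooth (indeed real-analytic) across $t_0$ forces $\arctan(r/q)$ to continue \emph{through} $\tfrac{\pi}{2}$ into $(\tfrac{\pi}{2},\pi)$ rather than jumping to $-\tfrac{\pi}{2}$; tracking this continuation yields exactly the stated range conventions. The main obstacle is therefore not a single hard step but the bookkeeping --- checking that the substitution $a\leftrightarrow(w,\tilde a)$ genuinely diagonalises $\langle\cdot,\cdot\rangle$, keeping straight which traces and matrix exponentials are relative to $a_0$ versus the reference metric $g$, and confirming that \eqref{eq:28} with the chosen branch really is one smooth curve through $t = t_0$. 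Since all of this is already contained in Freed--Groisser and Gil-Medrano--Michor, the most economical route is simply to quote their geodesic formula for $\M$ \cite[Thm.~2.3]{freed89:_basic_geomet_of_manif_of}, \cite[Thm.~3.2]{gil-medrano91:_rieman_manif_of_all_rieman_metric}, specialise it to a point, and rewrite it in the $(q,r)$-variables.
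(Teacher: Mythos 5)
Your proposal is correct, and its bottom line coincides with what the paper actually does: the paper offers no independent proof of this theorem, but (as stated just before it) quotes Freed--Groisser \cite[Thm.~2.3]{freed89:_basic_geomet_of_manif_of} and Gil-Medrano--Michor \cite[Thm.~3.2]{gil-medrano91:_rieman_manif_of_all_rieman_metric} and translates their formula for $\M$ into the fibrewise statement for $(\Matx,\langle\cdot,\cdot\rangle)$, exactly the ``most economical route'' your last paragraph settles on. What you add beyond the paper is the cone derivation, and it checks out: with $w=\tfrac{4}{\sqrt n}\sqrt[4]{A}$ the radial coordinate and the unimodular slice as base, the cross terms do vanish (radial vectors are multiples of $a$, slice vectors are $\tr_a$-traceless), the slice metric at radius $w$ is $w^2\cdot\tfrac{n}{16}\langle\cdot,\cdot\rangle^0$, the initial data come out as $q'(0)=\tfrac14\tr_{a_0}b$ and $r'(0)=\tfrac14\sqrt{n\tr_{a_0}(b_T^2)}$ via the identity $n\tr_{a_0}(b^2)=(\tr_{a_0}b)^2+n\tr_{a_0}(b_T^2)$, and the straight-line-in-the-meridian-plane picture reproduces \eqref{eq:28}, \eqref{eq:21}, the branch of $\arctan$ through $\tfrac{\pi}{2}$, and the domain statements. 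This route buys conceptual clarity that the citation does not: it explains where $q$, $r$ and $\sqrt{A_t}=(q^2+r^2)\sqrt{A_0}$ come from, recovers $d_x(a,[0])=\tfrac{4}{\sqrt n}\sqrt[4]{A}$ of Proposition \ref{prop:1} for free, and foreshadows the later structure of the paper (the concatenated segments through $[0]$ in Theorem \ref{thm:10}, Lemma \ref{lem:16}, and the CAT(0) analysis), at the cost of some bookkeeping the citation avoids. Two small cautions: the slice $\{\det\tilde A=1\}$ is totally geodesic for $\langle\cdot,\cdot\rangle^0$ but is a distance sphere about the apex for $\langle\cdot,\cdot\rangle$ (this does not affect your argument, since you only use the base geodesics $\tilde a_0\exp(s\,\tilde a_0^{-1}c)$, i.e.\ Theorem \ref{thm:8}); and the exponent in \eqref{eq:28} is to be read with the paper's convention $a_0\exp(a_0^{-1}\,\cdot\,)$, as you note.
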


We note here that if, in the above, $b_T = 0$, i.e., the initial
tangent vector of the geodesic is pure-trace, then the geodesic is a
certain parametrization of a straight ray for as long as it is
defined.

Gil-Medrano--Michor also performed a detailed analysis of the
exponential mapping of $\M$.  We quote here a portion of their
results, translated into the pointwise result for $\Matx$. 

\begin{theorem}[{\cite[\S 3.3, Thm.~3.4]{gil-medrano91:_rieman_manif_of_all_rieman_metric}}]\label{thm:7}
  Let $a_0 \in \Matx$ and $U := \satx \setminus (-\infty, -4 / n ]\,
  a_0$.  Then $U$ is the maximal domain of definition of $\exp_{a_0}$,
  and $\exp_{a_0}$ is a diffeomorphism between $U$ and
  \begin{equation*}
    V := \exp_{a_0}(U) =
    \left\{
      a_0 \exp (a_0^{-1} b) \midmid \tr_{a_0}(b_T^2) < \frac{(4 \pi)^2}{n}
    \right\}.
  \end{equation*}

  The inverse of $\exp_{a_0}$ is given by the following.  For
      $b \in \satx$, define
  \begin{equation}\label{eq:29}
    \psi(b) := 
    \begin{cases}
      \frac{4}{n}
      \left(
        \exp
        \left(
          \frac{\tr_{a_0} b}{4}
        \right) \cos
        \left(
          \frac{\sqrt{n \tr_{a_0}(b_T^2)}}{4}
        \right)
        -1
      \right) a_0 & \\
      \qquad + \frac{4}{\sqrt{n \tr_{a_0}(b_T^2)}} \exp
      \left(
        \frac{\tr_{a_0} b}{4}
      \right) \sin
      \left(
        \frac{\sqrt{n \tr_{a_0}(b_T^2)}}{4}
      \right) b_T
      & \textnormal{if}\ b_T \neq 0 \\
      \frac{4}{n}
      \left(
        \exp
        \left(
          \frac{\tr_{a_0} b}{4}
        \right) - 1
      \right) a_0 & \textnormal{if}\ b_T = 0.
    \end{cases}
  \end{equation}
  Now, if $a_1 \in V$, write (uniquely) $a_1 = a_0 \exp(a_0^{-1} b)$
  for some $b \in \satx$.  Then $\exp_a^{-1}(a_1) = \psi(b)$.
\end{theorem}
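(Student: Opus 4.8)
The plan is to derive the whole statement from the explicit geodesic formula of Theorem~\ref{thm:4}, evaluated at $t=1$, since by definition $\exp_{a_0}(b)$ is the time-one value of the geodesic with initial velocity $a_0' = b$.

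First, the maximal domain. By the last paragraph of Theorem~\ref{thm:4}, the geodesic with initial velocity $b$ fails to be defined at $t=1$ precisely when $b_T = 0$, $\tr_{a_0}b < 0$, and $t_0 = -4/\tr_{a_0}b \le 1$. But $b_T = 0$ means exactly $b = \tfrac1n(\tr_{a_0}b)\,a_0 \in \mathbb{R}\,a_0$, and then $t_0 \le 1 \iff \tr_{a_0}b \le -4 \iff b \in (-\infty,-4/n]\,a_0$. Hence the domain of $\exp_{a_0}$ is exactly $U = \satx \setminus (-\infty,-4/n]\,a_0$.

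Next I would rewrite the time-one formula \eqref{eq:28} in ``polar-type'' coordinates. Put $\theta(b) := \arctan(r(1)/q(1))$ (with the branch conventions of Theorem~\ref{thm:4}) and $P(b) := q(1)^2 + r(1)^2 = (1 + \tfrac14\tr_{a_0}b)^2 + \tfrac{n}{16}\tr_{a_0}(b_T^2)$; then $\exp_{a_0}(b) = P(b)^{2/n}\,a_0\exp(c(b)\,a_0^{-1}b_T)$ with $c(b) = 4\theta(b)/\sqrt{n\tr_{a_0}(b_T^2)}$. Since a positive scalar acts by $\lambda a_0 = a_0\exp((\log\lambda)\,\mathrm{Id})$ and $\mathrm{Id}$ commutes with $a_0^{-1}b_T$, one absorbs the scalar factor to get $\exp_{a_0}(b) = a_0\exp(a_0^{-1}\beta)$ with $a_0^{-1}\beta = \tfrac2n(\log P(b))\,\mathrm{Id} + c(b)\,a_0^{-1}b_T$; in particular $a_0^{-1}\beta_T = c(b)\,a_0^{-1}b_T$, so $\tr_{a_0}(\beta_T^2) = c(b)^2\tr_{a_0}(b_T^2) = \tfrac{16}{n}\theta(b)^2$. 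Using the branch conventions one checks that $\theta(b) \in [0,\pi)$ for every $b \in U$, and that the triple consisting of $\theta(b)$, $P(b)$, and the direction of $b_T$ ranges bijectively over $[0,\pi)\times(0,\infty)\times\{\text{unit traceless tensors}\}$ as $b$ runs over $U$. This yields $\tr_{a_0}(\beta_T^2) = \tfrac{16}{n}\theta(b)^2 < \tfrac{(4\pi)^2}{n}$ and shows conversely that every $a_1 = a_0\exp(a_0^{-1}\beta)$ with $\tr_{a_0}(\beta_T^2) < (4\pi)^2/n$ comes from a \emph{unique} $b \in U$; hence $\exp_{a_0}(U) = V$ and $\exp_{a_0}$ is injective.

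Finally, to obtain $\psi$: solving $\theta = \tfrac14\sqrt{n\tr_{a_0}(\beta_T^2)}$ and $P = \exp(\tfrac12\tr_{a_0}\beta)$ via the polar substitution $q(1) = \sqrt P\cos\theta$, $r(1) = \sqrt P\sin\theta$ gives $\tr_{a_0}b = 4(\sqrt P\cos\theta - 1)$ and $b_T = (\sqrt P\sin\theta/\theta)\,\beta_T$ (the case $\theta = 0$, i.e. $\beta_T = 0$, being read off by continuity), and inserting $\sqrt P = \exp(\tfrac14\tr_{a_0}\beta)$, $\theta = \tfrac14\sqrt{n\tr_{a_0}(\beta_T^2)}$ reproduces precisely formula \eqref{eq:29}. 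Since $\psi$ is manifestly smooth on $V$ (the apparent singularity at $\beta_T = 0$ is removable, as $\sin x/x \to 1$), $\exp_{a_0}$ is a diffeomorphism of $U$ onto $V$ with inverse $\psi$. The one genuinely delicate step is the bookkeeping of the $\arctan$ branches of Theorem~\ref{thm:4}, together with the separate treatment of the pure-trace locus $b_T = 0$ and the point $\tr_{a_0}b = -4$ (where $q(1) = 0$); everything else is formal manipulation of the explicit formulas.
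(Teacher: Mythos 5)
Your proposal is essentially correct, but be aware that the paper itself offers no proof of this statement: Theorem \ref{thm:7}, like Theorem \ref{thm:4}, is quoted from Gil-Medrano--Michor \cite{gil-medrano91:_rieman_manif_of_all_rieman_metric} and merely translated into the pointwise setting for $\Matx$. What you have done is reconstruct their computation from the quoted geodesic formula, which is a legitimate and self-contained route: identifying the maximal domain from the last paragraph of Theorem \ref{thm:4} (the geodesic fails to reach $t=1$ exactly when $b_T=0$, $\tr_{a_0}b\le -4$, i.e.\ $b\in(-\infty,-4/n]\,a_0$) is right, and so is the polar inversion --- I verified that with $\theta=\tfrac14\sqrt{n\tr_{a_0}(\beta_T^2)}\in[0,\pi)$ and $\sqrt P=\exp(\tfrac14\tr_{a_0}\beta)$ one gets $\tr_{a_0}(\beta_T^2)=\tfrac{16}{n}\theta^2<\tfrac{(4\pi)^2}{n}$, $\tr_{a_0}b=4(\sqrt P\cos\theta-1)$, and $b_T=\bigl(\sqrt P\sin\theta/\theta\bigr)\beta_T$, which is exactly \eqref{eq:29}, with the branch bookkeeping at $q(1)\le 0$ consistent with the conventions of Theorem \ref{thm:4}. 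Two points you leave implicit deserve explicit mention: (i) the claimed bijection with triples in $[0,\pi)\times(0,\infty)\times\{\textnormal{unit traceless tensors}\}$ degenerates at $\theta=0$, where pure-trace $b$ has no direction, so it should be phrased as a bijection for $\theta\in(0,\pi)$ together with the separate pure-trace family for $\theta=0$; and (ii) the existence, uniqueness and smoothness of the representation $a_1=a_0\exp(a_0^{-1}\beta)$ --- i.e.\ that the matrix exponential is a diffeomorphism from $a_0$-self-adjoint endomorphisms onto $a_0$-self-adjoint positive-definite ones --- is what justifies the word ``uniquely'' in the statement and is needed before you can say the inverse of $\exp_{a_0}$ is $\psi$ composed with this logarithm (the $\sin x/x$ singularity at $\beta_T=0$ being removable, as you note). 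With those two remarks added, your derivation stands on its own; what the paper's citation buys is brevity, and what your argument buys is that the statement becomes a corollary of Theorem \ref{thm:4} rather than an independent import from the literature.
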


In what follows, we will also require some facts about the exponential
mapping of certain submanifolds of $\Matx$, as defined below.

\begin{definition}\label{dfn:5}
  Let $a \in \Matx$.  We define
  \begin{equation*}
      \M_{x,\sqrt{A}} := \{ b \in \Matx \mid \sqrt{B} = \sqrt{A} \}.
  \end{equation*}
\end{definition}

Note that since the derivative of the map $b \mapsto \sqrt{B}$ is $c
\mapsto \frac{1}{2} \tr_b(c) \sqrt{B}$
\cite[Prop.~1.186]{Besse:2008vf} (cf.~the errata in the previous
reference), we have
\begin{equation*}
  T_b \M_{x, \sqrt{A}} = \{ c \in \satx \mid \tr_b c = 0 \}
\end{equation*}
for all $b \in \M_{x,\sqrt{A}}$.  Also, since for any $b \in \Matx$,
there exists a number $\lambda > 0$ such that $\sqrt{\lambda B} =
\sqrt{A}$, we have $\Matx \cong \R_+ \times \M_{x,\sqrt{A}}$.  This
decomposition is orthogonal with respect to $\langle \cdot, \cdot
\rangle$, since if $h = \lambda b$, $\lambda \in \R$, is tangent to
$\R_+ \cdot b$ and $k \in T_b \M_{x,\sqrt{A}}$, then $\tr_b(h k) =
\lambda \tr_b k = 0$.  We call vectors tangent to $\R_+ \cdot b$
\emph{pure-trace}, and those tangent to $T_b \M_{x,\sqrt{A}}$
\emph{traceless}.

By \cite[Thm.~8.9]{ebin70:_manif_of_rieman_metric} (see also
\cite[Prop.~1.13]{freed89:_basic_geomet_of_manif_of}),
$\M_{x,\sqrt{A}}$ is (non-canonically) isometric to the symmetric
space $SL(n) / SO(n)$.  Furthermore, we have the following formula for
the exponential mapping $\exp^0_b$ of $\M_{x, \sqrt{A}}$ (where we
have again translated the result to a pointwise one):

\begin{theorem}\label{thm:8}
  Let $b \in \M_{x, \sqrt{A}}$ and $c \in T_b \M_{x, \sqrt{A}}$.  Then
  \begin{equation*}
    \exp^0_b(c) = b \exp(b^{-1} c)
  \end{equation*}
  and $\exp^0_b$ is a diffeomorphism from $T_b \M_{x,\sqrt{A}}$ to
  $\M_{x,\sqrt{A}}$.
\end{theorem}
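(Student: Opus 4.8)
The plan is to derive everything from the well-understood geometry of the ambient space $(\Matx, \langle \cdot, \cdot \rangle^0)$, which is a symmetric space. First I would record the reduction that makes this possible: by the definition of $\M_{x,\sqrt{A}}$, the volume density $\sqrt{A}$ is a fixed positive constant on it, so the metric that $\langle \cdot, \cdot \rangle$ induces on $\M_{x,\sqrt{A}}$ is precisely that constant times the metric induced by $\langle \cdot, \cdot \rangle^0$. Homothetic metrics share the same Levi-Civita connection, hence the same geodesics and the same exponential map; so it suffices to compute $\exp^0_b$ as the exponential map of the induced metric $\langle \cdot, \cdot \rangle^0|_{\M_{x,\sqrt{A}}}$.

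Second, I would show that $\M_{x,\sqrt{A}}$ is totally geodesic in $(\Matx, \langle \cdot, \cdot \rangle^0)$. The geodesic of $\langle \cdot, \cdot \rangle^0$ issuing from $a_0$ with initial velocity $b$ is $t \mapsto a_0 \exp(t a_0^{-1} b)$ (the standard formula for the trace metric on positive-definite forms, cf.\ \cite[Thm.~8.9]{ebin70:_manif_of_rieman_metric}); this is checked directly, noting that $a_0 \exp(t a_0^{-1} b) = a_0^{1/2} \exp(t a_0^{-1/2} b a_0^{-1/2}) a_0^{1/2}$ is symmetric positive definite because $a_0^{-1/2} b a_0^{-1/2}$ is symmetric. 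Taking determinants, $\det\big(a_0 \exp(t a_0^{-1} b)\big) = (\det a_0)\, e^{t\,\tr(a_0^{-1} b)} = (\det a_0)\, e^{t\,\tr_{a_0}(b)}$, which is constant in $t$ exactly when $\tr_{a_0}(b) = 0$, i.e.\ exactly when $b \in T_{a_0} \M_{x,\sqrt{A}}$. Hence every ambient $\langle \cdot, \cdot \rangle^0$-geodesic tangent to $\M_{x,\sqrt{A}}$ remains in $\M_{x,\sqrt{A}}$, so $\M_{x,\sqrt{A}}$ is totally geodesic and its exponential map is the restriction of the ambient one: $\exp^0_b(c) = b \exp(b^{-1} c)$ for $c \in T_b \M_{x,\sqrt{A}}$.

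For the diffeomorphism claim I would exhibit the inverse directly: set $\Phi(a_1) := b\, \log(b^{-1} a_1)$. Since $b^{-1} a_1$ is conjugate to the positive-definite symmetric form $b^{-1/2} a_1 b^{-1/2}$, it has positive real eigenvalues, so its principal logarithm is a well-defined endomorphism, $\Phi(a_1) = b^{1/2} \log(b^{-1/2} a_1 b^{-1/2}) b^{1/2}$ is symmetric, and $\tr_b \Phi(a_1) = \tr \log(b^{-1} a_1) = \log \det(b^{-1} a_1) = \log(\det a_1 / \det b) = 0$, so $\Phi$ lands in $T_b \M_{x,\sqrt{A}}$; it is smooth because the matrix logarithm is analytic on positive-definite endomorphisms. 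A one-line computation gives $\exp^0_b \circ \Phi = \mathrm{id}$ and $\Phi \circ \exp^0_b = \mathrm{id}$ (the latter using $\log \exp = \mathrm{id}$ on endomorphisms with real eigenvalues, which $b^{-1} c$ has). Alternatively, one may simply invoke the already-cited identification $\M_{x,\sqrt{A}} \cong SL(n)/SO(n)$, a complete, simply connected manifold of nonpositive curvature, and conclude by the Cartan--Hadamard theorem. The content of the argument sits in the first two paragraphs — the reduction to $\langle \cdot, \cdot \rangle^0$ and the total-geodesy computation, which is the only place the constant-determinant structure of $\M_{x,\sqrt{A}}$ is genuinely used — and I do not anticipate a real obstacle.
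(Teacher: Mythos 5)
Your proposal is correct, but it follows a genuinely different route from the paper, because the paper offers no argument for this statement at all: Theorem \ref{thm:8} is presented as a quoted result, with the sentence preceding it invoking \cite[Thm.~8.9]{ebin70:_manif_of_rieman_metric} (see also \cite[Prop.~1.13]{freed89:_basic_geomet_of_manif_of}) for the identification of $\M_{x,\sqrt{A}}$ with the symmetric space $SL(n)/SO(n)$, and the displayed formula for $\exp^0_b$ being that known result translated to the pointwise setting, exactly as Theorems \ref{thm:4} and \ref{thm:7} are quoted from Freed--Groisser and Gil-Medrano--Michor. What you supply instead is a self-contained verification: the homothety observation (on $\M_{x,\sqrt{A}}$ the conformal factor $\sqrt{A}$ is constant, so the metrics induced by $\langle \cdot,\cdot \rangle$ and $\langle \cdot,\cdot \rangle^0$ on the slice share a Levi-Civita connection and hence an exponential map), the determinant computation showing that ambient $\langle \cdot,\cdot \rangle^0$-geodesics with $a_0$-traceless initial velocity remain in $\M_{x,\sqrt{A}}$, so the slice is totally geodesic for $\langle \cdot,\cdot \rangle^0$ --- note it is \emph{not} totally geodesic for the weighted metric $\langle \cdot,\cdot \rangle$, as one sees from Theorem \ref{thm:4}, which is precisely why your homothety step is needed and is the right way to disambiguate which metric $\exp^0$ refers to --- and the explicit smooth inverse $a_1 \mapsto b \log(b^{-1} a_1)$, whose well-definedness, symmetry, tracelessness, and two-sided inverse property you check correctly. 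The only external input is the geodesic formula $t \mapsto a_0 \exp(t a_0^{-1} b)$ for the trace metric, which is the same Ebin citation the paper leans on, so nothing is circular. The paper's citation buys brevity and consistency with its treatment of the other fiberwise results; your argument buys transparency about exactly where the fixed-determinant structure enters (only in the total-geodesy computation and in $\tr_b \Phi(a_1) = \log(\det a_1 / \det b) = 0$), and it replaces the appeal to the $SL(n)/SO(n)$ structure (or, in your alternative, Cartan--Hadamard) by an explicit inverse. I see no gap.
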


\subsection{Existence and uniqueness of geodesics in
  $\overline{\Matx}$}
\label{sec:exist-uniq-geod}

We now turn to the proof of existence and uniqueness of geodesics in
the completion $\overline{\Matx}$.  These geodesics will turn out to
have a relatively simple form: they are either Riemannian geodesics,
or they are the concatenation of the geodesic from the initial point
to the singular point $[0]$ of $\overline{\Matx}$, followed by the
geodesic from $[0]$ to the terminal point.  As we saw in \S
\ref{sec:metric-d_x-matx}, such a geodesic is consequently a
concatenation of straight segments.

On the other hand, note that $\overline{\Matx}$ is not locally
compact, as by Proposition \ref{prop:1}, the closed ball around $[0]$ of
radius $r > 0$ is the noncompact set
\begin{equation*}
  \overline{B}(r, [0]) = \{ [0] \} \cup \left\{ a \in \Matx \midmid \frac{4}{\sqrt{n}}
    \sqrt[4]{A} \leq r \right\}.
\end{equation*}
For this reason, existence of geodesics in $\overline{\Matx}$ does not
follow from general theory, e.g., the Hopf--Rinow--Cohn--Vossen
Theorem \cite[Prop.~1.3.7]{Bridson:1999tu}.  Therefore, it falls upon
us to prove this existence directly.

We begin this subsection with a general result on
\emph{non}-mimimality of paths, which we will then use to prove that
Riemannian geodesics in $\Matx$, which are unique by Theorem
\ref{thm:7}, minimize as long as they exist.  Following that, we
analyze the boundary of the image of exponential mapping of $\Matx$,
which will allow us to show the full existence and uniqueness result.

\subsubsection{Non-minimal paths}
\label{sec:non-minimal-paths}

At this point, we require a fundamental result in Riemannian geometry,
adapted to a low-regularity situation.  We begin with two technical
lemmas.

\begin{lemma}\label{lem:17}
  Let $[a,b]$ be an interval in $\R$, and let $r: [a,b] \rightarrow
  \R$ be $C^1$.  For $t \in [a,b]$, define
  \begin{equation*}
    \hat{r}(t) := \min \left( \max_{s \in [0, t]} r(s), 1 \right).
  \end{equation*}
  Then $\hat{r} : [a,b] \rightarrow \R$ is absolutely continuous.  In
  particular, $\hat{r}$ is a.e.-differentiable (with respect to
  Lebesgue measure), $\hat{r}'$ is integrable, and
  \begin{equation*}
    \hat{r}(b) - \hat{r}(a) = \integral{a}{b}{\hat{r}'(t)}{dt}.
  \end{equation*}
\end{lemma}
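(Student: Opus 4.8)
The plan is to reduce the claim to a standard fact: a composition of a Lipschitz function with an absolutely continuous function of bounded variation is absolutely continuous. First I would isolate the inner function $R(t) := \max_{s \in [0,t]} r(s)$ and argue that it is absolutely continuous. Since $r$ is $C^1$ on the compact interval $[a,b]$, it is Lipschitz there, say with constant $K$; for $t_1 < t_2$ one checks directly that $R(t_2) - R(t_1) \leq \sup_{s \in [t_1, t_2]} (r(s) - R(t_1))^+ \leq K(t_2 - t_1)$, using that $R(t_1) \geq r(s_0)$ for the point $s_0 \leq t_1$ achieving the earlier maximum, while $R(t_2 ) \geq R(t_1)$ always, since $R$ is nondecreasing. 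Hence $R$ is itself Lipschitz (nondecreasing) with the same constant $K$, and in particular absolutely continuous. Then $\hat{r}(t) = \min(R(t), 1) = F(R(t))$, where $F(u) := \min(u, 1)$ is $1$-Lipschitz; the composition of a $1$-Lipschitz function with a Lipschitz function is again Lipschitz, hence absolutely continuous on $[a,b]$.

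Once absolute continuity of $\hat{r}$ is established, the remaining assertions are exactly the content of the fundamental theorem of calculus for absolutely continuous functions: such a function is differentiable Lebesgue-a.e., its derivative is Lebesgue-integrable, and it equals the integral of its derivative over any subinterval, so in particular $\hat{r}(b) - \hat{r}(a) = \int_a^b \hat{r}'(t)\,dt$. I would simply cite this from a standard reference on real analysis (e.g.\ Royden or Rudin).

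The only step requiring any care — and the one I would expect to be the main obstacle — is the Lipschitz estimate for the running maximum $R$. It is worth writing out the two-line argument carefully, because the naive bound $|R(t_2) - R(t_1)| \leq \sup_{[t_1,t_2]} |r'|\,(t_2-t_1)$ is not literally correct phrased that way (the maximum over $[0,t_2]$ could be attained well before $t_1$); the correct observation is that $R$ is nondecreasing, so $R(t_2) - R(t_1) \geq 0$, and that any increase must come from values of $r$ on the new interval $[t_1, t_2]$, which are within $K(t_2 - t_1)$ of $r(t_1) \leq R(t_1)$. Everything else is routine invocation of measure-theoretic boilerplate.
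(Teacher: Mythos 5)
Your proof is correct, but it takes a genuinely different route from the paper. The paper observes that $\hat{r}$ is continuous and monotone and then invokes the criterion of \cite[Thm.~7.18]{Rudin:1987tl} (a continuous monotone function mapping null sets to null sets is absolutely continuous); the Luzin-type property is verified by noting that every value of $\hat{r}$ is either $1$, a value $r(t)$, or the value of $r$ at a critical point, so that $\hat{r}(N) \subseteq r(N) \cup r(\mathrm{Crit}(r)) \cup \{1\}$, with $r(\mathrm{Crit}(r))$ null by Sard's theorem and $r(N)$ null since $r$ is $C^1$. You instead prove directly that the running maximum $R(t) = \max_{s \le t} r(s)$ is Lipschitz with the same constant as $r$ --- your two-line estimate (any increase of $R$ on $[t_1,t_2]$ comes from values of $r$ on that interval, which exceed $r(t_1) \le R(t_1)$ by at most $K(t_2-t_1)$, while monotonicity rules out a decrease) is sound --- and then compose with the $1$-Lipschitz map $u \mapsto \min(u,1)$. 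Your argument is more elementary (no Sard, no Luzin $N$ criterion), yields the strictly stronger conclusion that $\hat{r}$ is Lipschitz with constant $\sup_{[a,b]}|r'|$, and works under the weaker hypothesis that $r$ is merely Lipschitz; the paper's argument is shorter on the page because it outsources the work to two standard citations and uses the $C^1$ hypothesis only through Sard. Either way, the final assertions are, as you say, just the fundamental theorem of calculus for absolutely continuous functions.
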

\begin{proof}
  We note that $\hat{r}$ is a continuous, monotone function.  Thus, by
  \cite[Thm.~7.18]{Rudin:1987tl}, if we can show that $\hat{r}$ maps
  sets of measure zero to sets of measure zero, then $\hat{r}$ is
  absolutely continuous, and
    the other properties follow.

  Now, for any $t \in [a,b]$, either $\hat{r}(t) = r(t)$, $\hat{r}(t)
  = 1$, or there exists a critical point $t_0$ of $r$ with $t_0 < t$
  and $\hat{r}(t) = r(t_0)$.  Let $\textnormal{Crit}(r)$ denote the
  set of critical points of $r$; by Sard's Theorem,
  $r(\textnormal{Crit}(r))$ has measure zero.  But if $N \subseteq
  [a,b]$ is any Lebesgue null set, then $r(N)$ is also a null set, and
  by the above discussion, $\hat{r}(N) \subseteq r(N) \cup
  r(\textnormal{Crit}(N)) \cup \{1\}$, so $\hat{r}(N)$ has measure
  zero, as desired.
\end{proof}

\begin{lemma}\label{lem:18}
  Let $(N, \gamma)$ be a finite-dimensional Riemannian manifold, and
  let $f : [0,1] \rightarrow \R$ be an absolutely continuous function.
  Let $p \in N$ and $v \in T_p N$ be such that $f(t) v$ lies in the
  domain of definition of $\exp_p$ for all $t$.  Then the radial path
  $\sigma(t) := \exp_p(f(t) v)$ is an absolutely continuous curve in
  $N$.  Furthermore, $\sigma$ is a.e.-differentiable,
  $\abs{\sigma'(t)}_\gamma = \abs{f'(t)} \abs{v}_\gamma$ for a.e.~$t$,
  and
  \begin{equation*}
    L(\sigma) = \integral{0}{1}{\abs{\sigma'(t)}_\gamma}{dt} =
    \abs{v}_\gamma \integral{0}{1}{\abs{f'(t)}}{dt}.
  \end{equation*}
\end{lemma}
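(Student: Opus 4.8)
The plan is to exploit the fact that the exponential map $\exp_p$ is smooth on its (open) domain of definition, so composing it with the absolutely continuous curve $t \mapsto f(t)v$ in $T_pN$ should yield an absolutely continuous curve in $N$; the length formula will then follow from the Gauss lemma, which says precisely that radial geodesics are unit-speed and orthogonal to the spheres, so that the radial curve $\sigma$ has speed $|f'(t)|\,|v|_\gamma$ at each point of differentiability. Concretely, I would first record that since $f$ is continuous on the compact interval $[0,1]$, its image is a compact subinterval $[\alpha,\beta]\subset\R$, so the whole segment $\{s v \mid s\in[\alpha,\beta]\}$ lies in the (open) domain $U$ of $\exp_p$ and in fact in a compact subset $K\subset U$. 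On $K$ the map $\exp_p$ is Lipschitz with some constant $C$ (being $C^1$ on a neighborhood of the compact set $K$), hence $d_\gamma(\sigma(t),\sigma(s)) \le C\,|f(t)-f(s)|\,|v|_\gamma$. Absolute continuity of $f$ then transfers directly to absolute continuity of $\sigma$: given $\varepsilon>0$, choose $\delta$ for $f$ with tolerance $\varepsilon/(C|v|_\gamma)$, and the same $\delta$ works for $\sigma$.

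Next, absolutely continuous curves in a finite-dimensional Riemannian manifold are differentiable almost everywhere with integrable speed, and their length is the integral of that speed; this is standard (and can be cited, e.g., to the same references on metric geometry already invoked in the paper, or proven in coordinates). So it remains only to identify $|\sigma'(t)|_\gamma$ at a.e.\ $t$. At any $t$ where both $f$ is differentiable and $\sigma$ is differentiable, the chain rule gives $\sigma'(t) = d(\exp_p)_{f(t)v}\big(f'(t)v\big) = f'(t)\, d(\exp_p)_{f(t)v}(v)$. The Gauss lemma for the (complete-enough, or at least on the star-shaped domain $U$) exponential map says that $d(\exp_p)_{w}$ preserves the norm of the radial direction $w$ itself; applying this with $w = f(t)v$ (and noting $v$ is a positive scalar multiple of $w$ when $f(t)\neq 0$, while the case $f(t)=0$ is handled by $d(\exp_p)_0 = \mathrm{id}$) yields $|d(\exp_p)_{f(t)v}(v)|_\gamma = |v|_\gamma$. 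Hence $|\sigma'(t)|_\gamma = |f'(t)|\,|v|_\gamma$ for a.e.\ $t$, and integrating gives the displayed length formula.

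The main obstacle, I expect, is not any single hard step but rather getting the low-regularity bookkeeping exactly right: one must be careful that ``differentiable a.e.'' for $\sigma$ holds at a.e.\ point where $f$ is also differentiable (so that the chain-rule computation is legitimate on a full-measure set), and that the speed function $t\mapsto|\sigma'(t)|_\gamma = |f'(t)|\,|v|_\gamma$ is measurable and integrable so that $L(\sigma)$ is genuinely the integral of $|\sigma'|_\gamma$. The first point follows because the composition of the a.e.-differentiable $f$ with the everywhere-$C^1$ map $\exp_p$ is differentiable wherever $f$ is (by the ordinary chain rule for a differentiable-composed-with-$C^1$ map), so the full-measure set on which the formula holds is exactly $\{t : f'(t) \text{ exists}\}$; the second point is immediate once we know $f'\in L^1([0,1])$ from absolute continuity of $f$. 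A remark worth inserting is that the hypothesis ``$f(t)v$ lies in the domain of $\exp_p$ for all $t$'' is what makes the compactness-and-Lipschitz argument go through; without it the conclusion can fail.
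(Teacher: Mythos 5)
Your proposal is correct and follows essentially the same route as the paper's proof: smoothness of $\exp_p$ transfers absolute continuity from $f$ to $\sigma$, the Gauss Lemma identifies the a.e.\ speed as $\abs{f'(t)}\,\abs{v}_\gamma$, and the classical length formula for absolutely continuous curves gives the integral identity. You simply spell out the compactness/Lipschitz and chain-rule bookkeeping that the paper delegates to citations (and note the sign of $f(t)$ is irrelevant since only norms and linearity of the differential are used).
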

\begin{proof}
  Since $f(t) v$ lies within the domain of definition of $\exp_p$ for
  all $t$, we may use the exponential mapping to apply classical
  results about absolutely continuous functions.
            
  In particular, since $\exp_p$ is smooth and $f$ is absolutely
  continuous, $\sigma$ is absolutely continuous, and hence
  a.e.-differentiable.  By the Gauss Lemma
  \cite[Lem.~1.9.1]{klingenberg95:_rieman_geomet}, the differential of
  $\exp_p$ maps $f'(t) v$ isometrically into $\sigma'(t)$ (wherever
  these exist), so $\abs{\sigma'(t)}_\gamma = \abs{f'(t)}
  \abs{v}_\gamma$.  Finally, the formula for the length of $\sigma$
  follows from
    \cite[p.~159]{Rudin:1987tl}.
  \end{proof}

The following theorem is the result we need---it is a modification and
extension of \cite[Thm.~1.9.2]{klingenberg95:_rieman_geomet}.

\begin{theorem}\label{thm:6}
  Let $(N, \gamma)$ be a finite-dimensional Riemannian manifold, $p
  \in N$, $v \in T_p N$ nonzero.  Let $\tilde{\rho}(t)$, $t \in [0,
  1]$, be a differentiable curve in $T_p M$ from $0$ to $v$ that lies
  within the domain of definition of $\exp_p$ for all $t$.  Assume
  $\tilde{\rho}(t) \neq 0$ for $t > 0$ and write $\tilde{\rho}(t)$ in
  polar coordinates,
  \begin{equation*}
    \tilde{\rho}(t) = r(t) w(t), \qquad w(t) :=
    \tilde{\rho}(t) / \abs{\tilde{\rho}(t)}_\gamma.
  \end{equation*}
  Define
  \begin{equation*}
    \hat{r}(t) := \min \left( \max_{s \in [0, t]} r(s), 1 \right),
  \end{equation*}
  as well as $\tilde{\sigma}(t) := \hat{r}(t) v / \abs{v}_\gamma$ and
  $\tilde{\tau}(t) := t v$ for $t \in [0, 1]$.  Finally, define
  $\rho(t) := \exp_p(\tilde{\rho}(t))$, $\sigma(t) :=
  \exp_p(\tilde{\sigma}(t))$, and $\tau(t) :=
  \exp_p(\tilde{\tau}(t))$.

  Then $\rho$, $\sigma$ and $\tau$ are paths in $N$ from $p$ to $q :=
  \exp_p(v)$.  The path $\sigma$ is differentiable for
  a.e.~$t \in [0,1]$, and $L(\sigma) =
  \integral{0}{1}{\abs{\sigma'(t)}_\gamma}{dt}$.  For every $t$ where
  this is well defined, we have $\abs{\sigma'(t)}_\gamma \leq
  \abs{\rho'(t)}_\gamma$.  Furthermore, $L_\gamma(\tau) =
  L_\gamma(\sigma) \leq L_\gamma(\rho)$.

  Suppose the differential $D_{s \tilde{\rho}(t)} \exp_p$ of the
  exponential mapping is of maximal rank for all $s, t \in [0, 1]$.
  Then if and only if $\tilde{\rho}$ is a reparametrization of
  $\tilde{\tau}$, the following equalities hold: $L_\gamma(\rho) =
  L_\gamma(\tau) = L_\gamma(\sigma)$ and $\abs{\sigma'(t)}_\gamma =
  \abs{\rho'(t)}_\gamma$ for a.e.~$t$.  In particular, if $\exp_p$ is
  a diffeomorphism from $U \subseteq T_p N$ to $V \subseteq N$, then
  $\tau$ is of minimal length among all paths in $V$ from $p$ to $q$,
  and it is unique (up to reparametrization) with respect to this
  property.
\end{theorem}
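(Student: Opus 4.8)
The plan is to follow the classical proof of Theorem 1.9.2 in Klingenberg, but with careful attention to the low-regularity points: $\hat r$ is only absolutely continuous (by Lemma \ref{lem:17}), so $\sigma$ is only a.e.-differentiable, and we must apply Lemma \ref{lem:18} rather than the smooth-curve version of the Gauss Lemma argument. First I would decompose $\rho'(t)$ at each $t>0$ into its radial and spherical components. Writing $\tilde\rho(t) = r(t) w(t)$ with $|w(t)|_\gamma = 1$, the Gauss Lemma says that $D_{\tilde\rho(t)}\exp_p$ sends the radial direction $w(t)$ to a unit vector orthogonal to the image of the spherical directions, so $|\rho'(t)|_\gamma^2 = |r'(t)|^2 |w(t)|_\gamma^2 + (\text{spherical part})^2 \geq |r'(t)|^2$, with equality if and only if the spherical part vanishes, i.e.\ $w'(t)$ contributes nothing — which (given maximal rank of the differential) means $w(t)$ is constant. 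This is the pointwise inequality $|\sigma'(t)|_\gamma \leq |\rho'(t)|_\gamma$ once we identify $|\sigma'(t)|_\gamma = |\hat r'(t)|\,|v|_\gamma / |v|_\gamma \cdot |v|_\gamma$... more precisely, $\tilde\sigma(t) = \hat r(t)\, v/|v|_\gamma$ is a radial path, so Lemma \ref{lem:18} gives $|\sigma'(t)|_\gamma = |\hat r'(t)|$ for a.e.\ $t$, and by construction $\hat r'(t) = r'(t)$ wherever $0 < r(s) < 1$ is attained at $s=t$ as the running max, while $\hat r'(t) = 0$ otherwise; in all cases $|\hat r'(t)| \leq |r'(t)|$ a.e.

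Next I would integrate. By Lemma \ref{lem:18}, $L_\gamma(\sigma) = |v|_\gamma \int_0^1 |\hat r'(t)/|v|_\gamma|\,dt \cdot |v|_\gamma$; cleaner: $\tilde\sigma$ runs from $0$ to $v$ radially along the ray $\R_{\geq 0}\cdot v$, monotonically (since $\hat r$ is monotone), so $L_\gamma(\sigma) = \hat r(1) - \hat r(0) \big/ 1 \cdot |v|_\gamma = |v|_\gamma$, which is exactly $L_\gamma(\tau)$ since $\tilde\tau(t) = tv$. Meanwhile $L_\gamma(\rho) = \int_0^1 |\rho'(t)|_\gamma\,dt \geq \int_0^1 |\hat r'(t)|\,dt \cdot$ (the conversion factor) $= L_\gamma(\sigma)$, using the pointwise inequality established above together with $\hat r(1) = \min(\max_{s}r(s),1) \geq 1$ — wait, here I need that $r(1) = |\tilde\rho(1)|_\gamma = |v|_\gamma$; rescaling so that we work with $\tilde\rho(t)/|v|_\gamma$ (or simply normalizing $|v|_\gamma$ into the bookkeeping) makes $\hat r(1) = 1$ since $\max_s r(s) \geq r(1) = 1$, hence $\hat r(1) - \hat r(0) = 1$ and $L_\gamma(\sigma) = |v|_\gamma$. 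That establishes the chain $L_\gamma(\tau) = L_\gamma(\sigma) \leq L_\gamma(\rho)$.

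For the equality/rigidity clause, assume $D_{s\tilde\rho(t)}\exp_p$ has maximal rank for all $s,t$. If $L_\gamma(\rho) = L_\gamma(\tau)$ then the pointwise inequality $|\rho'(t)|_\gamma \geq |\hat r'(t)|$ must be an equality a.e., which forces (by the Gauss-Lemma decomposition and maximal rank) the spherical component of $\rho'$ to vanish a.e., so $w(t)$ is a.e.-constant, hence constant by continuity; it must then equal $v/|v|_\gamma$ since $\tilde\rho(1)=v$. Thus $\tilde\rho(t) = r(t)\, v/|v|_\gamma$ is a radial reparametrization of $\tilde\tau$. Conversely, if $\tilde\rho$ is a reparametrization of $\tilde\tau$, all three lengths agree and $\hat r = r$ up to the truncation, giving $|\sigma'(t)|_\gamma = |\rho'(t)|_\gamma$ a.e. Finally, if $\exp_p: U \to V$ is a diffeomorphism, then for \emph{any} path $\tilde\rho$ in $U$ from $0$ to $v$ — which we may assume avoids $0$ for $t>0$ after a harmless reparametrization, or handle $t=0$ as a removable degeneracy — the above shows $L_\gamma(\tau) \leq L_\gamma(\exp_p\circ\tilde\rho)$, and since every path in $V$ from $p$ to $q$ lifts uniquely through the diffeomorphism to such a $\tilde\rho$, $\tau$ is globally minimal in $V$; uniqueness up to reparametrization follows from the rigidity clause.

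The main obstacle I expect is the low-regularity bookkeeping: ensuring that the Gauss-Lemma orthogonal splitting of $\rho'(t)$ is valid and that the ``equality forces radial'' implication survives the passage to a.e.-defined derivatives and the truncation in the definition of $\hat r$. In particular, one must check that the set where $\hat r$ is locally constant (because the running maximum is ``frozen'') contributes zero to $L_\gamma(\sigma)$ but could contribute positively to $L_\gamma(\rho)$ unless $\rho$ is itself constant there — this is where Sard's theorem via Lemma \ref{lem:17} is essential, guaranteeing $\hat r$ is absolutely continuous so that the fundamental theorem of calculus applies and no length is ``lost'' on the critical set.
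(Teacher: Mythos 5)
Your proposal is correct and follows essentially the same route as the paper: the Gauss Lemma orthogonal splitting of $\rho'$ into radial and spherical parts (the paper packages this via the auxiliary map $F(s,t)=\exp_p(a\,s\,w(t))$), Lemmas \ref{lem:17} and \ref{lem:18} to handle the absolute continuity of $\hat r$ and the a.e.-differentiability of $\sigma$, the integrated comparison $L_\gamma(\tau)=L_\gamma(\sigma)\le L_\gamma(\rho)$, and the maximal-rank argument forcing $w'\equiv 0$ in the equality case. The normalization wrinkle you noticed (the truncation at $1$ versus $r(1)=\abs{v}_\gamma$) is present in the paper's own bookkeeping as well and is resolved exactly as you suggest.
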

\begin{proof}
  We first note that by Lemma \ref{lem:17}, $\hat{r}$ is absolutely
  continuous, so Lemma \ref{lem:18} applies to give that $\sigma$ is
  a.e.-differentiable, $\abs{\sigma'(t)}_\gamma = \abs{\hat{r}'(t)}$
  wherever these are defined, and $L(\sigma) =
  \integral{0}{1}{\abs{\sigma'(t)}_\gamma}{dt}$.
  
  Let $a := \abs{v}_\gamma$.  For small $\epsilon > 0$, we define a
  map
  \begin{equation*}
    F : [0,1] \times (\epsilon, 1] \rightarrow N; \qquad (s, t)
    \mapsto \exp_p(a s w(t)).
  \end{equation*}
  Then we have $\rho(t) = F(r(t) / a, t)$ and $\rho'(t) =
  \frac{r'(t)}{a} \frac{\partial F}{\partial s} \left( \frac{r(t)}{a},
    t \right) + \frac{\partial F}{\partial t} \left( \frac{r(t)}{a}, t
  \right)$.  Furthermore, by the Gauss Lemma
  \cite[Lem.~1.9.1]{klingenberg95:_rieman_geomet}, $u_1 := \partial
  F(s,t) / \partial s$ is orthogonal to $u_2 := \partial F(s,t)
  / \partial t$, and $\abs{\partial F(s,t) / \partial s}_\gamma =
  \abs{a w(t)}_\gamma = a$.  Therefore, for a.e.~$t$,
                            \begin{equation}\label{eq:37}
    \abs{\rho'(t)}_\gamma^2 = \abs{r'(t)}^2 + \abs{u_2 \left(
        \frac{r(t)}{a}, t \right)}_\gamma^2 \geq \abs{r'(t)}^2 \geq
    \abs{\hat{r}'(t)}^2 = \abs{\sigma'(t)}_\gamma^2.
  \end{equation}
          This shows that $L_\gamma(\sigma) \leq L_\gamma(\rho)$.
  Furthermore, we note that $\sigma$ is just a reparametrization of
  $\tau$, since $\sigma(t) = \tau(\hat{r}(t) / \abs{v}_\gamma)$, and
  $\hat{r}$ is a continuous, monotone function.  Therefore
  $L_\gamma(\tau) = L_\gamma(\sigma)$ follows
  (cf.~\cite[Prop.~I.1.20]{Bridson:1999tu}).

  If equality holds in \eqref{eq:37}, then $r'(t) = \hat{r}'(t) \geq
  0$.  Additionally, we must have
  \begin{equation*}
    0 = \left. \frac{\partial F}{\partial t} (s, t) \right|_{(s,t) =
      (r(t)/r, t)} = \left. D_{r s w(t)} \exp_p(r s w'(t)) \right|_{(s,t) =
      (r(t)/r, t)}.
  \end{equation*}
  Since $r(t) \neq 0$ for $t > 0$, if $D_{s \tilde{\rho}(t)} \exp_p$
  is of maximal rank for all $s, t \in [0, 1]$, then $w'(t) = 0$.
  Thus, if equality holds in \eqref{eq:37} for a.e.~$t$, then $w(t)
  \equiv w(1) = v / \abs{v}_\gamma$.  This shows that $\rho$ is a
  reparametrization of $\tau$.

  Finally, using \eqref{eq:37}, we have
  \begin{equation}\label{eq:38}
    L_\gamma(\rho) = \integral{0}{1}{\abs{\rho'(t)}_\gamma}{dt} \geq
    \integral{0}{1}{\abs{r'(t)}_\gamma}{dt} \geq
    \integral{0}{1}{r'(t)}{dt} = r(1) = L_\gamma(\tau).
  \end{equation}
  So if $L_\gamma(\tau) = L_\gamma(\rho)$, then by \eqref{eq:37} and
  \eqref{eq:38}, $r'(t) \geq 0$ for all $t$, and similarly to the
  previous paragraph, if $D_{s \tilde{\rho}(t)} \exp_p$ is of maximal
  rank for all $s, t \in [0, 1]$ we have $w'(t) \equiv 0$, so again
  $\rho$ is a reparametrization of $\tau$.
\end{proof}

The above general theorem has the following consequence in our
setting.  Let $a_t$ be a piecewise differentiable path in $\Matx$, and
write $a_t = \lambda_t a_0 \exp(a_0^{-1} b_t)$, $t \in [0,1]$, with
$\lambda_t \in \R_+$ and $\tr_{a_0} b_t = 0$ for all $t$.  (Note that
by the discussion following Definition \ref{dfn:5}, this is always
possible.)  First, we claim that the path $\bar{a}_t := a_0
\exp(a_0^{-1} b_t)$ is the projection of $a_t$ onto
$\M_{x,\sqrt{A_0}}$, since
\begin{equation*}
  \sqrt{\bar{A}_t} = \sqrt{A_0} \sqrt{\det \exp(a_0^{-1} b_t)} =
  \sqrt{A_0} \sqrt{\exp(\tr_{a_0} b_t)} = \sqrt{A_0}.
\end{equation*}

Write $b_t$ in polar coordinates with respect to $\langle \cdot, \cdot
\rangle_{\bar{a}_t}$; that is,
\begin{equation*}
  b_t = \beta_t c_t, \qquad c_t := \frac{b_t}{\abs{b_t}_{\bar{a}_t}}.
\end{equation*}
As in Theorem \ref{thm:6}, define
\begin{equation*}
  \hat{\beta}_t := \min \left( \max_{s \in [0,t]} \beta_t, 1 \right)
\end{equation*}
and $\hat{a}_t := \exp^0_{a_0}(\hat{\beta}_t b_1)$, where, as in
Theorem \ref{thm:8}, $\exp^0$ denotes the exponential mapping of
$\M_{x,\sqrt{A_0}}$.  With this notation, we have the following.

\begin{lemma}\label{lem:12}
                  Define
    $\tilde{a}_t := \lambda_t \hat{a}_t$.  Then $\tilde{a}_t$ is a
  rectifiable path from $a_0$ to $a_1$, and $L(\tilde{a}_t) \leq
  L(a_t)$, with equality if and only if $\bar{a}_t$ is a
  reparametrization of the radial geodesic (of $\M_{x,\sqrt{A_0}}$),
  $a^0_t := \exp^0_{a_0} (t b_1)$.
\end{lemma}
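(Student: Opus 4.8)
The plan is to use the orthogonal product decomposition $\Matx \cong \R_+ \times \M_{x,\sqrt{A_0}}$ recorded after Definition~\ref{dfn:5}: write $a_t = \lambda_t \bar a_t$ with $\lambda_t \in \R_+$ and $\bar a_t = a_0 \exp(a_0^{-1} b_t) \in \M_{x,\sqrt{A_0}}$ (so $\bar a_t$ is the projection computed just before the statement), keep the volume factor $\lambda_t$ untouched, and replace the $\M_{x,\sqrt{A_0}}$-factor $\bar a_t$ by the ``radially monotone'' path $\hat a_t$ produced by Theorem~\ref{thm:6}. The point is that the problem splits into the two factors, nothing changes on the $\R_+$-factor, and everything is driven by Theorem~\ref{thm:6} applied to $\M_{x,\sqrt{A_0}}$.

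The first step is a speed computation in these coordinates. Differentiating $a_t = \lambda_t\bar a_t$ gives $a'_t = \lambda'_t \bar a_t + \lambda_t \bar a'_t$; since $\sqrt{\bar A_t} \equiv \sqrt{A_0}$ is constant we have $\tr_{\bar a_t}(\bar a'_t) = 0$, and a short calculation from $\langle b, c\rangle_a = \tr_a(bc)\sqrt{A}$ then shows that the cross term $\langle \lambda'_t\bar a_t, \lambda_t \bar a'_t\rangle_{a_t}$ vanishes and that
\begin{equation*}
  \abs{a'_t}_{a_t}^2 = P(t) + \lambda_t^{n/2}\, \abs{\bar a'_t}_{\bar a_t}^2,
  \qquad P(t) := n (\lambda'_t)^2 \lambda_t^{n/2-2} \sqrt{A_0},
\end{equation*}
where $\abs{\cdot}_{\bar a_t}$ denotes the $\langle\cdot,\cdot\rangle$-norm on $\M_{x,\sqrt{A_0}}$ and, crucially, $P(t)$ depends only on the path $\lambda_t$. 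Because $\tilde a_t = \lambda_t \hat a_t$ has the same factor $\lambda_t$ and $\hat a_t = \exp^0_{a_0}(\hat\beta_t b_1) \in \M_{x,\sqrt{A_0}}$, the identical formula holds for $\tilde a_t$ with $\bar a_t$ replaced by $\hat a_t$ and with the same $P(t)$. Hence, once we know $\abs{\hat a'_t}_{\hat a_t} \le \abs{\bar a'_t}_{\bar a_t}$ for a.e.~$t$, pointwise domination of the nonnegative integrands $\sqrt{P(t) + \lambda_t^{n/2}\abs{\cdot}^2}$ together with strict monotonicity of the square root yields $L(\tilde a_t) \le L(a_t)$, and equality forces $\abs{\hat a'_t}_{\hat a_t} = \abs{\bar a'_t}_{\bar a_t}$ a.e.

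The second step handles the $\M_{x,\sqrt{A_0}}$-factor via Theorem~\ref{thm:6}, taking $N = \M_{x,\sqrt{A_0}}$, $\gamma = \langle\cdot,\cdot\rangle$, $p = a_0$, $v = b_1$, and $\tilde\rho(t) := (\exp^0_{a_0})^{-1}(\bar a_t)$; by Theorem~\ref{thm:8} this is exactly $b_t$, and, moreover, $\exp^0_{a_0}$ is a global diffeomorphism, so the maximal-rank hypothesis of Theorem~\ref{thm:6} holds at every point and $\tau(t) = \exp^0_{a_0}(t b_1) = a^0_t$. Writing $\tilde\rho$ in polar coordinates as in Theorem~\ref{thm:6} one has $\rho(t) = \bar a_t$ and $\sigma(t) = \hat a_t$, so Theorem~\ref{thm:6} gives precisely: $\hat a_t$ is a.e.~differentiable with $L(\hat a_t) = \integral{0}{1}{\abs{\hat a'_t}_{\hat a_t}}{dt}$; $\abs{\hat a'_t}_{\hat a_t} \le \abs{\bar a'_t}_{\bar a_t}$ wherever both are defined; and equality a.e.~holds if and only if $\tilde\rho = b_t$ is a reparametrization of $t b_1$, i.e.~(by injectivity of $\exp^0_{a_0}$) if and only if $\bar a_t$ is a reparametrization of $a^0_t$. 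Combining with the first step gives the length inequality and its equality case. It remains to note that $\tilde a_t$ runs from $a_0$ to $a_1$ — checked at the endpoints, where $\lambda_0 = 1$, $b_0 = 0$ and $\hat\beta_0 = 0$ give $\tilde a_0 = a_0$, while $\hat\beta_1$ equals the full radius so that $\hat a_1 = \bar a_1$ and hence $\tilde a_1 = \lambda_1 \bar a_1 = a_1$ — and that $\tilde a_t$ is rectifiable, which follows from absolute continuity of $\hat\beta_t$ (Lemma~\ref{lem:17}), smoothness of $\exp^0_{a_0}$, and piecewise differentiability of $\lambda_t$, via Lemma~\ref{lem:18}.

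I expect the main obstacle to be the first step's bookkeeping: verifying carefully that the splitting $a'_t = \lambda'_t\bar a_t + \lambda_t \bar a'_t$ is $\langle\cdot,\cdot\rangle_{a_t}$-orthogonal along the whole path (this is exactly where the constraint $\tr_{\bar a_t}\bar a'_t = 0$, equivalently $\tr_{a_0} b_t = 0$, is used) and pinning down the exact warping weight $\lambda_t^{n/2}$, so that the problem genuinely reduces to the single symmetric-space factor and Theorem~\ref{thm:6} can be quoted verbatim. A minor technical wrinkle is that $b_t$ need not be nonzero for $t > 0$ and is only piecewise differentiable, so Theorem~\ref{thm:6} should be applied on the subintervals where $b_t \ne 0$ and the conclusions concatenated.
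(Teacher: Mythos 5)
Your proposal is correct and follows essentially the same route as the paper's proof: the orthogonal splitting $a'_t = (\lambda'_t/\lambda_t)a_t + \lambda_t\bar{a}'_t$ into pure-trace and traceless parts, the resulting speed formula with the $\lambda_t^{n/2}$ weight, and the application of Theorem \ref{thm:6} to $N = \M_{x,\sqrt{A_0}}$ with $\rho = \bar{a}_t$, $\sigma = \hat{a}_t$, using Theorem \ref{thm:8} for the maximal-rank hypothesis. Your extra remarks on endpoints, rectifiability, and handling subintervals where $b_t = 0$ are fine refinements but do not change the argument.
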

\begin{proof}
                  Since $\bar{a}_t$ and $\hat{a}_t$ are the projections onto
  $\M_{x,\sqrt{A_0}}$ of $a_t$ and $\tilde{a}_t$, respectively, we have
  \begin{equation}\label{eq:30}
    \tr_{a_t}(\bar{a}'_t) = \lambda_t^{-2} \tr_{\bar{a}_t}(\bar{a}'_t)
    = 0,
  \end{equation}
  and similarly for $\hat{a_t}$.

  Now, note that $\bar{a}_t = \exp^0_{a_0}(b_t)$ and $\hat{a}_t =
  \exp^0_{a_0}(\hat{\beta}_t b_1)$.
              Thus, the hypotheses of Theorem \ref{thm:6} are satisfied with $N =
  \M_{x,\sqrt{A_0}}$, $v = b_1$, $\rho(t) = \bar{a}_t$, and $\sigma(t)
  = \hat{a}_t$.  In particular, we have $\abs{\hat{a}'_t}_{\hat{a}_t}
  \leq \abs{\bar{a}'_t}_{\bar{a}_t}$ for all $t$, with equality if and
  only if $\bar{a}_t$ is a reparametrization of $a^0_t$.

  Let us again consider the paths $a_t = \lambda_t \bar{a}_t$ and
  $\tilde{a}_t = \lambda_t \hat{a}_t$.  We have
  \begin{equation*}
    a'_t = \lambda'_t \bar{a}_t + \lambda_t \bar{a}'_t = \frac{\lambda'_t}{\lambda_t}
    a_t + \lambda_t \bar{a}'_t,
  \end{equation*}
  and similarly $\tilde{a}'_t = (\lambda'_t / \lambda_t) \tilde{a}_t +
  \lambda_t \hat{a}'_t$.  By \eqref{eq:30} and the discussion
  following Definition \ref{dfn:5}, these decompositions are
  orthogonal, since the first term in each is pure-trace and the
  second is traceless.  Thus we have
  \begin{equation*}
    \abs{a'_t}_{a_t}^2 =
    \left[
      \tr_{a_t} \left(
        \left(
          \frac{\lambda'_t}{\lambda_t} a_t
        \right)^2
      \right)
      + \tr_{\lambda_t \bar{a}_t}((\lambda_t \bar{a}'_t)^2)
    \right]
    \sqrt{\lambda_t \bar{A}_t} = \lambda_t^{n/2}
    \left[
      n
      \left(
        \frac{\lambda'_t}{\lambda_t}
      \right)^2
      + \abs{\bar{a}'_t}_{\bar{a}_t}^2
    \right]
    \sqrt{A_0},
  \end{equation*}
  and since $\abs{\hat{a}'_t}_{\hat{a}_t} \leq
  \abs{\bar{a}'_t}_{\bar{a}_t}$, with equality if and only if
  $\bar{a}_t$ is a reparametrization of $a^0_t$,
  \begin{equation*}
    \abs{\tilde{a}'_t}_{\tilde{a}_t}^2 =
    \lambda_t^{n/2}
    \left[
      n
      \left(
        \frac{\lambda'_t}{\lambda_t}
      \right)^2
      + \abs{\hat{a}'_t}_{\hat{a}_t}^2
    \right] \sqrt{A_0}
    \leq \abs{a'_t}_{a_t}^2.
  \end{equation*}
  The result of the lemma now follows.
\end{proof}

\subsubsection{Minimality of Riemannian geodesics}
\label{sec:minim-riem-geod}

The results of the last subsection allow us to prove, with one
additional estimate, the minimality of Riemannian geodesics in
$\Matx$.

\begin{theorem}\label{thm:9}
  Let $a_0, a_1 \in \Matx$.  Suppose that $a_1 = a_0 \exp(a_0^{-1}
  b)$, where $\tr_{a_0}(b_T^2) < (4 \pi)^2 / n$.  Let $a_t$ be the
  Riemannian geodesic between $a_0$ and $a_1$ as given in Theorems
  \ref{thm:4} and \ref{thm:7}.  Then $a_t$ is the unique minimal path
  (up to reparametrization) in $\overline{\Matx}$ between $a_0$ and
  $a_1$.
\end{theorem}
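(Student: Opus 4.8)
The plan is to reduce the statement to an explicit two-dimensional computation on a flat cone, using the orthogonal splitting $\Matx \cong \R_+ \times \M_{x,\sqrt{A_0}}$ together with Lemma~\ref{lem:12} to cut an arbitrary competitor down into that cone without increasing length. First I would dispose of the degenerate cases. If $a_0 = a_1$ there is nothing to prove, and if $b_T = 0$ then $a_1$ is a positive multiple of $a_0$ and $a_t$ is a reparametrized straight ray in $\R_+ a_0$ (see the remark after Theorem~\ref{thm:4}); along such a ray $\sqrt[4]{A}$ is strictly monotone, so Lemma~\ref{lem:8} shows any competitor has at least the same length, with equality only for a monotone reparametrization. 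Hence assume $b_T \neq 0$.

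Next I would introduce the two-dimensional slice
\[
  S := \bigl\{\, \lambda\, a_0 \exp\bigl(t\, a_0^{-1} b_T\bigr) \mid \lambda > 0,\ t \in \R \,\bigr\} \subset \Matx.
\]
It contains $a_0$ (at $\lambda = 1$, $t = 0$), it contains $a_1 = e^{\tr_{a_0}(b)/n}\, a_0 \exp(a_0^{-1} b_T)$, and it contains the geodesic $a_t$, since by \eqref{eq:29} the initial tangent $\exp_{a_0}^{-1}(a_1) = \psi(b)$ has traceless part proportional to $b_T$, so that $a_t$ has the form \eqref{eq:28} with a multiple of $b_T$ in the exponent. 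A direct computation of $\langle\cdot,\cdot\rangle$ restricted to $S$ — using that $\partial_\lambda$ (pure-trace) and $\partial_t$ (traceless) are orthogonal and that $\sqrt{\det A} = \lambda^{n/2}\sqrt{A_0}$ along $S$ — shows that in the coordinates
\[
  \rho := \frac{4}{\sqrt n}\,\sqrt[4]{A_0}\,\lambda^{n/4}, \qquad \theta := \frac{\sqrt n}{4}\,\bigl(\tr_{a_0}(b_T^2)\bigr)^{1/2}\, t
\]
the induced metric equals the flat $d\rho^2 + \rho^2\, d\theta^2$. In these coordinates $a_0 = (\rho_0, 0)$ and $a_1 = (\rho_1, \theta_1)$, where $\rho_0 = \frac{4}{\sqrt n}\sqrt[4]{A_0}$ and $\rho_1 = \frac{4}{\sqrt n}\sqrt[4]{A_1}$ (consistent with $d_x(\,\cdot\,,[0]) = \frac{4}{\sqrt n}\sqrt[4]{\,\cdot\,}$ from Proposition~\ref{prop:1}), and $\theta_1 = \frac{\sqrt n}{4}(\tr_{a_0}(b_T^2))^{1/2}$; the crucial observation is that the hypothesis $\tr_{a_0}(b_T^2) < (4\pi)^2/n$ is precisely the statement $\theta_1 < \pi$. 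The completion $\overline{S}$ adds the single cone point $[0]$, and the developing map $(\rho,\theta)\mapsto \rho e^{i\theta}$ (with $[0]\mapsto 0$) is continuous, sends $a_0 \mapsto \rho_0$ and $a_1 \mapsto \rho_1 e^{i\theta_1}$, and preserves the length of every rectifiable path; hence the length of \emph{any} path in $\overline{S}$ from $a_0$ to $a_1$ is at least $\abs{\rho_1 e^{i\theta_1} - \rho_0} = (\rho_0^2 + \rho_1^2 - 2\rho_0\rho_1\cos\theta_1)^{1/2}$, with equality only for monotone reparametrizations of the Euclidean segment from $\rho_0$ to $\rho_1 e^{i\theta_1}$. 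Since $\theta_1 < \pi$, that segment avoids the cone point and lies in $S$; since $a_t$ lies in $S$ and is a geodesic of the ambient manifold, it is a geodesic of $S$, hence (uniqueness of geodesics between points at angular distance $< \pi$ on a flat cone) coincides with that segment, so $L(a_t) = (\rho_0^2 + \rho_1^2 - 2\rho_0\rho_1\cos\theta_1)^{1/2}$.

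It then remains to compare $a_t$ with an arbitrary path $\sigma_t$ in $\overline{\Matx}$ from $a_0$ to $a_1$. If $\sigma_t$ meets $[0]$, then by the triangle inequality and Lemma~\ref{lem:9},
\[
  L(\sigma_t) \geq d_x(a_0,[0]) + d_x([0],a_1) = \rho_0 + \rho_1 > (\rho_0^2 + \rho_1^2 - 2\rho_0\rho_1\cos\theta_1)^{1/2} = L(a_t),
\]
strictly, because $\cos\theta_1 > -1$. Otherwise $\sigma_t$ stays in $\Matx$; writing $\sigma_t = \lambda_t\, a_0\exp(a_0^{-1}b_t)$ with $b_t$ traceless (possible by Theorem~\ref{thm:8}) and applying Lemma~\ref{lem:12}, one gets a path $\tilde\sigma_t = \lambda_t\hat a_t$ whose $\M_{x,\sqrt{A_0}}$-component runs along the radial geodesic in the direction $b_T$ (here one uses $\bar a_1 = \exp^0_{a_0}(b_T)$), so that $\tilde\sigma_t$ lies in $S$ and $L(\tilde\sigma_t) \leq L(\sigma_t)$; hence $L(\sigma_t) \geq L(\tilde\sigma_t) \geq L(a_t)$ by the previous paragraph. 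For equality throughout one needs equality in Lemma~\ref{lem:12}, which — since $\M_{x,\sqrt{A_0}} \cong SL(n)/SO(n)$ has no conjugate points, so Theorem~\ref{thm:6} applies with the differential of $\exp^0_{a_0}$ of maximal rank — forces the $\M_{x,\sqrt{A_0}}$-projection of $\sigma_t$ to be a reparametrized radial geodesic, hence $\sigma_t$ itself to lie in $S$; combined with equality in the cone estimate, this forces $\sigma_t$ to be a monotone reparametrization of the segment, i.e.\ of $a_t$. This yields both minimality and uniqueness. (For the uniqueness statement one may restrict attention to minimal paths, which by the displayed inequality cannot meet $[0]$ and, being Riemannian geodesics away from $[0]$, are in particular smooth, so that Lemma~\ref{lem:12} applies.)

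The bulk of the work — and the only place where the specific constant $(4\pi)^2/n$ enters — is the coordinate computation showing that $\langle\cdot,\cdot\rangle|_S$ takes the flat form $d\rho^2 + \rho^2\,d\theta^2$ with $\theta_1 < \pi$ exactly equivalent to $\tr_{a_0}(b_T^2) < (4\pi)^2/n$. Once that is in hand, and once one checks that the path produced by Lemma~\ref{lem:12} really lands in the two-dimensional slice $S$ (which hinges on recognizing $b_T$ as the relevant traceless direction), the remainder is the already-established machinery of Lemma~\ref{lem:12} and Theorem~\ref{thm:6} together with elementary plane geometry.
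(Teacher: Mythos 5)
Your argument is correct, but it follows a genuinely different route from the paper's. The paper's proof splits into two steps: first, since $\exp_{a_0}$ is a diffeomorphism onto $V = \im(\exp_{a_0})$ (Theorem \ref{thm:7}), Theorem \ref{thm:6} gives minimality and uniqueness among paths staying in $V$; second, competitors leaving $V$ are excluded either by Lemma \ref{lem:12} (paths whose traceless displacement reaches the critical size $(4\pi)^2/n$ can be shortened into $V$) or, for paths through $[0]$, by the explicit computation of $L(a_t) = \abs{\psi(b)}_{a_0}$ in \eqref{eq:39} and the strict estimate \eqref{eq:34} against $\tfrac{4}{\sqrt n}(\sqrt[4]{A_0}+\sqrt[4]{A_1})$. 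You instead push everything into the two-dimensional slice $S$ --- which is exactly the plane $P$ that the paper only introduces later, in Lemma \ref{lem:11}, for the CAT(0) argument --- verify that the induced metric is the flat cone metric $d\rho^2 + \rho^2\,d\theta^2$ with $[0]$ as cone point, and read off both the length formula (your law-of-cosines expression is \eqref{eq:39}) and the strict comparison with paths through $[0]$ from plane geometry, the hypothesis $\tr_{a_0}(b_T^2) < (4\pi)^2/n$ becoming precisely $\theta_1 < \pi$. Both proofs lean on Lemma \ref{lem:12} (hence on Theorem \ref{thm:6}) to reduce a general competitor to the relevant radial direction, and your use of its equality case for uniqueness, with the maximal-rank hypothesis supplied by Theorem \ref{thm:8}, mirrors what the paper does implicitly. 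What your route buys is conceptual transparency --- the role of the constant $(4\pi)^2/n$ as a cone angle $\pi$ is made explicit, and the distance formula of Theorem \ref{thm:3} drops out geometrically --- at the cost of the extra (routine) verifications you correctly identify: the coordinate computation on $S$, the lifting/developing argument on the infinite cone, and the equality analysis in the degenerate case $b_T = 0$, where your appeal to Lemma \ref{lem:8} for uniqueness requires inspecting its proof (equality forces the velocity to be pure-trace a.e.), exactly as the paper does in the proof of Theorem \ref{thm:10}. One shared caveat, not a new gap: applying Lemma \ref{lem:12} requires the polar decomposition of the competitor to make sense (the traceless component nonvanishing for $t>0$), a technicality present in the paper's own use of that lemma as well.
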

\begin{proof}
  Since, by Theorem \ref{thm:7}, $\exp_{a_0}$ is a diffeomorphism onto
  its image, Theorem \ref{thm:6} implies that $a_t$ is the unique
  minimal path among the class of paths that lie completely within the
  image of $\exp_{a_0}$.  Therefore, we must show that there are no
  shorter paths that exit $V := \im(\exp_{a_0})$.

  By Proposition \ref{prop:1} and Theorem \ref{thm:7}, the boundary of
  $V$ in $\overline{\Matx}$ is
  \begin{equation}\label{eq:31}
    \partial V = \{ [0] \} \cup
    \left\{
      c \in \Matx \midmid c = a_0
      \exp(a_0^{-1} \gamma),\ \tr_{a_0}(\gamma_T^2) = \frac{(4 \pi)^2}{n}
    \right\}.
  \end{equation}

  So first, suppose that there exists a path $\alpha_t = \lambda_t a_0
  \exp(a_0^{-1} c_t)$ in $\Matx$ between $a_0$ and $a_1$ with
  $L(\alpha_t) \leq L(a_t)$, $\tr_{a_0}(c_t) \equiv 0$, $\lambda_t \in
  \R$, and $\tr_{a_0}(c_{t_0}^2) \geq (4 \pi)^2 / n$ for some $t_0 \in
  (0, 1)$.  Since $\tr_{a_0}(b_T^2) < (4 \pi)^2 / n$, we may deduce
  from Lemma \ref{lem:12} that there exists a path $\tilde{a}_t$ lying
  completely in $V$ with $L(\tilde{a}_t) < L(\alpha_t) \leq L(a_t)$, a
  contradiction to the minimality of $a_t$ among paths lying in $V$.

  Now, consider any path $\alpha_t$ from $a_0$ to $a_1$ that passes
  through $[0]$.  By Proposition \ref{prop:1}, we have that $d_x(a_0,
  [0]) = \frac{4}{\sqrt{n}} \sqrt[4]{A_0}$, and similarly for $a_1$.
  Therefore $L(\alpha_t) \geq \frac{4}{\sqrt{n}} (\sqrt[4]{A_0} +
  \sqrt[4]{A_1})$.  Thus, if we can show that $L(a_t) <
  \frac{4}{\sqrt{n}} (\sqrt[4]{A_0} + \sqrt[4]{A_1})$, the theorem
  will be proved.

  To show this, let $\psi$ be as in Theorem \ref{thm:7}, so that $a_1
  = \exp_{a_0}(\psi(b))$.  Since the case $b_T = 0$ is trivial, we
  simply estimate $L(a_t) = \abs{\psi(b)}_{a_0}$ for $b_T \neq 0$.  We
  first have
  {\allowdisplaybreaks
    \begin{equation}\label{eq:33}
      \begin{aligned}
        \tr_{a_0}(\psi(b)^2) &= \frac{16}{n^2} \left[ \exp \left(
            \frac{\tr_{a_0} b}{2} \right) \cos^2 \left( \frac{\sqrt{n
                \tr_{a_0}(b_T^2)}}{4} \right)
        \right. \\
        &\left. \qquad \qquad {} - 2 \exp \left( \frac{\tr_{a_0} b}{4}
          \right) \cos \left( \frac{\sqrt{n \tr_{a_0}(b_T^2)}}{4}
          \right) + 1
        \right] \tr_{a_0}(a_0^2) \\
        &\qquad {} + \frac{16}{n \tr_{a_0}(b_T^2)} \exp \left(
          \frac{\tr_{a_0} b}{2} \right) \sin^2 \left( \frac{\sqrt{n
              \tr_{a_0}(b_T^2)}}{4} \right)
        \tr_{a_0}(b_T^2) \\
        &= \frac{16}{n} \left( \exp \left( \frac{\tr_{a_0} b}{2}
          \right) - 2 \exp \left( \frac{\tr_{a_0} b}{4} \right) \cos
          \left( \frac{\sqrt{n \tr_{a_0}(b_T^2)}}{4} \right) + 1
        \right).
      \end{aligned}
    \end{equation}
  }On the other hand, using the formula $\exp(\tr_{a_0}(b)) =
  \det(\exp(a_0^{-1} b))$, we have $\exp(\frac{\tr_{a_0} b}{2}) =
  \sqrt{\det \exp(a_0^{-1} b)} = \frac{\sqrt{A_1}}{\sqrt{A_0}}$.  Therefore,
  \begin{equation}\label{eq:39}
    \begin{aligned}
      \abs{\psi(b)}_{a_0}^2 &= \tr_{a_0}(\psi(b)^2) \sqrt{A_0} \\
      &= \frac{16}{n} \left( \frac{\sqrt{A_1}}{\sqrt{A_0}} - 2 \frac{\sqrt[4]{A_1}}{\sqrt[4]{A_0}}
        \cos \left( \frac{\sqrt{n \tr_{a_0}(b_T^2)}}{4} \right) + 1
      \right) \sqrt{A_0} \\
      &= \frac{16}{n}
      \left(
        \sqrt{A_0} - 2 \sqrt[4]{A_0} \sqrt[4]{A_1} \cos \left(
          \frac{\sqrt{n \tr_{a_0}(b_T^2)}}{4} \right) + \sqrt{A_1}
      \right).
    \end{aligned}
  \end{equation}
  Since $\tr_{a_0}(b_T^2) < (4\pi)^2 / n$, the argument of cosine in
  the above equation lies strictly between $0$ and $\pi$, and
  therefore we can estimate
                                                          \begin{equation}\label{eq:34}
    \abs{\psi(b)}_{a_0}^2 <
                                            \frac{16}{n}
    \left(
      \sqrt[4]{A_0} + \sqrt[4]{A_1}
    \right)^2,
  \end{equation}
  as was to be shown.
\end{proof}

\subsubsection{Geodesics in $\overline{\Matx}$}
\label{sec:geod-overl}

Theorem \ref{thm:9} will now allow us to determine that, for any
element $a \in \Matx$, the singular point $[0]$ is the unique closest
point to $a$ on the boundary of the image of $\exp_a$.  This will help
to find geodesics between points that do not have a Riemannian
geodesic connecting them.

\begin{lemma}\label{lem:16}
  Let $a_0 \in \Matx$, and let $a_1 \in \partial V \subset
  \overline{\Matx}$, where $V$ denotes the image of $\exp_{a_0}$
  (cf.~Theorems \ref{thm:7} and \eqref{eq:31}).  Then
  \begin{equation}\label{eq:32}
    d_x(a_0, a_1) = \frac{4}{\sqrt{n}}
    \left(
      \sqrt[4]{A_1} + \sqrt[4]{A_0}
    \right).
  \end{equation}
  In particular, $d_x(a_0, \partial V) = \frac{4}{\sqrt{n}}
  \sqrt[4]{A_0}$, and this distance is realized uniquely for $[0]
  \in \partial V$.
\end{lemma}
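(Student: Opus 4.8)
The plan is to split the proof according to the two pieces of $\partial V$ identified in \eqref{eq:31}. If $a_1 = [0]$, then $\sqrt[4]{A_1} = 0$ and \eqref{eq:32} is precisely the equality $d_x(a_0, [0]) = \tfrac{4}{\sqrt n}\sqrt[4]{A_0}$ already recorded in Proposition \ref{prop:1}, so there is nothing to do. The substance lies in the case $a_1 \in \Matx$ with $a_1 = a_0\exp(a_0^{-1}\gamma)$ and $\tr_{a_0}(\gamma_T^2) = (4\pi)^2/n$; here $a_1 \notin V$, so there is no Riemannian geodesic reaching it of the form in Theorem \ref{thm:4}, and instead I would compute $d_x(a_0, a_1)$ by approximating $a_1$ from inside $V$ and letting formula \eqref{eq:39} pass to the limit.

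Concretely, set $\gamma^{(k)} := \tfrac1n(\tr_{a_0}\gamma)\,a_0 + \bigl(1 - \tfrac1k\bigr)\gamma_T$ and $a_1^{(k)} := a_0\exp(a_0^{-1}\gamma^{(k)})$. Then $(\gamma^{(k)})_T = (1 - \tfrac1k)\gamma_T$, so $\tr_{a_0}\bigl((\gamma^{(k)})_T^2\bigr) = (1 - \tfrac1k)^2 (4\pi)^2/n < (4\pi)^2/n$, and hence $a_1^{(k)} \in V$ by Theorem \ref{thm:7}; moreover $\tr_{a_0}\gamma^{(k)} = \tr_{a_0}\gamma$, so that $\sqrt[4]{A_1^{(k)}} = \sqrt[4]{A_1}$ (using $\det\exp(a_0^{-1}\gamma^{(k)}) = \exp(\tr_{a_0}\gamma^{(k)})$). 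By Theorems \ref{thm:9} and \ref{thm:7} the minimal path from $a_0$ to $a_1^{(k)}$ is the Riemannian geodesic of length $\abs{\psi(\gamma^{(k)})}_{a_0}$, and since $\sqrt{n\,\tr_{a_0}((\gamma^{(k)})_T^2)}/4 = (1 - \tfrac1k)\pi$, equation \eqref{eq:39} gives
\[
  d_x(a_0, a_1^{(k)})^2 = \frac{16}{n}\left( \sqrt{A_0} - 2\sqrt[4]{A_0}\sqrt[4]{A_1}\cos\bigl((1 - \tfrac1k)\pi\bigr) + \sqrt{A_1} \right).
\]
As $k \to \infty$ the cosine tends to $-1$, so $d_x(a_0, a_1^{(k)}) \to \tfrac{4}{\sqrt n}\bigl(\sqrt[4]{A_0} + \sqrt[4]{A_1}\bigr)$. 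Since $\gamma^{(k)} \to \gamma$, we have $a_1^{(k)} \to a_1$ in the manifold topology, hence---because $a_1 \in \Matx$---also $d_x(a_1^{(k)}, a_1) \to 0$ by the description of the topology on $\overline{\Matx}$ in Proposition \ref{prop:1}; the triangle inequality $\abs{d_x(a_0, a_1) - d_x(a_0, a_1^{(k)})} \le d_x(a_1, a_1^{(k)})$ then yields \eqref{eq:32}.

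For the final assertion, minimizing the right-hand side of \eqref{eq:32} over $a_1 \in \partial V$ is the same as minimizing $\sqrt[4]{A_1}$, a quantity that is $\ge 0$ with equality exactly when $\det A_1 = 0$; among the elements of $\partial V$ this occurs only for $a_1 = [0]$, since every other element lies in $\Matx$ and is positive definite. Therefore $d_x(a_0, \partial V) = \tfrac{4}{\sqrt n}\sqrt[4]{A_0}$, and this distance is realized uniquely at $[0]$.

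I do not expect a genuine obstacle here: once Theorems \ref{thm:7} and \ref{thm:9}, Proposition \ref{prop:1}, and the formula \eqref{eq:39} are in hand, the argument is bookkeeping. The one place to be careful is the transition from distances inside $\Matx$ to the extended metric on $\overline{\Matx}$, which is why Proposition \ref{prop:1} (rather than mere convergence in the manifold topology) must be invoked to get $d_x(a_1^{(k)}, a_1) \to 0$; one should also check that the approximating sequence keeps $\tr_{a_0}((\gamma^{(k)})_T^2)$ strictly below $(4\pi)^2/n$ so that Theorem \ref{thm:9} genuinely applies, which the above choice of $\gamma^{(k)}$ arranges.
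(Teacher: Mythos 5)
Your proof is correct and takes essentially the same route as the paper's: both dispose of $a_1 = [0]$ via Proposition \ref{prop:1}, and for $a_1 \neq [0]$ both approximate $a_1$ from inside $V$, use minimality of Riemannian geodesics (Theorem \ref{thm:9}) so that the distance to the approximants is $\abs{\psi(\cdot)}_{a_0}$, and pass to the limit in \eqref{eq:39}. The only cosmetic difference is that the paper starts from an arbitrary $d_x$-convergent sequence in $V$ and uses Proposition \ref{prop:1} to get manifold-topology convergence, whereas you build an explicit sequence converging in the manifold topology and invoke Proposition \ref{prop:1} in the reverse direction.
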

\begin{proof}
  If $a_1 = [0]$, then by Proposition \ref{prop:1}, $d_x(a_0, a_1)
  = \frac{4}{\sqrt{n}} \sqrt[4]{A_0}$. Thus \eqref{eq:32} is proved
  in this case.

  If $a_1 \neq [0]$, then we write $a_1 = a_0 \exp(a_0^{-1} b)$ with
  $\tr_{a_0} (b_T^2) = (4 \pi)^2 / n$.  Let $c_k = a_0 \exp(a_0^{-1}
  b_k)$ be a sequence of elements of $V$ with $d_x (c_k, a_1)
  \rightarrow 0$; in particular, $d_x(a_0, c_k) \rightarrow d_x(a_0,
  a_1)$.  By Proposition \ref{prop:1}, $\{ c_k \}$ (resp.~$\{ b_k \}$)
  converges in the standard topology on $\Matx$ to $a_1$ (resp.~$b$).
  In particular, $\tr_{a_0} ((b_k)_T^2) \rightarrow (4 \pi)^2 / n$.
  Since $c_k \in V$, we may conclude
                  \begin{equation*}
    d_x(a_0, a_1)^2 = \lim_{k \rightarrow \infty} d_x(a_0, c_k)^2 =
    \lim_{k \rightarrow \infty} \abs{\psi(b_k)}_{a_0}^2 = 
    \frac{16}{n}
    \left(
      \sqrt[4]{A_1} + \sqrt[4]{A_0}
    \right)^2,
  \end{equation*}
  where we have used \eqref{eq:39} in the last equality.
\end{proof}

We are now in a position to prove the main result of this section.

\begin{theorem}\label{thm:10}
  There exists a unique (up to reparametrization) minimal path between
  any two points $a_0, a_1 \in \overline{\Matx}$.  If there exists $b
  \in \satx$ such that $a_1 = a_0 \exp(a_0^{-1} b)$, then this minimal
  path is given by
  \begin{enumerate}
  \item \label{item:14} the Riemannian geodesic connecting $a_0$ and
    $a_1$ (cf.~Theorems \ref{thm:4} and \ref{thm:7}), if we have
    $\tr_{a_0}(b_T^2) < (4 \pi)^2 / n$;
  \item \label{item:15} the concatenation of the straight segments
    from $a_0$ to $[0]$ and from $[0]$ to $a_1$, if we have $\tr_{a_0}(b_T^2)
    \geq (4 \pi)^2 / n$.
  \end{enumerate}
  Otherwise either $a_0$ or $a_1$ is $[0]$, and the minimal path is
  the straight segment between the two.
\end{theorem}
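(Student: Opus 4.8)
The plan is to dispose of the three alternatives in turn, the only substantial one being case \eqref{item:15}. Case \eqref{item:14} is exactly Theorem \ref{thm:9}. The ``otherwise'' case is precisely the situation that one of $a_0,a_1$ equals $[0]$: if both lie in $\Matx$ one may take $b=a_0\log(a_0^{-1}a_1)$, which is symmetric and is in fact the unique such $b$, while conversely $a_0\exp(a_0^{-1}b)$ is always positive definite. Assume then that $a_1=[0]$ (the case $a_0=[0]$ is symmetric, and $a_0=a_1=[0]$ trivial). By the discussion following Lemma \ref{lem:8}, the segment $t\mapsto(1-t)a_0$ has length $\tfrac{4}{\sqrt n}\sqrt[4]{A_0}=d_x(a_0,[0])$ and hence is minimal; and for uniqueness the pointwise estimate $\abs{\partial_t\sqrt[4]{A_t}}\le\tfrac{\sqrt n}{4}\abs{a'_t}_{a_t}$ from the proof of Lemma \ref{lem:8} (an equality exactly when $a'_t$ is pure-trace) gives, for any path $a_t$ from $a_0$ to $[0]$ not hitting $[0]$ earlier, the chain $L(a_t)\ge\tfrac{4}{\sqrt n}\int_0^1\abs{\partial_t\sqrt[4]{A_t}}\,dt\ge\tfrac{4}{\sqrt n}\bigl|\int_0^1\partial_t\sqrt[4]{A_t}\,dt\bigr|=\tfrac{4}{\sqrt n}\sqrt[4]{A_0}$, with equality forcing $a'_t$ pure-trace a.e.\ and $\sqrt[4]{A_t}$ monotone, i.e.\ $a_t$ a monotone reparametrization of $(1-t)a_0$ (a path meeting $[0]$ at an interior time is split there).

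So fix $a_0,a_1\in\Matx$ with $a_1=a_0\exp(a_0^{-1}b)$ and $\rho_0:=\sqrt{\tr_{a_0}(b_T^2)}\ge 4\pi/\sqrt n$ (case \eqref{item:15}). By Proposition \ref{prop:1} the concatenation of the straight segments $a_0\to[0]$ and $[0]\to a_1$ has length $d_x(a_0,[0])+d_x([0],a_1)=\tfrac{4}{\sqrt n}(\sqrt[4]{A_0}+\sqrt[4]{A_1})$, and the triangle inequality bounds $d_x(a_0,a_1)$ above by the same quantity; so it suffices to prove the matching lower bound for the length of an arbitrary path $\alpha_t$ from $a_0$ to $a_1$, together with the corresponding equality discussion. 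If $\alpha_t$ meets $[0]$, split at the first such time to get $L(\alpha_t)\ge d_x(a_0,[0])+d_x([0],a_1)$, equality holding (by the uniqueness of minimal segments into $[0]$ just established) only when $\alpha_t$ is, up to reparametrization, the asserted concatenation.

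It remains to rule out paths $\alpha_t$ that stay in $\Matx$. Write $\alpha_t=\lambda_t a_0\exp(a_0^{-1}c_t)$ with $\lambda_t\in\R_+$ and $\tr_{a_0}c_t\equiv 0$; a short computation using $\exp(\tr_{a_0}(\,\cdot\,))=\det(\exp(a_0^{-1}(\,\cdot\,)))$ gives $c_1=b_T$ and $\lambda_1^{n/2}=\sqrt{A_1}/\sqrt{A_0}$. Lemma \ref{lem:12} now replaces $\alpha_t$ by $\tilde\alpha_t=\lambda_t\hat a_t$ with $L(\tilde\alpha_t)\le L(\alpha_t)$, where $\hat a_t$ is a monotone reparametrization of the radial geodesic $s\mapsto\exp^0_{a_0}(s\,b_T)$, $s\in[0,1]$, of $\M_{x,\sqrt{A_0}}$; in particular $\tilde\alpha_t$ again avoids $[0]$ (as $\lambda_t>0$) and its image lies on the surface $\Sigma:=\{\lambda\exp^0_{a_0}(s\,b_T)\mid\lambda>0,\ s\in[0,1]\}$. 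Using the orthogonal splitting of $T_a\Matx$ into pure-trace and traceless parts (the discussion after Definition \ref{dfn:5}), Theorem \ref{thm:8}, and the fact that $\det A\equiv\det A_0$ along $\M_{x,\sqrt{A_0}}$, one checks that in the coordinates $u:=\tfrac{4}{\sqrt n}\sqrt[4]{A_0}\,\lambda^{n/4}$ and $\theta:=\tfrac{\sqrt n\,\rho_0}{4}\,s$ the metric $\langle\cdot,\cdot\rangle$ restricted to $\Sigma$ is exactly the flat metric $du^2+u^2\,d\theta^2$, whose apex $\{u=0\}$ is the singular point $[0]$. Thus $L(\tilde\alpha_t)$ is the length, in a flat wedge, of a curve running from the point at radius $u_0=\tfrac{4}{\sqrt n}\sqrt[4]{A_0}$ and angle $0$ to the point at radius $u_1=\tfrac{4}{\sqrt n}\sqrt[4]{A_1}$ and angle $\theta_1=\tfrac14\sqrt{n\,\tr_{a_0}(b_T^2)}$ --- and the hypothesis $\tr_{a_0}(b_T^2)\ge(4\pi)^2/n$ is precisely what makes $\theta_1\ge\pi$. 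In such a flat wedge, two points whose angular coordinates differ by at least $\pi$ are at distance equal to the sum of their radii, realized uniquely by the path that goes radially to the apex and then radially out; since $\tilde\alpha_t$ avoids the apex, $L(\alpha_t)\ge L(\tilde\alpha_t)>u_0+u_1=\tfrac{4}{\sqrt n}(\sqrt[4]{A_0}+\sqrt[4]{A_1})$. Hence no path staying in $\Matx$ is minimal, $d_x(a_0,a_1)=\tfrac{4}{\sqrt n}(\sqrt[4]{A_0}+\sqrt[4]{A_1})$, and the unique minimal path is the claimed concatenation through $[0]$.

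The main obstacle is this last step: realizing that Lemma \ref{lem:12}, after radialization, confines any competitor to a flat (totally geodesic) surface $\Sigma$ whose cone point is exactly $[0]$, and then carrying out the elementary-but-careful flat-cone distance estimate, in which $\tr_{a_0}(b_T^2)\ge(4\pi)^2/n$ turns into the endpoints sitting at angular distance $\ge\pi$ from the apex and so forces every minimizer through it. Everything else is bookkeeping: case \eqref{item:14} is quoted from Theorem \ref{thm:9}, and the degenerate case together with the uniqueness of minimal segments into $[0]$ rest on the one-variable estimate already present in the proof of Lemma \ref{lem:8}.
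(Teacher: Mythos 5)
Your argument is correct, and for the main case it takes a genuinely different route from the paper. The easy parts coincide: case \eqref{item:14} is quoted from Theorem \ref{thm:9}, and your uniqueness of the segment from $a_0$ to $[0]$ (via the equality case of the estimate in Lemma \ref{lem:8} plus monotonicity of $\sqrt[4]{A_t}$) is an equivalent variant of the paper's argument, which instead uses the orthogonal pure-trace/traceless splitting exactly as in Lemma \ref{lem:12}. For case \eqref{item:15}, however, the paper argues through the boundary of the exponential image: Lemma \ref{lem:16} computes $d_x(a_0,c)=\tfrac{4}{\sqrt n}\bigl(\sqrt[4]{A_0}+\sqrt[4]{C}\bigr)$ for $c\in\partial V$, so $[0]$ is the unique nearest point of $\partial V$, and since every competitor from $a_0$ to $a_1$ must cross $\partial V$ (and $\partial W=\partial\,\im(\exp_{a_1})$), the concatenation through $[0]$ is the unique minimizer. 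You bypass Lemma \ref{lem:16} entirely: after radializing with Lemma \ref{lem:12} you confine the competitor to the two-dimensional surface $\Sigma$, check that the induced metric in your $(u,\theta)$ coordinates is the flat wedge metric $du^2+u^2\,d\theta^2$ with apex $[0]$ and opening $\tfrac{\sqrt n}{4}\sqrt{\tr_{a_0}(b_T^2)}\ge\pi$, and conclude by the elementary fact that in such a wedge two points at angular separation at least $\pi$ are joined minimally only through the apex, so apex-avoiding paths are strictly longer than $u_0+u_1$. The underlying computation is the same in both treatments---your flat-wedge metric is precisely what produces the law-of-cosines identity \eqref{eq:39} that the paper feeds into Theorem \ref{thm:9} and Lemma \ref{lem:16}, and the planar picture resurfaces in the paper only later, in Lemma \ref{lem:11}---but your organization makes the cone structure and the geometric meaning of the threshold $(4\pi)^2/n$ (angular separation $\ge\pi$) explicit and yields the distance formula $d_x(a_0,a_1)=\tfrac{4}{\sqrt n}\bigl(\sqrt[4]{A_0}+\sqrt[4]{A_1}\bigr)$ directly, at the modest cost of verifying the induced metric on $\Sigma$ and carrying out the wedge estimate, whereas the paper's route is shorter once Lemma \ref{lem:16} is available.
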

\begin{proof}
  We have already shown statement \eqref{item:14} in Theorem
  \ref{thm:9}.

  Let us now show that the unique minimal path between $a_0 \in \Matx$
  and $[0]$ is the straight segment $a_t = t a_0$.  In the discussion
  preceding Lemma \ref{lem:9}, we remarked that this segment is
  minimal, so it remains to show uniqueness.  Let $\hat{a}_t = a_0
  \exp(a_0^{-1} b_t)$, $t \in [0,1]$, be a differentiable path with
  $\tr_{a_0} b_t = 0$ for all $t$.  Furthermore, let $\lambda_t$ be a
  differentiable family of nonnegative real numbers with $\lambda_1 =
  0$, and define a path $\tilde{a}_t := \lambda_t \hat{a}_t$ from
  $a_0$ to $[0]$.  (Note that any path from $a_0$ to $[0]$ can be
  written in this way.)  As in the proof of Lemma \ref{lem:12}, we
  have
  \begin{equation*}
    \abs{\tilde{a}'_t}_{a_t}^2 =
    \left[
      n
      \left(
        \frac{\lambda'_t}{\lambda_t}
      \right)^2
      + \abs{\hat{a}'_t}_{\hat{a}_t}^2
    \right].
  \end{equation*}
  This quantity is minimized when (i) $\abs{\hat{a}'_t}_{\hat{a}_t} = 0$,
  i.e., when $b_t = 0$, for all $t$, and (ii) $\lambda'_t \leq 0$ for
  all $t$.  But then $\tilde{a}_t$ is simply a reparametrization of
  $a_t = t a_0$, as desired.

  Finally, to show \eqref{item:15}, we note that any path from $a_0$
  to $a_1$ must necessarily pass through $\partial V$
  (cf.~\eqref{eq:31}).  By the above argument and Lemma \ref{lem:16},
  the unique minimal path between $a_0$ and $\partial V$ is the
  straight segment between $a_0$ and $[0]$.  If additionally $W :=
  \im(\exp_{a_1})$, then we also see that any path from $a_1$ to $a_0$
  must pass through $\partial W \subset \overline{\Matx}$.  Since the
  unique minimal path from $a_1$ to $\partial W$ is the straight
  segment from $a_1$ to $[0]$, and the concatenation of these straight
  segments is continuous and rectifiable, this concatenation is the
  unique minimal path connecting $a_0$ and $a_1$.
\end{proof}

In the two following subsections, we work out the consequences of this
theorem: the continuous dependence of geodesics in $\overline{\Matx}$
on their endpoints, and formulas for geodesics and distance between
elements of the completion $\overline{\M}$ of the manifold of
Riemannian metrics.

\subsection{Continuous dependence of geodesics}
\label{sec:cont-depend-geod}

The goal of this subsection is to prove the following theorem, which
we will require to show the CAT(0) property for $\overline{\Matx}$.

\begin{theorem}\label{thm:12}
  Geodesics in $\overline{\Matx}$ vary continuously with their
  endpoints.
\end{theorem}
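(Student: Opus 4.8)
The plan is to fix an arbitrary convergent sequence $(a_0^k, a_1^k) \to (a_0, a_1)$ in $\overline{\Matx} \times \overline{\Matx}$ and to show that the unique minimal paths $\gamma^k \colon [0,1] \to \overline{\Matx}$ from $a_0^k$ to $a_1^k$, each parametrized proportionally to $d_x$-arc length, converge uniformly to the minimal path $\gamma$ from $a_0$ to $a_1$ given by Theorem \ref{thm:10}. Since this amounts to showing that every subsequence has a further subsequence with this property, I would pass to subsequences freely. As a first observation, since $d_x$ is a metric (hence continuous), $L(\gamma^k) = d_x(a_0^k, a_1^k) \to d_x(a_0, a_1) = L(\gamma) =: \ell$, so the $\gamma^k$ are uniformly $(\ell+1)$-Lipschitz for $k$ large.

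The heart of the argument, and the step I expect to be the main obstacle, is an \emph{equiboundedness} claim: there is an $R > 0$ with $\gamma^k([0,1]) \subseteq K := \{ a \in \Matx \mid \abs{a}_{g(x)} \leq R \} \cup \{ [0] \}$ for all large $k$, and $K$ is compact in $\overline{\Matx}$. Compactness of $K$ would follow from Proposition \ref{prop:1}: a sequence in $K$ has a subsequence along which $\det A_k \to \eta \geq 0$; if $\eta = 0$ then $\sqrt[4]{A_k} \to 0$, so it converges to $[0]$, while if $\eta > 0$ the eigenvalues of $A_k = g(x)^{-1}a_k$ are eventually bounded above by $R$ with product at least $\eta/2$, hence also bounded below, so a further subsequence converges in $\Matx$. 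For the claim itself I would use the explicit description in Theorem \ref{thm:10}. When $\gamma^k$ is a concatenation of straight segments through $[0]$ (or one endpoint is $[0]$), one has $\abs{\gamma^k(t)}_{g(x)} \leq \max(\abs{a_0^k}_{g(x)}, \abs{a_1^k}_{g(x)})$, which is bounded since $a_i^k \to a_i \in \Matx$. When $\gamma^k$ is a Riemannian geodesic, write $a_1^k = a_0^k \exp((a_0^k)^{-1}b^k)$ with $\tr_{a_0^k}((b^k)_T^2) < (4\pi)^2/n$; then $\gamma^k$ is of the form \eqref{eq:28}, and from \eqref{eq:21} together with the fact that $\det A_1^k/\det A_0^k$ is bounded above and away from $0$, the parameters $q(1), r(1)$ (hence $q(t)^2+r(t)^2$ on $[0,1]$) are bounded. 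Since the exponent matrix in \eqref{eq:28} has eigenvalues of absolute value at most $\tfrac{4}{\sqrt n}\abs{\arctan(r(t)/q(t))} < \tfrac{4\pi}{\sqrt n}$ (using the range of $\arctan$ recorded in Theorem \ref{thm:4}), the eigenvalues of $(a_0^k)^{-1}\gamma^k(t)$ are bounded above uniformly in $k$ and $t$; combined with $g(x)^{-1}a_0^k \to g(x)^{-1}a_0$, this yields $\abs{\gamma^k(t)}_{g(x)} \leq R$.

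Granting equiboundedness, an Arzel\`a--Ascoli argument applied to the equi-Lipschitz maps $[0,1] \to K$ gives, after passing to a subsequence, a uniform limit $\sigma \colon [0,1] \to K \subseteq \overline{\Matx}$, a path from $a_0$ to $a_1$. By lower semicontinuity of length under pointwise convergence, $L(\sigma) \leq \liminf_k L(\gamma^k) = \ell$, while $L(\sigma) \geq d_x(a_0, a_1) = \ell$; hence $L(\sigma) = \ell$ and $\sigma$ is a minimal path. Moreover $d_x(\sigma(s), \sigma(t)) = \lim_k d_x(\gamma^k(s), \gamma^k(t)) = \ell\abs{s-t}$ for all $s,t$, so $\sigma$ is parametrized proportionally to $d_x$-arc length, and therefore $\sigma = \gamma$ by the uniqueness clause of Theorem \ref{thm:10}. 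Since every subsequence of $(\gamma^k)$ thus has a further subsequence converging uniformly to $\gamma$, the whole sequence converges uniformly to $\gamma$, which is the asserted continuous dependence.

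As indicated, the delicate point is the equiboundedness step. Because $\overline{\Matx}$ is not locally compact, one cannot appeal to compactness of closed metric balls; one must extract from the explicit geodesic formulas of Theorems \ref{thm:4} and \ref{thm:10} a uniform bound not merely on the $d_x$-size of the points of $\gamma^k$ (i.e.\ on $\sqrt[4]{A}$, which is automatic from the length bound and Lemma \ref{lem:8}) but on the spread of their eigenvalues, so that the images remain in a genuinely compact subset of $\overline{\Matx}$ — one whose only non-interior point is the singular point $[0]$. In particular the transitional situation, in which $\tr_{a_0}(b_T^2) = (4\pi)^2/n$ for the limiting pair and the $\gamma^k$ are Riemannian geodesics degenerating onto the broken geodesic through $[0]$, is absorbed uniformly into this argument rather than requiring a separate computation.
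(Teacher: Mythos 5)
Your route (constant-speed parametrization, a compactness/Arzel\`a--Ascoli extraction, lower semicontinuity of length, and then the uniqueness clause of Theorem \ref{thm:10}) is genuinely different from the paper's proof, which argues case by case from the explicit geodesic formulas; but as written it has a gap exactly at the step you flag as the heart of the argument. The equiboundedness claim is false when one of the \emph{limit} endpoints is $[0]$: convergence $a_1^k \xrightarrow{d_x} [0]$ controls only $\det A_1^k$ (Proposition \ref{prop:1}), not $|a_1^k|_{g(x)}$. For instance, with $A_1^k = \mathrm{diag}(k, k^{-2}, 1, \dots, 1)$ one has $\sqrt[4]{A_1^k} \to 0$, hence $a_1^k \to [0]$ in $d_x$, while $|a_1^k|_{g(x)} \to \infty$; for large $k$ these points also fail the condition $\tr_{a_0^k}((b^k)_T^2) < (4\pi)^2/n$, so $\gamma^k$ is the broken geodesic through $[0]$, and its second leg $\{\lambda a_1^k : \lambda \in [0,1]\}$ leaves every set of the form $\{\,|a|_{g(x)} \le R\,\} \cup \{[0]\}$. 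Your justification ``which is bounded since $a_i^k \to a_i \in \Matx$'' silently assumes both limiting endpoints lie in $\Matx$, whereas the theorem (and the paper's first case, $a_1 = [0]$) must cover the singular endpoint as well; note also that in the Riemannian-geodesic branch your bound on $q(1)^2 + r(1)^2 = \sqrt{A_1^k}/\sqrt{A_0^k}$ likewise needs $\det A_0^k$ bounded below, i.e.\ again excludes $a_0 = [0]$.

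The gap is local and repairable within your scheme: either treat the case of a limit endpoint $[0]$ separately (the offending legs lie within $d_x$-distance $\tfrac{4}{\sqrt{n}}\sqrt[4]{A_1^k} \to 0$ of $[0]$, so one may enlarge $K$ by the union of the segments $[\,[0], a_1^k\,]$, which is still compact since any sequence drawn from infinitely many of them has determinants tending to $0$), or run Arzel\`a--Ascoli in its pointwise-relative-compactness form, verifying for each fixed $t$ that $\{\gamma^k(t)\}$ is precompact (points on the second legs have determinant at most $\det A_1^k \to 0$, hence converge to $[0]$). With that repair the rest of your argument is sound --- the eigenvalue bound $\tfrac{4}{\sqrt{n}}|\arctan(r/q)| < \tfrac{4\pi}{\sqrt{n}}$ on the exponent, the limit being a constant-speed minimizer, and uniqueness --- and it is worth noting what it buys: the paper's delicate transition case $\tr_{a_0}(c^2) = (4\pi)^2/n$, which there requires the explicit computation of $\exp_{a_0}^{-1}(a_k)$ and $\exp_{a_k}^{-1}(a_0)$ via \eqref{eq:29} together with the volume-element estimate from Theorem \ref{thm:4}, is indeed absorbed automatically by your soft argument, since in that case both limit endpoints lie in $\Matx$ and your norm bounds do apply. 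Conversely, the easy cases for the paper ($a_1 = [0]$, or both traces large) are precisely where your uniform compactness claim needs the extra care described above.
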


Continuous dependence means the following: Let $\alpha_k, \beta_k,
\alpha, \beta \in \overline{\Matx}$, with $\alpha_k \xrightarrow{d_x}
\alpha$ and $\beta_k \xrightarrow{d_x} \beta$.  Let $\gamma_k, \gamma
: [0,1] \rightarrow \overline{\Matx}$ be the geodesics, parametrized
proportionally to arc length, connecting $\alpha_k$ to $\beta_k$ and
$\alpha$ to $\beta$, respectively.  Then $\gamma_k \rightarrow \gamma$
in $C^0([0,1], \overline{\Matx})$, where the metric on
$\overline{\Matx}$ used to define $C^0([0,1], \overline{\Matx})$ is, of
course, $d_x$.  Furthermore, considering the mapping $\overline{\Matx}
\times \overline{\Matx} \rightarrow C^0([0,1], \overline{\Matx})$
given by mapping two points to the geodesic between them, one sees
that it suffices to prove continuity when $\alpha_k$ is fixed, since
this mapping is continuous if and only if it is continuous on each
factor of the domain.

The remainder of this subsection will constitute the proof of Theorem
\ref{thm:12}.
 
Let $a_0, a_1 \in \overline{\Matx}$ be given arbitrarily.  We must
show that if $\{ a_k \}$ is any sequence with $d_x(a_k, a_1)
\rightarrow 0$, then the geodesics $a_{k,t}$ connecting $a_0$ and
$a_k$ converge in the $C^0$ topology to the geodesic $a_t$ connecting
$a_0$ and $a_1$.  We will thus use the description of geodesics in
Theorem \ref{thm:10}.  Note also that by Proposition \ref{prop:1}, if
$a_1 \neq [0]$, then $\{ a_k \}$ converges in the standard topology on
$\Matx$.

We will adopt the convention that if $a_k \neq [0]$, then we write
$a_k = a_0 \exp(a_0^{-1} b_k)$ for some $b_k \in \satx$.  Furthermore,
let $c_k := b_k - \frac{1}{n} \tr_{a_0}(b_k)$ be the $a_0$-traceless
part of $b_k$.  Similarly, if $a_1 \neq [0]$, we may write $a_1 = a_0
\exp(a_0^{-1} b)$, with $c$ the $a_0$-traceless part of $b$.

We now break the proof into several cases.

\subsubsection{The case $a_1 = [0]$}
\label{sec:case-a_1-=}

In this case, assume that $a_k \neq [0]$ for $k$ large enough, since
otherwise the desired result is trivial.  Then by Proposition
\ref{prop:1}, $\sqrt[4]{A_k} \rightarrow 0$, so $\exp \left(
  \frac{1}{4} \tr_{a_0}(b_k) \right) \rightarrow 0$ as well, since
$\sqrt[4]{A_k} = \sqrt[4]{A_0} \left( \det \exp (a_0^{-1} b_k)
\right)^{1/4} = \sqrt[4]{A_0} \exp \left( \frac{1}{4} \tr_{a_0}(b_k)
\right)$.  Thus, we first see by Lemma \ref{lem:9} that if
$\tr_{a_0}(c_k^2) \geq (4 \pi)^2 / n$ and $a_k$ is $d_x$-close to
$[0]$, then $a_{k,t}$---the concatenation of the straight segments
from $a_0$ to $[0]$ and $[0]$ to $a_k$---is close to $a_t$---the
straight segment from $a_0$ to $[0]$.  Second, if $\tr_{a_0}(c_k^2) <
(4 \pi)^2 / n$ and $a_k$ is close to $[0]$, then by \eqref{eq:29},
$\exp_{a_0}^{-1}(b_k)$ is close to $-\frac{4}{n} a_0$.  Since
Riemannian geodesics vary continuously with their initial tangent
vector, we thus have that $a_{k,t}$ is $C^0$-close to $a_t =
\exp_{a_0}\left( -\frac{4}{n} t a_0 \right)$, which is nothing but a
certain parametrization of the straight segment between $a_0$ and
$a_1$.  The statement thus follows for $a_1 = [0]$.

\subsubsection{The case $a_1 \neq [0]$, $\tr_{a_0}(c^2) < (4 \pi)^2 / n$}
\label{sec:case-a_1-neq}

In this case (and all subsequent cases), for $k$ large enough, we must
have $a_k \neq [0]$.  Thus, for $k$ large enough, we can say
$\tr_{a_0}(c_k^2) < (4\pi)^2 / n$ as well, and so $a_t$
(resp.~$a_{k,t}$) is the Riemannian geodesic connecting $a_0$ and
$a_1$ (resp.~$a_k$).  Since Riemannian geodesics vary continuously
with their endpoints, the statement holds in this case.

\subsubsection{The case $a_1 \neq [0]$, $\tr_{a_0}(c^2) > (4 \pi)^2 / n$}
\label{sec:case-a_1-neq-1}

Here again, for $k$ large enough, $\tr_{a_0}(c_k^2) > (4\pi)^2 / n$ as well.
Since the straight segments between $a_k$ and $[0]$ converge to the
straight segment between $a_1$ and $[0]$, the statement also holds
here.

\subsubsection{The case $a_1 \neq [0]$, $\tr_{a_0}(c^2) = (4 \pi)^2 / n$}
\label{sec:case-a_1-neq-2}

This final case is the most involved, and requires some analysis of
the limiting behavior of Riemannian geodesics.

First, we note that $a_t$ is the concatenation of the straight
segments from $a_0$ to $[0]$ and $[0]$ to $a_1$.  Therefore, if $a_k$
is close to $a_1$ and $\tr_{a_0}(c_k^2) \geq (4 \pi)^2 / n$, then the
geodesic between $a_0$ and $a_k$ is $C^0$-close to the geodesic
between $a_0$ and $a_1$, by the same argument as in the last case.

Therefore, we must only worry about the case that $\tr_{a_0}(c_k^2) <
(4 \pi)^2 / n$.  For simplicity, we assume that this holds for all $k$
large enough, and the general case follows by combining the previous
paragraph with the following argument.

Let $\tilde{a}_{k,t} := a_{k,1-t}$ be the geodesic from $a_k$ to
$a_0$.  It suffices to show that for all $\delta, \epsilon > 0$, there
exist $\rho, \sigma < 0$ such that for $k$ large enough, we have
$\min_{t \in [0,1]} \sqrt[4]{A_{k,t}} \leq \delta$, $\abs{a'_{k,0} -
  \rho a_0}_{a_0} < \epsilon$, and $\abs{\tilde{a}'_{k,0} - \sigma
  a_k}_{a_k} < \epsilon$.  For in this case, by Theorem \ref{thm:4}
and the continuous dependence of Riemannian geodesics on initial data,
up to the first time $t_0$ (resp.~$t_1$) when $\sqrt[4]{A_{k,t}} =
\delta$ (resp.~$\sqrt[4]{\tilde{A}_{k,t}} = \delta$), we will have
that $a_{k,t}$ is arbitrarily $C^0$-close to the straight segment
between $a_0$ (resp.~$a_k$) and $[0]$.  Furthermore, we can make $k$
large enough that $\sqrt[4]{A_{t_0}}, \sqrt[4]{A_{t_1}} \leq 2
\delta$.  Thus, by Lemma \ref{lem:9}, for $t \in (t_0, t_1)$ we have
\begin{align*}
  d_x(a_{k,t}, a_t) &\leq d_x(a_{k,t}, [0]) + d_x([0], a_{k,t_0}) + d_x(a_{k,t_0},
  a_{t_0}) + d_x(a_{t_0}, [0]) + d_x([0], a_t) \\
  &< \frac{4}{\sqrt{n}}
  \delta + \frac{4}{\sqrt{n}} \delta + d_x(a_{k,t_0},
  a_{t_0}) + 2 \cdot \frac{4}{\sqrt{n}} \delta + 2 \cdot \frac{4}{\sqrt{n}} \delta,
\end{align*}
which is arbitrarily small.  (Recall from Theorem \ref{thm:4} that the
change in the volume element is quadratic, so we have
$\sqrt[4]{A_{k,t}} < \delta$ if $t \in (t_0, t_1)$.  Also, it is clear
from the form of $a_t$ that $\sqrt[4]{A_t} < 2 \delta$ for $t \in
(t_0, t_1)$.)

We first show the statement about $a'_{k,0}$.  To start, note that
$b_k \rightarrow b$, and in particular $\tr_{a_0}(b_k) \rightarrow
\tr_{a_0} b$ and $\tr_{a_0}(c_k^2) \rightarrow \tr_{a_0}(c^2) = (4
\pi)^2 / n$.  From \eqref{eq:29}, we see that
\begin{equation}\label{eq:35}
  \begin{aligned}
    a'_{k,0} = \exp_{a_0}^{-1}(a_k) &= \frac{4}{n} \left( \exp \left(
        \frac{\tr_{a_0} b_k}{4} \right) \cos \left( \frac{\sqrt{n
            \tr_{a_0}(c_k^2)}}{4} \right) -1 \right)
    a_0 \\
    &\quad + \frac{4}{\sqrt{n \tr_{a_0}(c_k^2)}} \exp \left(
      \frac{\tr_{a_0} b_k}{4} \right) \sin \left( \frac{\sqrt{n
          \tr_{a_0}(c_k^2)}}{4} \right) c_k.
  \end{aligned}
\end{equation}
By the above arguments, the factors of cosine and sine in
\eqref{eq:35} converge to $-1$ and 0, respectively.  Thus, $a'_{k,0}
\rightarrow \rho a_0$ for some $\rho < 0$, as desired.

Similarly, note that $a_0 = a_k \exp(-a_0^{-1} b_k) = a_k
\exp(a_k^{-1} (-a_k a_0^{-1} b_k))$.  We also have
\begin{gather*}
  \tr_{a_k}(-a_k a_0^{-1} b_k) = -\tr_{a_0}(b_k), \qquad
  \tr_{a_k}(-a_k a_0^{-1} c_k) = \tr_{a_0}(-c_k) = 0, \\
  \tr_{a_k}((-a_k a_0^{-1} c_k)^2) = \tr_{a_0}(c_k^2).
\end{gather*}
  Thus, in this case, \eqref{eq:29} gives
\begin{equation}\label{eq:36}
  \begin{aligned}
    \tilde{a}'_{k,0} = \exp_{a_k}^{-1}(a_0) &= \frac{4}{n} \left( \exp
      \left( -\frac{\tr_{a_0} b_k}{4} \right) \cos \left(
        \frac{\sqrt{n \tr_{a_0}(c_k^2)}}{4} \right) -1 \right)
    a_k \\
    &\quad - \frac{4}{\sqrt{n \tr_{a_0}(c_k^2)}} \exp \left(
      -\frac{\tr_{a_0} b_k}{4} \right) \sin \left( \frac{\sqrt{n
          \tr_{a_0}(c_k^2)}}{4} \right) a_k a_0^{-1} c_k.
  \end{aligned}
\end{equation}
The same argument as for $a'_{k,0}$ shows that there exists $\sigma <
0$ for which $\tilde{a}'_{k,0}$ is arbitrarily close to $\sigma a_k$
for $k$ large enough.

To conclude the proof of this case, we show the statement about
$\sqrt[4]{A_{k,t}}$.  Recall from Theorem \ref{thm:4} that
$\sqrt{A_{k,t}} = (q_k(t)^2 + r_k(t)^2) \sqrt{A_0}$, where $q_k(t) = 1
+ \frac{t}{4} \tr_{a_0}(a'_{k,0})$ and $r_k(t) = \frac{t}{4}\sqrt{n
  \tr_{a_0}((a'_{k,0})_T^2)}$.  Note that $\min_{t \in [0,1]} q_k(t)^2 =
0$ for all $k$, and that by the above arguments,
$\tr_{a_0}((a'_{k,0})_T^2) \rightarrow 0$ for $k \rightarrow \infty$.
These facts combine to show that $\min_{t \in [0,1]} \sqrt[4]{A_{k,t}}
\leq \delta$ for $k$ large enough.

Thus, Theorem \ref{thm:12} is proved.

\subsection{Geodesics in $\overline{\M}$}
\label{sec:geodesics-overlinem}

With minimal paths in $(\Matx, d_x)$ determined, we can explicitly
determine $d_x$ and thus, by Theorem \ref{thm:2}, $d$.  Furthermore,
since there exists a unique minimal path between any two elements in
$\Matx$, one sees that there is a unique minimal path between any two
elements $g_0, g_1 \in \overline{\M}$: the path $g_t$ that gives the
minimal path $g_t(x)$ between $g_0(x)$ and $g_1(x)$ for each $x \in
M$.  We thus are now able to combine these results into a theorem for
$\overline{\M}$, as well as summarize the explicit realizations of
geodesics and distance that we have determined up to this point,
reformulating them for $\overline{\M}$.  (Note that to get the
statement of the theorem we use, in addition to the theorems of the
preceding subsections, the formula \eqref{eq:39}.)

\begin{theorem}\label{thm:3}
  There exists a unique minimal path $g_t$, $t \in [0,1]$, between any
  two points $g_0, g_1 \in \overline{\M}$, given by the following.
  Let $k$ be a measurable, symmetric $(0,2)$-tensor field on $M$,
  defined on the subset $N$ where neither $g_0$ nor $g_1$ is zero,
  such that $g_1 = g_0 \exp(g_0^{-1} k)$ on this subset.  Denote by $P$
  the subset of $M$ where $\tr_{g_0}(k_T^2) < (4\pi)^2 / n$ (here
  $k_T$ denotes the traceless part of $k$).  Write $h := \psi(k)$,
  where $\psi(k)(x)$ is given as in Theorem \ref{thm:7}.  Finally, let
  $q_t$ and $r_t$ be one-parameter families of functions on $N$ given
  by
  \begin{equation*}
    q_t := 1 + \frac{t}{4} \tr_{g_0}(h), \quad r_t :=
    \frac{t}{4} \sqrt{n \tr_{g_0}(h_T^2)}.
  \end{equation*}

  Then at points $x \in N \cap P$ where $h_T(x) \neq 0$, we have
  \begin{equation*}
    g_t(x) = 
    \left( q_t^2(x) + r_t^2(x) \right)^{\frac{2}{n}} g_0(x) \exp
    \left( \frac{4}{\sqrt{n \tr_{g_0(x)}(h_T^2(x))}} \arctan \left(
        \frac{r_t(x)}{q_t(x)} \right) g_0^{-1}(x) h_T(x) \right).
  \end{equation*}
  At points $x \in N \cap P$ where $h_T(x) = 0$, we have
  \begin{equation*}
    g_t(x) = q_t^{4/n} g_0(x) =
    \left(
      1 + \frac{\sqrt[4]{G_1} - \sqrt[4]{G_0}}{\sqrt[4]{G_0}} t
    \right)^{\frac{4}{n}} g_0(x)
  \end{equation*}
  In both of the above cases $g_t(x)$ does not intersect $[0]$.
  Additionally, the range of $\arctan$ is given by the following.  At
  a point where $\tr_{g_0} h \geq 0$, it assumes values in
  $(-\frac{\pi}{2}, \frac{\pi}{2})$.  At a point where $\tr_{g_0} h <
  0$, $\arctan(r(t) / q(t))$ assumes values as follows, with $t_0 := -
  \frac{4}{\tr_{g_0} h}$:
  \begin{enumerate}
  \item in $[0, \frac{\pi}{2})$ if $0 \leq t < t_0$,
  \item in $(\frac{\pi}{2}, \pi)$ if $t_0 < t < \infty$,
  \end{enumerate}
  and we set $\arctan(r(t) / q(t)) = \frac{\pi}{2}$ if $t = t_0$.

  At all other points of $M$, $g_t(x)$ passes through $[0]$, and we
  have
  \begin{equation*}
    g_t(x) =
    \begin{cases}
      \left( 1 - \frac{\sqrt[4]{G_0(x)} + \sqrt[4]{G_1(x)}}{\sqrt[4]{G_0(x)}} t
        \right)^{\frac{4}{n}} g_0(x), & t \in \left[ 0,
          \frac{\sqrt[4]{G_0(x)}}{\sqrt[4]{G_0(x)} + \sqrt[4]{G_1(x)}} \right],
      \\[1em]
      \left(
        \frac{\sqrt[4]{G_0(x)} + \sqrt[4]{G_1(x)}}{\sqrt[4]{G_1(x)}}
        t - \frac{\sqrt[4]{G_0(x)}}{\sqrt[4]{G_1(x)}}
      \right)^{\frac{4}{n}}
      g_1(x),
      & t \in \left[ \frac{\sqrt[4]{G_0(x)}}{\sqrt[4]{G_0(x)} +
           \sqrt[4]{G_1(x)}}, 1 \right].
    \end{cases}
  \end{equation*}

  The distance induced by the $L^2$ Riemannian metric between $g_0$
  and $g_1$ is given by
  \begin{equation*}
    d(g_0, g_1) = \Omega_2(g_0, g_1) = \left( \integral{M}{}{d_x(g_0(x),
      g_1(x))^2}{d \mu} \right)^{1/2},
  \end{equation*}
  with
  \begin{equation*}
    d_x(g_0(x), g_1(x)) =
    \begin{cases}
      \abs{\psi(k)(x)}_{g_0(x)}, & x \in N \cap P,\ k_T \neq 0, \\
      \frac{4}{\sqrt{n}} \abs{\sqrt[4]{G_1(x)} - \sqrt[4]{G_0(x)}}, &
      x \in N \cap P,\ k_T = 0, \\
      \frac{4}{\sqrt{n}}
      \left(
        \sqrt[4]{G_0(x)} + \sqrt[4]{G_1(x)}
      \right), & x \not\in N \cap P.
    \end{cases}
  \end{equation*}
  Here, $\abs{\psi(k)(x)}_{g_0(x)}$, for $k_T \neq 0$, is given
  explicitly by
  \begin{equation*}
    \abs{\psi(k)(x)}_{g_0(x)} =
    \frac{4}{\sqrt{n}}
    \left(
      \sqrt{G_0(x)} - 2 \sqrt[4]{G_0(x)} \sqrt[4]{G_1(x)} \cos \left(
        \frac{\sqrt{n \tr_{g_0}(k_T(x)^2)}}{4} \right) + \sqrt{G_1(x)}
    \right)^{1/2}
                                                              \end{equation*}
\end{theorem}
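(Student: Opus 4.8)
The plan is to assemble the theorem directly from the fiberwise results of the previous subsections together with the identity $d = \Omega_2$. First I would fix representatives $g_0, g_1 \in \Mf$ of the two given points of $\overline{\M}$ and define $g_t(x)$, for each $x\in M$, to be the unique minimal path in $\overline{\Matx}$ between $g_0(x)$ and $g_1(x)$ furnished by Theorem \ref{thm:10}. The explicit formulas in the statement are obtained by translating Theorem \ref{thm:10}'s trichotomy into the present notation: the hypothesis that ``$a_1 = a_0\exp(a_0^{-1}b)$ has a solution'' holds exactly on the set $N$ where neither $g_0(x)$ nor $g_1(x)$ is $[0]$ (there $g_0(x)^{-1}g_1(x)$ is positive definite and $g_0(x)$-self-adjoint, hence a matrix exponential); on $N\cap P$ the minimal path is the Riemannian geodesic, whose formula is \eqref{eq:28} of Theorem \ref{thm:4} with initial velocity $h(x) = \psi(k)(x) = \exp_{g_0(x)}^{-1}(g_1(x))$ from Theorem \ref{thm:7}; on $N\setminus P$ it is the concatenation through $[0]$ of two straight rays, which I would parametrize so that $\sqrt[4]{G_t(x)}$ is piecewise linear in $t$, yielding precisely the displayed two-branch formula with breakpoint $t = \sqrt[4]{G_0}/(\sqrt[4]{G_0}+\sqrt[4]{G_1})$; and on $M\setminus N$ one endpoint is $[0]$ and the same ray formula applies. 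The volume-change identity \eqref{eq:21}, together with $\exp(\tfrac14\tr_{g_0}k) = \sqrt[4]{G_1}/\sqrt[4]{G_0}$, is what lets me recast $q_t$ in the pure-trace case, and the rays in the $[0]$-crossing case, in the stated explicit form; and the arctan conventions are inherited verbatim from Theorem \ref{thm:4}.

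Next I would check that $t\mapsto [g_t]$ is a genuine continuous path in $\overline{\M} = \Mfhat$. Measurability of $g_t$ in $x$ is clear since every formula is built from $g_0,g_1$ by measurable operations on a measurable partition of $M$; that $g_t\in\Mf$ for each $t$ follows from Lemma \ref{lem:8}, which gives $\sqrt[4]{G_t(x)} \leq \sqrt[4]{G_0(x)} + \tfrac{\sqrt n}{4}\,d_x(g_0(x),g_1(x))$, whence $\int_M \sqrt{G_t}\,d\mu \leq 2\Vol(M,g_0) + \tfrac{n}{8}\,\Omega_2(g_0,g_1)^2 < \infty$; and the construction visibly descends to equivalence classes. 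For continuity in $t$, I would observe (checking this case by case from the explicit formulas) that $g_t(x)$ is parametrized at the constant fiber-speed $\abs{g_t'(x)}_{g_t(x)} = d_x(g_0(x),g_1(x))$ for a.e.\ $x$, so $d_x(g_s(x),g_t(x)) = \abs{s-t}\,d_x(g_0(x),g_1(x))$, and therefore $d([g_s],[g_t]) = \abs{s-t}\,\Omega_2(g_0,g_1)$ by $d = \Omega_2$ (Theorem \ref{thm:2}, extended to $\overline{\M}$ as noted after its proof).

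This constant-fiber-speed property is also the crux of the minimality argument, and I expect it to be the main point to get right. Because $\abs{g_t'(x)}_{g_t(x)}$ is independent of $t$ and equals $d_x(g_0(x),g_1(x))$ a.e., the naive Riemannian speed satisfies $\norm{g_t'}_{g_t}^2 = \int_M d_x(g_0(x),g_1(x))^2\,d\mu = \Omega_2(g_0,g_1)^2$, a constant; hence $L(g_\cdot|_{[s,s']}) = \abs{s'-s}\,\Omega_2(g_0,g_1) = d([g_s],[g_t])$, so $g_t$ is a minimal path. (Theorem \ref{thm:1} by itself only gives $L\geq\Omega_2$; equality is exactly what this synchronized parametrization buys.) For uniqueness up to reparametrization, let $\tilde g_t$ be any minimal path from $g_0$ to $g_1$, reparametrized to constant speed, so $d([g_0],[\tilde g_t]) = t\,d([g_0],[g_1])$ and $d([\tilde g_t],[g_1]) = (1-t)\,d([g_0],[g_1])$. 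Expanding both via $d=\Omega_2$ and inserting the pointwise triangle inequality for $d_x$ followed by Cauchy--Schwarz, one finds the chain of estimates collapses to equality, forcing (i) $d_x(g_0(x),g_1(x)) = d_x(g_0(x),\tilde g_t(x)) + d_x(\tilde g_t(x),g_1(x))$ a.e., so $\tilde g_t(x)$ lies on a minimal path $g_0(x)\to g_1(x)$ in $\overline{\Matx}$, which is unique by Theorem \ref{thm:10} and equals $g_\cdot(x)$; and (ii) equality in Cauchy--Schwarz, which pins down $d_x(g_0(x),\tilde g_t(x)) = t\,d_x(g_0(x),g_1(x))$ a.e., i.e.\ $\tilde g_t(x) = g_t(x)$ a.e. Hence $[\tilde g_t] = [g_t]$.

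Finally, the distance formulas are read off from the description of the minimal paths: $d = \Omega_2$ is Theorem \ref{thm:2}; on $N\cap P$ with $k_T\neq 0$ one has $d_x(g_0(x),g_1(x)) = \abs{\psi(k)(x)}_{g_0(x)}$ since the minimal path is $t\mapsto \exp_{g_0(x)}(t\psi(k)(x))$, and the explicit cosine expression is exactly \eqref{eq:39}; on $N\cap P$ with $k_T = 0$ the minimal path is a ray and $d_x = \tfrac{4}{\sqrt n}\abs{\sqrt[4]{G_1}-\sqrt[4]{G_0}}$, which is the equality case of Lemma \ref{lem:8} for rays; and off $N\cap P$ the minimal path passes through $[0]$, so $d_x = \tfrac{4}{\sqrt n}(\sqrt[4]{G_0}+\sqrt[4]{G_1})$ by Proposition \ref{prop:1} and Theorem \ref{thm:10}. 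The assertion that $g_t(x)$ avoids $[0]$ on $N\cap P$ follows because the segment $t\psi(k)(x)$, $t\in[0,1]$, stays inside the star-shaped domain $U$ of $\exp_{g_0(x)}$ (Theorem \ref{thm:7}), so the corresponding geodesic never leaves $\Matx$.
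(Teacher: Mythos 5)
Your proposal is correct and follows essentially the same route as the paper: there, Theorem \ref{thm:3} is likewise obtained by taking, for each $x$, the unique minimal path in $\overline{\Matx}$ furnished by Theorem \ref{thm:10} and combining this fiberwise description with $d=\Omega_2$ (Theorem \ref{thm:2}) and the explicit formula \eqref{eq:39}. Your constant-fiber-speed length computation and the triangle-inequality/Cauchy--Schwarz equality analysis for uniqueness simply make explicit the steps the paper asserts with ``one sees,'' and they are sound.
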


As noted in the introduction, the existence of geodesics in
$\overline{\M}$ is in stark contrast to the situation for the
incomplete space $\M$, where the image of the geodesic mapping at a
point contains no open $d$-ball.

\section{The CAT(0) property}
\label{sec:cat0-property}

In this section, we show the CAT(0) property for $\overline{(\M, d)}$.
We will require all of the main results that we have shown so far.

The strategy is to first show that $\overline{(\Matx, d_x)}$ is
CAT(0), which will follow from several general theorems on CAT(0)
spaces, as well as a further technical lemma using the results of the
last section to show that the interiors of certain geodesic triangles
in $\overline{\Matx}$ are totally geodesic.

Following that, we will apply a concise argument to show that the
formula $d = \Omega_2$ implies that $\overline{\M}$ is a CAT(0) space
if $\overline{\Matx}$ is for each $x \in M$.  This will give the main
result.

\subsection{Background and notation}
\label{sec:background-notation}

We introduce the following conventions and notation in this section.
For any $a, b \in \Matx$, denote by $v_{a,b}$ the unique element of
$T_a \Matx \cong \satx$ such that $b = a \exp(a^{-1} v_{a,b})$.  Let
$u_{a,b} := v_{a,b} - \frac{1}{n} (\tr_a v_{a,b}) a$ denote the traceless
part of $v_{a,b}$.

If $a, b \in \overline{\Matx}$, then we denote by $[a,b]$ the unique
(unparametrized) geodesic connecting $a$ and $b$.

Let us now discuss some facts from metric geometry, in particular the
definition of a CAT(0) space.  Our main reference here is
\cite{Bridson:1999tu}.

Let $(X, \delta)$ be a path metric space.  Let $x, y, z \in X$, and
let $\triangle xyz$ be a geodesic triangle, that is, a triangle
composed of geodesics (globally distance-minimizing paths) $[x,y]$,
$[y, z]$, and $[z, x]$.  A \emph{comparison triangle} $\triangle
\bar{x} \bar{y} \bar{z}$ for $\triangle xyz$ is a triangle in
Euclidean space, $\Etwo$, with side lengths equal to the side lengths
of $\triangle xyz$.  The points $\bar{x}$, $\bar{y}$, and $\bar{z}$
are called \emph{comparison points}.  More generally, for any $w \in
\triangle xyz$---say $w \in [x,y]$---a comparison point $\bar{w} \in
\triangle \bar{x} \bar{y} \bar{z}$ for $w$ is the unique point on
$[\bar{x}, \bar{y}]$ with $\abs{\bar{w} - \bar{x}} = \delta(x, w)$ (and
consequently, $\abs{\bar{w} - \bar{y}} = \delta(y, w)$).

We say that $X$ is \emph{CAT(0)} if two conditions hold.  First, we
require that there exists a geodesic (minimal path) between any two
points in $X$.  Second, if $x, y, z \in X$ and $\triangle xyz$ is a
geodesic triangle, then for any $w_1, w_2 \in \triangle xyz$ with
comparison points $\bar{w}_1, \bar{w}_2 \in \triangle \bar{x} \bar{y}
\bar{z}$, we have
\begin{equation*}
  d(w_1, w_2) \leq \delta(\bar{w}_1, \bar{w}_2).
\end{equation*}
That is, triangles in $X$ are ``no thicker'' than in Euclidean space.

The space $X$ is said to have \emph{nonpositive curvature} if it is locally a
CAT(0) space, that is, if for each $x \in X$ there exists an open metric
ball $B(x, r_x)$, $r_x > 0$, such that $B(x, r_x)$ with the induced
metric is a CAT(0) space.

There is also a characterization, due to Alexandrov, of CAT(0) spaces
in terms of angles.  Let $x \in X$, and let $\gamma_1, \gamma_2 :
[0,1] \rightarrow X$ be geodesics, parametrized proportional to arc
length, with $\gamma_1(0) = x = \gamma_2(0)$.  Define the
\emph{Alexandrov} (or \emph{upper}) \emph{angle} between $\gamma_1$
and $\gamma_2$ to be
\begin{equation*}
  \angle (\gamma_1, \gamma_2) := \limsup_{s, t \to 0}
  \overline{\angle}_{x} (\gamma_1(s), \gamma_2(t)),
\end{equation*}
where $\overline{\angle}_{x} (\gamma_1(s), \gamma_2(t))$ denotes the
angle in a comparison triangle for $\triangle x \gamma_1(s)
\gamma_2(t)$ at the vertex corresponding to $x$.  Similarly, if
$\triangle xyz \subseteq X$ is a geodesic triangle, then the
Alexandrov angle at the vertex $x$ is the Alexandrov angle, as defined
above, between the geodesics $[x, y]$ and $[x, z]$.

We then have the following alternative characterization of CAT(0)
spaces.

\begin{theorem}[{\cite[Thm.~II.1.7]{Bridson:1999tu}}]\label{thm:15}
  Let $(X, \delta)$ be a metric space such that geodesics exist
  between any two points.  Then the following are equivalent:
  \begin{enumerate}
  \item $X$ is a CAT(0) space.
  \item The Alexandrov angle at any vertex of any geodesic triangle in
    $X$ with distinct vertices is no greater than the angle at the
    corresponding vertex of a comparison triangle in $\Etwo$.
  \end{enumerate}
\end{theorem}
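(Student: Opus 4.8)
The plan is to prove the equivalence in two directions, (1)$\Rightarrow$(2) being routine and (2)$\Rightarrow$(1) carrying essentially all the work. For (1)$\Rightarrow$(2), assume $X$ is CAT(0), fix a geodesic triangle $\triangle xyz$ with distinct vertices and a comparison triangle $\triangle \bar x\bar y\bar z \subset \Etwo$, and parametrize $[x,y]$ and $[x,z]$ proportionally to arc length as $\gamma_1,\gamma_2$. For $s,t \in (0,1]$ let $\bar p \in [\bar x,\bar y]$ and $\bar q \in [\bar x,\bar z]$ be the comparison points for $\gamma_1(s),\gamma_2(t)$. The CAT(0) inequality gives $\delta(\gamma_1(s),\gamma_2(t)) \le |\bar p - \bar q|$, while by construction $|\bar x - \bar p| = \delta(x,\gamma_1(s))$ and $|\bar x - \bar q| = \delta(x,\gamma_2(t))$. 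Hence the comparison triangle for $\triangle x\,\gamma_1(s)\,\gamma_2(t)$ has the same two sides at $\bar x$ as the Euclidean triangle $\bar x\bar p\bar q$ but a no-longer opposite side, so by the law of cosines (monotonicity of an angle in the length of its opposite side with the two adjacent sides fixed) $\overline{\angle}_x(\gamma_1(s),\gamma_2(t)) \le \angle\,\bar p\bar x\bar q = \angle_{\bar x}(\bar y,\bar z)$, the last equality since $\bar p,\bar q$ lie on the sides $[\bar x,\bar y],[\bar x,\bar z]$. Taking $\limsup_{s,t\to 0}$ yields $\angle(\gamma_1,\gamma_2) \le \angle_{\bar x}(\bar y,\bar z)$, which is (2).

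For (2)$\Rightarrow$(1) the plan is first to reduce the CAT(0) inequality to the \emph{one-point} comparison: for every geodesic triangle $\triangle pqr$, every $x \in [q,r]$, and the comparison triangle $\triangle\bar p\bar q\bar r \subset \Etwo$ with comparison point $\bar x \in [\bar q,\bar r]$, one has $\delta(p,x) \le |\bar p - \bar x|$; the full two-point inequality then follows by applying this twice together with the standard Euclidean monotonicity lemma that widening a triangle along one side (with the two other sides fixed) does not decrease distances between corresponding points on those sides. To prove the one-point comparison, assume $x$ is interior to $[q,r]$ (the endpoint cases being trivial), form comparison triangles $\bar\Delta_1$ for $\triangle pqx$ and $\bar\Delta_2$ for $\triangle pxr$ (each having an edge of length $\delta(p,x)$), and glue them in $\Etwo$ along that edge with $\bar q$ and $\bar r$ on opposite sides of it. Let $\bar\gamma_1,\bar\gamma_2$ be the angles of $\bar\Delta_1,\bar\Delta_2$ at their copies of $\bar x$. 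Since $[x,q]\cup[x,r]$ is a geodesic, the Alexandrov angle between $[x,q]$ and $[x,r]$ equals $\pi$, and the triangle inequality for Alexandrov angles gives $\pi \le \angle_x([x,p],[x,q]) + \angle_x([x,p],[x,r])$; invoking hypothesis (2) on the two sub-triangles yields $\angle_x([x,p],[x,q]) \le \bar\gamma_1$ and $\angle_x([x,p],[x,r]) \le \bar\gamma_2$, whence $\bar\gamma_1 + \bar\gamma_2 \ge \pi$. Now Alexandrov's Lemma (cf.\ \cite[Lem.~I.2.16]{Bridson:1999tu}) applies to the glued figure: it unfolds to the Euclidean triangle with side lengths $\delta(p,q)$, $\delta(p,r)$, and $\delta(q,x)+\delta(x,r) = \delta(q,r)$ — that is, exactly $\triangle\bar p\bar q\bar r$ with distinguished point $\bar x$ — and, precisely because $\bar\gamma_1+\bar\gamma_2 \ge \pi$, the distance $|\bar p - \bar x|$ after unfolding is at least the glued-figure distance $\delta(p,x)$. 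This is the one-point comparison.

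The main obstacle is the direction (2)$\Rightarrow$(1), and within it the bookkeeping around Alexandrov's Lemma: one must check carefully that the glued Euclidean configuration really unfolds to the comparison triangle of $\triangle pqr$ with $\bar x$ the correct comparison point, and that the hypothesis $\bar\gamma_1 + \bar\gamma_2 \ge \pi$ is exactly what forces distances to increase under the unfolding. A secondary, smaller point is the reduction of the two-point comparison to the one-point version. Since this equivalence is a standard result of metric geometry, in the paper one simply cites \cite[Thm.~II.1.7]{Bridson:1999tu}; the foregoing is the argument standing behind that citation.
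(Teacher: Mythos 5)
Your argument is correct, and it is essentially the standard proof of this equivalence found in the cited source (Bridson--Haefliger, Thm.~II.1.7): the easy direction via monotonicity of the Euclidean angle in the opposite side length, and the converse via the triangle inequality for Alexandrov angles at an interior point of $[q,r]$, the resulting angle sum $\bar\gamma_1+\bar\gamma_2\ge\pi$, and Alexandrov's Lemma to unfold the glued comparison triangles into the comparison triangle of $\triangle pqr$, followed by the routine one-point-to-two-point reduction. The paper itself gives no proof and simply cites \cite[Thm.~II.1.7]{Bridson:1999tu}, so your reconstruction matches the argument standing behind that citation.
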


It is this second characterization of CAT(0) spaces that we will use
to show that $\overline{\Matx}$ is CAT(0).  Roughly, it says that a
space is CAT(0) if geodesics diverge at least as quickly as lines in
$\Etwo$.

We will also need a technical lemma that says that if two triangles
have Alexandrov angles no greater than a comparison triangle, and they
can be glued along one side to form a larger triangle, then this
larger triangle also has Alexandrov angles no greater than a
comparison triangle.

\begin{lemma}[{Gluing Lemma for Triangles
    \cite[Lem.~II.4.10]{Bridson:1999tu}}]\label{lem:13}
  Let $X$ be a metric space in which every pair of points can be
  connected by a geodesic.  Let $\triangle p q_1 q_2$ be a triangle in
  $X$ with distinct vertices, and let $r \in [q_1, q_2]$ be distinct
  from $q_1$ and $q_2$.

  If, for both $i = 1, 2$, the angles of $\triangle p q_i r$ are no
  greater than the corresponding angles of a comparison triangle in
  $\Etwo$, then the angles of $\triangle p q_1 q_2$ are also no
  greater than those of a comparison triangle in $\Etwo$.
\end{lemma}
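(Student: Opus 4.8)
The plan is to reduce the statement to its Euclidean-plane analogue, Alexandrov's Lemma \cite[Lem.~I.2.16]{Bridson:1999tu}, which already packages the gluing assertion for comparison triangles in $\Etwo$. Write $a := \delta(p, q_1)$, $b := \delta(p, q_2)$, $\ell := \delta(p, r)$, $c_1 := \delta(q_1, r)$, and $c_2 := \delta(r, q_2)$. Since $r$ lies on the geodesic $[q_1, q_2]$, we have $\delta(q_1, q_2) = c_1 + c_2$, and the sub-arc of $[q_1, q_2]$ running from $q_i$ to $r$ serves as (a choice of) the geodesic $[q_i, r]$. Build in $\Etwo$ comparison triangles $\triangle \bar{p} \bar{q}_1 \bar{r}$ and $\triangle \bar{p} \bar{q}_2 \bar{r}$ for $\triangle p q_1 r$ and $\triangle p q_2 r$, and glue them along the common edge $[\bar{p}, \bar{r}]$ (of length $\ell$ on each side) so that $\bar{q}_1$ and $\bar{q}_2$ lie in opposite closed half-planes bounded by the line through $\bar{p}$ and $\bar{r}$. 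Denote by $\bar{\alpha}_i$, $\bar{\gamma}_i$, $\bar{\delta}_i$ the angles of $\triangle \bar{p} \bar{q}_i \bar{r}$ at $\bar{p}$, $\bar{q}_i$, $\bar{r}$, respectively.

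First I would verify the hypothesis needed to feed this glued configuration into Alexandrov's Lemma, namely $\bar{\delta}_1 + \bar{\delta}_2 \geq \pi$. Since $r$ is an interior point of the geodesic $[q_1, q_2]$, the two sub-geodesics $[r, q_1]$ and $[r, q_2]$ make Alexandrov angle $\angle_r(q_1, q_2) = \pi$ at $r$: any comparison triangle built from a point of $[r, q_1]$, a point of $[r, q_2]$, and $r$ has collinear vertices, so its comparison angle at $r$ is $\pi$. The triangle inequality for Alexandrov angles (cf.~\cite[\S I.1]{Bridson:1999tu}) then gives $\pi = \angle_r(q_1, q_2) \leq \angle_r(q_1, p) + \angle_r(p, q_2)$, and the hypothesis of the lemma bounds the right-hand side by $\bar{\delta}_1 + \bar{\delta}_2$. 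Hence $\bar{\delta}_1 + \bar{\delta}_2 \geq \pi$. This is the one step where the assumption on the two sub-triangles is actually used.

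Next I would apply Alexandrov's Lemma to the four points $\bar{p}, \bar{q}_1, \bar{q}_2, \bar{r} \in \Etwo$. As $\bar{\delta}_1 + \bar{\delta}_2 \geq \pi$, it produces a triangle $\triangle \bar{p}' \bar{q}_1' \bar{q}_2'$ in $\Etwo$ with side lengths $a$, $b$, $c_1 + c_2$---i.e., a comparison triangle for the big triangle $\triangle p q_1 q_2$---whose angle at $\bar{p}'$ is at least $\bar{\alpha}_1 + \bar{\alpha}_2$, whose angle at $\bar{q}_1'$ is at least $\bar{\gamma}_1$, and whose angle at $\bar{q}_2'$ is at least $\bar{\gamma}_2$. (The lemma also gives a lower bound for the cevian from $\bar{p}'$ to the point of $[\bar{q}_1', \bar{q}_2']$ at distance $c_1$ from $\bar{q}_1'$; we do not need this.) It then remains to control the Alexandrov angles of $\triangle p q_1 q_2$ by these. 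At $p$, the triangle inequality for Alexandrov angles gives $\angle_p(q_1, q_2) \leq \angle_p(q_1, r) + \angle_p(r, q_2)$, which the hypothesis bounds by $\bar{\alpha}_1 + \bar{\alpha}_2$, hence by the angle of the comparison triangle at $\bar{p}'$. At $q_1$, since $[q_1, r]$ is the initial sub-arc of $[q_1, q_2]$, the two geodesics issue from $q_1$ in the same direction, so $\angle_{q_1}(p, q_2) = \angle_{q_1}(p, r) \leq \bar{\gamma}_1$, hence at most the comparison angle at $\bar{q}_1'$; the argument at $q_2$ is symmetric. This is exactly the claimed conclusion.

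The only nontrivial input is Alexandrov's Lemma itself, which is citable; the rest is bookkeeping. The one point that requires genuine care is the inequality $\bar{\delta}_1 + \bar{\delta}_2 \geq \pi$, which hinges on recalling that the two halves of a geodesic meet at angle $\pi$ and combining this with the angle triangle inequality and the hypothesis. Degenerate configurations (collinear comparison points, $\bar{p}'$ on the line through $\bar{q}_1'$ and $\bar{q}_2'$, or $r = p$) are absorbed into Alexandrov's Lemma and the fact that a comparison triangle for any triple always exists in $\Etwo$, so no separate case analysis is needed.
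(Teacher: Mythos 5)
Your argument is correct, and it is essentially the standard proof of this lemma (which the paper does not prove but simply cites from Bridson--Haefliger, Lem.~II.4.10): glue the two Euclidean comparison triangles along $[\bar p,\bar r]$, obtain $\bar\delta_1+\bar\delta_2\geq\pi$ from the fact that the two halves of $[q_1,q_2]$ meet at Alexandrov angle $\pi$ at $r$ together with the angle triangle inequality and the hypothesis, straighten via Alexandrov's Lemma, and compare angles at $p$, $q_1$, $q_2$ exactly as you do. No gaps worth flagging.
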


To conclude the background we need, we note that if $a \in \Matx$,
then the Alexandrov and Riemannian angles between geodesic rays based
at $a$ coincide \cite[Cor.~II.1A.7]{Bridson:1999tu}.  Therefore, in
this case, we shall simply refer to the angle between two geodesics.

\subsection{The CAT(0) property for $\overline{(\Matx, d_x)}$}
\label{sec:cat0-prop-overl}

We first wish to show that $\overline{(\Matx, d_x)}$ is a space of
nonpositive curvature in the sense of Alexandrov.  Since the sectional
curvature of $(\Matx, \langle \cdot, \cdot \rangle)$ is nonpositive,
the Riemannian space $(\Matx, d_x)$ has nonpositive curvature in the
sense of Alexandrov \cite[Thm.~II.1A.6]{Bridson:1999tu}.  Therefore,
it only remains to be shown that there is a neighborhood of $[0]$ in
$\overline{(\Matx, d_x)}$ that is a CAT(0) space.  For this, it
suffices to show that any triangle intersecting $[0]$ has (Alexandrov)
angles no greater than the angles in a comparison triangle.  We also
need only consider nontrivial triangles, where none of the vertices
lie on any of the edges, since such a triangle has Alexandrov angles
zero at each vertex.

Consider a triangle $\triangle abc \subset \overline{\Matx}$, with
(Alexandrov) angles $\alpha$, $\beta$, and $\gamma$ at the (distinct)
vertices $a$, $b$, and $c$, respectively.  Assume that $[0] \in
\triangle abc$.  By renaming the vertices if necessary, $\triangle
abc$ then falls into one of the following five distinct cases:
\begin{enumerate}
\item \label{item:8} None of $a, b, c$ are equal to $[0]$, and
  $\tr_a(u_{a,b}^2), \tr_b(u_{b,c}^2), \tr_c(u_{c,a}^2) \geq
  (4\pi)^2/n$.  (See the beginning of \S \ref{sec:background-notation}
  for the definitions of $u_{a,b}$, etc.)
\item \label{item:9} None of $a, b, c$ are equal to $[0]$, and
  $\tr_a(u_{a,b}^2) < (4\pi)^2 / n$, but $\tr_b(u_{b,c}^2),
  \tr_c(u_{c,a}^2) \geq (4\pi)^2/n$.
\item \label{item:10} None of $a, b, c$ are equal to $[0]$, and
  $\tr_a(u_{a,b}^2), \tr_b(u_{b,c}^2) < (4\pi)^2 / n$, but
  $\tr_c(u_{c,a}^2) \geq (4\pi)^2/n$.
\item \label{item:12} We have $c = [0]$ and $\tr(u_{a,b}^2) \geq (4\pi)^2/n$.
\item \label{item:13} We have $c = [0]$ and $\tr(u_{a,b}^2) < (4\pi)^2/n$.
\end{enumerate}
We wish to show that all of these cases are either trivial or reduce
to case \eqref{item:13}.  The cases are depicted in Figure
\ref{fig:all-cases}, where curves with one dash represent the geodesic
$[a,b]$, with two dashes $[b,c]$, and with three $[c,a]$.

\begin{figure}[ht]
  \centering
  \def\svgwidth{0.9\textwidth}
  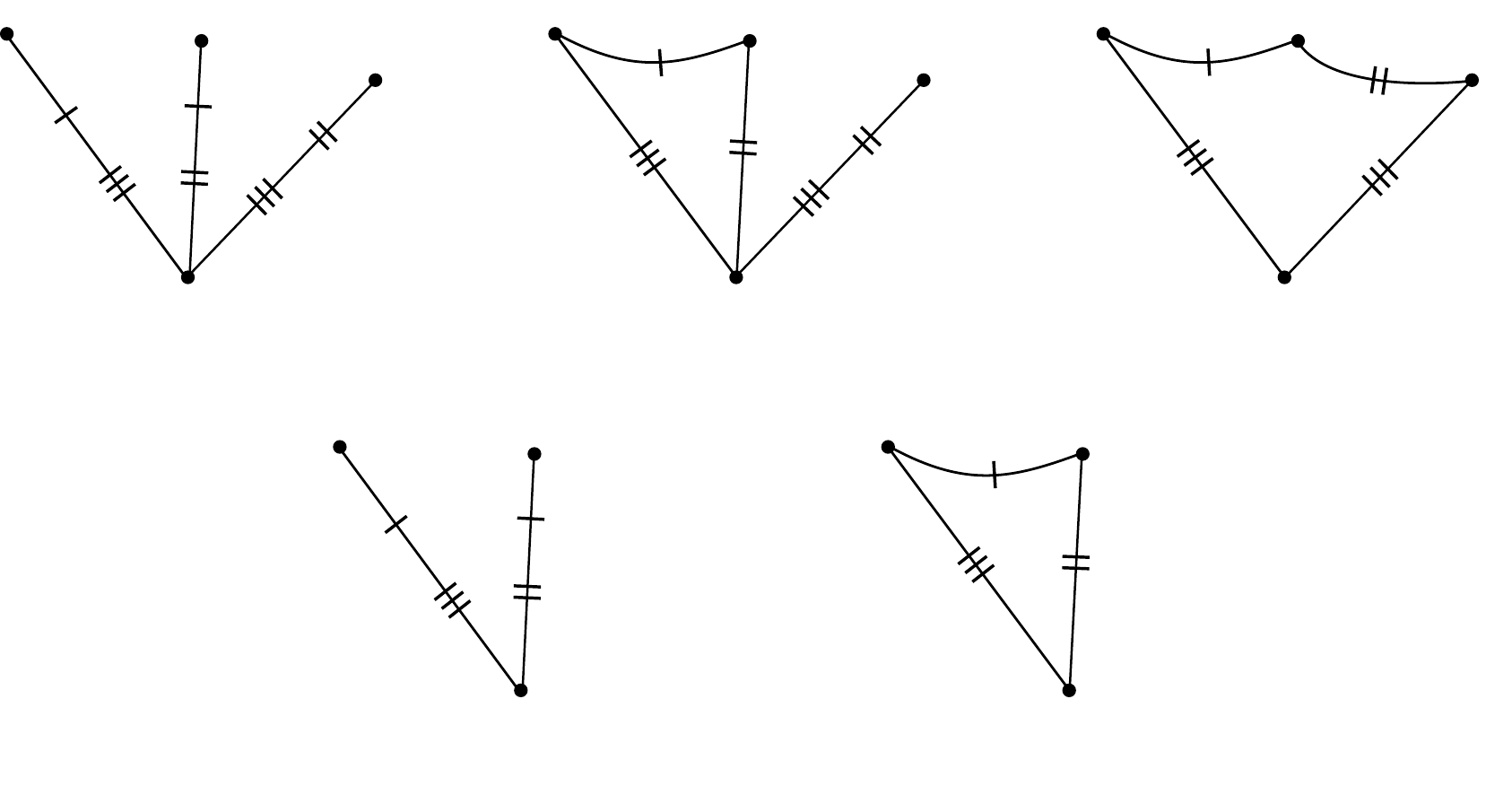
  \caption{Triangles intersecting $[0]$}\label{fig:all-cases}
\end{figure}

Let $\triangle \bar{a} \bar{b} \bar{c}$ be a comparison triangle in
$\mathbb{E}^2$ for $\triangle abc$, with comparison angles
$\bar{\alpha}$, $\bar{\beta}$, and $\bar{\gamma}$.

In case \eqref{item:8}, we immediately see that all of the angles of
$\triangle abc$ are zero.  Therefore, we trivially have $\alpha \leq
\bar{\alpha}$, $\beta \leq \bar{\beta}$, $\gamma \leq \bar{\gamma}$.

In case \eqref{item:9}, we note that $\gamma = 0$ and the angles
$\alpha$ and $\beta$ are the same as those in $\triangle ab[0]$.  On
the other hand, we have that $d_x(a, [0]) < d_x(a, c)$ and $d_x(b,
[0]) < d_x(b, c)$, so the law of cosines (see also Figure
\ref{fig:case-2}) implies that if the Alexandrov angles at the
vertices $a$ and $b$ of $\triangle ab[0]$ are smaller than the
corresponding angles in $\triangle \bar{a} \bar{b} \overline{[0]}$,
then the same holds for $\triangle abc$ and $\triangle
\bar{a}\bar{b}\bar{c}$.  Thus this case reduces to case
\eqref{item:13}.

{\centering
  \begin{figure}[ht]
    \begin{minipage}[b]{0.4\linewidth}
      \centering
      \def\svgwidth{0.75\textwidth}
      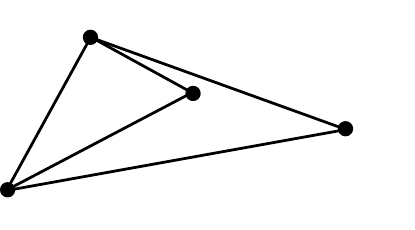
      \caption{Case \eqref{item:9}.}
      \label{fig:case-2}
    \end{minipage}
    \hspace{0.1\linewidth}
    \begin{minipage}[b]{0.4\linewidth}
      \centering
      \def\svgwidth{0.75\textwidth}
      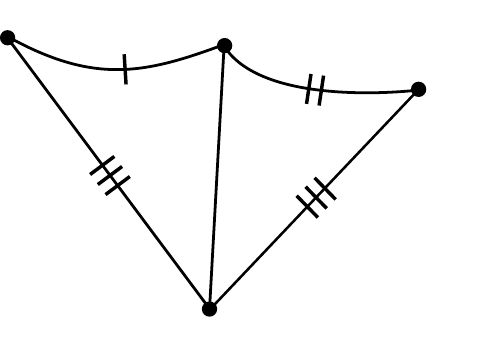
      \caption{Case \eqref{item:10}}
      \label{fig:case-3}
    \end{minipage}
  \end{figure}
}

In case \eqref{item:10}, we may apply the Gluing Lemma for Triangles
\ref{lem:13} with $p = b$, $q_1 = a$, $q_2 = c$, and $r = [0]$ (see
Figure \ref{fig:case-3}) to see that if the angles of any triangle as
in \eqref{item:13} are smaller than those of a comparison triangle,
then so are those of $\triangle abc$.

In case \eqref{item:12}, the (Alexandrov) angles in $\triangle abc$
are zero, as in case \eqref{item:8}, so the angle inequalities are
again trivially satisfied.  (Note that this triangle actually is
trivial, since $[0]$ lies on the edge $[a,b]$, but we include this
case for the sake of clarity.)

Finally, we must deal with case \eqref{item:13}.  For this, we require a
lemma.

\begin{lemma}\label{lem:11}
  Let $a$ and $b$ be as in case \eqref{item:13} above.  Then
  $\triangle ab[0] \subset P$, where $P \subset \satx$ is the plane
  \begin{equation*}
    P := \{y_1 a \exp(y_2 a^{-1} u_{a,b}) \mid (y_1, y_2) \in \R^2 \}.
  \end{equation*}
  Furthermore, the interior $O_{a,b} \subset P$ of $\triangle ab[0]$ is
  a totally geodesic local submanifold of $\Matx$.
\end{lemma}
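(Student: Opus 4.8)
The plan is to check first that the three edges of $\triangle ab[0]$ lie in $P$, and then that $P\cap\Matx$ is a two-dimensional totally geodesic submanifold of $\Matx$ which contains $O_{a,b}$ as an open subset. Throughout we may assume $u_{a,b}\neq 0$, since otherwise $b$ is a positive scalar multiple of $a$, the points $a$, $b$, $[0]$ lie on the ray $\R_+\cdot a$, and $\triangle ab[0]$ is one of the trivial triangles already excluded from consideration.

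For the edges: by definition $b = a\exp(a^{-1}v_{a,b})$, and $v_{a,b} = u_{a,b} + \frac{1}{n}(\tr_a v_{a,b})\,a$; since $\frac{1}{n}(\tr_a v_{a,b})$ times the identity endomorphism commutes with $a^{-1}u_{a,b}$, this gives $b = \exp\big(\frac{1}{n}\tr_a v_{a,b}\big)\,a\exp(a^{-1}u_{a,b})$, which is of the form $y_1\,a\exp(y_2\,a^{-1}u_{a,b})$. Hence $a,b\in P$, so all of their positive scalar multiples lie in $P$, and therefore so do the straight segments $[a,[0]]$ and $[b,[0]]$ of Theorem \ref{thm:10} (these being the positive multiples of $a$, resp.\ $b$, together with the point $[0]$, which is the $y_1=0$ point of $P$). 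For the third edge, case \eqref{item:13} gives $\tr_a(u_{a,b}^2)<(4\pi)^2/n$, so Theorems \ref{thm:9} and \ref{thm:7} say that $[a,b]$ is the Riemannian geodesic $t\mapsto\exp_a\big(t\,\psi(v_{a,b})\big)$. By \eqref{eq:29}, $\psi(v_{a,b})$ is a linear combination of $a$ and $(v_{a,b})_T = u_{a,b}$, so its $a$-traceless part is a scalar multiple of $u_{a,b}$; inserting this into \eqref{eq:28} (with $a_0 = a$ and initial velocity $\psi(v_{a,b})$) shows the geodesic equals $z_1(t)\,a\exp\big(z_2(t)\,a^{-1}u_{a,b}\big)$ for scalar functions $z_1,z_2$ — again a curve in $P$. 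Thus $\triangle ab[0]\subset P$.

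Next, negative $y_1$ yields negative-definite tensors and $y_1=0$ yields $[0]\notin\Matx$, so $P\cap\Matx$ equals $S:=\{y_1\,a\exp(y_2\,a^{-1}u_{a,b})\mid y_1>0\}$, and since $u_{a,b}\neq 0$ the parametrization $(y_1,y_2)\mapsto y_1\,a\exp(y_2\,a^{-1}u_{a,b})$ is an injective immersion, so $S$ is an embedded $2$-manifold. Fix $p=y_1\,a\exp(y_2\,a^{-1}u_{a,b})\in S$. Passing to an $a$-orthonormal basis (so that $a$ is the identity matrix and $u_{a,b}$ a traceless symmetric matrix) one computes $\partial_{y_1}p=p/y_1$ and $\partial_{y_2}p=p\,(a^{-1}u_{a,b})=:w_p$, with $\tr_p(w_p)=\tr_a(u_{a,b})=0$ and, crucially, $p^{-1}w_p=a^{-1}u_{a,b}$ — the \emph{same} $(1,1)$-tensor for every $p\in S$. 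Thus $T_pS$ is spanned by $p$ (pure-trace with respect to $p$) and $w_p$ (traceless with respect to $p$). If $\gamma$ is the Riemannian geodesic of $\Matx$ with $\gamma(0)=p$ and $\gamma'(0)=c_1p+c_2w_p\in T_pS$, then its $p$-traceless part is $c_2w_p$, so Theorem \ref{thm:4} applied at $p$ gives $\gamma(t)=z_1(t)\,p\exp\big(z_2(t)\,p^{-1}(c_2w_p)\big)=z_1(t)\,p\exp\big(z_2(t)c_2\,a^{-1}u_{a,b}\big)=z_1(t)y_1\,a\exp\big((y_2+z_2(t)c_2)\,a^{-1}u_{a,b}\big)$ for all $t$ for which $\gamma$ is defined and stays in $\Matx$ (the pure-trace case $c_2w_p=0$ being even easier). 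Hence $\gamma$ remains in $S$, so $S$ is totally geodesic.

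Finally, since $[a,b]$ is a Riemannian geodesic lying in $\Matx$ and $[a,[0]]$, $[b,[0]]$ touch $[0]$ only at an endpoint, the closed triangle $\triangle ab[0]$ meets $\overline{\Matx}\setminus\Matx$ only at the vertex $[0]$; hence $O_{a,b}\subset\Matx$, so $O_{a,b}\subset P\cap\Matx=S$, and $O_{a,b}$ is open in $S$, so it is a totally geodesic local submanifold of $\Matx$. I expect the third paragraph to be the main obstacle: identifying $T_pS$ at an \emph{arbitrary} $p\in S$ and verifying the identity $p^{-1}w_p=a^{-1}u_{a,b}$, since it is precisely this $p$-independence that makes the explicit geodesic formula of Theorem \ref{thm:4} ``close up'' inside $S$. (Alternatively, one could observe that $\Matx$ is a warped product $\R_+\times_w\M_{x,\sqrt{A}}$ over the determinant and that $S$ is $\R_+$ warped over a geodesic of $\M_{x,\sqrt{A}}$, then invoke the standard fact that $B\times_w C$ is totally geodesic in $B\times_w F$ whenever $C$ is totally geodesic in $F$; the route above has the merit of using only results already established in the paper.)
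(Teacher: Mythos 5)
Your proof is correct for the lemma as literally stated, but it reaches the ``totally geodesic'' conclusion by a genuinely different route than the paper. You argue with initial data: you identify $T_pS$ for every $p$ in the full slice $S = P \cap \Matx$ as the span of the pure-trace direction $p$ and the traceless direction $w_p$ with $p^{-1}w_p \equiv a^{-1}u_{a,b}$ independent of $p$, and then feed this into the explicit geodesic formula of Theorem \ref{thm:4} to conclude that every ambient Riemannian geodesic tangent to $S$ stays in $S$; your explicit reduction of the case $u_{a,b}=0$ to the already-excluded trivial triangles is also a point the paper passes over silently. The paper instead argues with boundary data: it takes two arbitrary points $\tilde{a}, \hat{a} \in O_{a,b}$, writes $\hat{a} = \frac{z_1}{y_1}\tilde{a}\exp\bigl(\tilde{a}^{-1}((z_2-y_2)\tilde{a}a^{-1}u_{a,b})\bigr)$, checks via the bound $(z_2-y_2)^2\tr_a(u_{a,b}^2) < (4\pi)^2/n$ that $\hat{a}$ lies in the image of $\exp_{\tilde{a}}$, uses \eqref{eq:29} to see the connecting Riemannian geodesic stays in $P$, and then — via the monotonicity of the angular coordinate from Lemma \ref{lem:12} and the uniqueness of minimal paths from Theorem \ref{thm:10} — shows it cannot recross the edges, hence stays in $O_{a,b}$. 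The trade-off matters downstream: the paper's boundary-value argument actually proves geodesic convexity of $O_{a,b}$ (the $\overline{\Matx}$-minimal path between any two of its points lies in $O_{a,b}$), and it is exactly this that is cited immediately after the lemma to get existence and uniqueness of geodesics \emph{within} $O_{a,b}$ before applying Alexandrov's patchwork theorem. Your version, which only gives vanishing second fundamental form for $S$ and hence for its open subset $O_{a,b}$, would leave that later step in need of a supplement: you would still have to check that the minimal path joining two points of $O_{a,b}$ is a Riemannian geodesic tangent to $S$ (this is where the $(4\pi)^2/n$ bound and the computation of $\exp_{\tilde a}^{-1}(\hat a)$ enter) and that it does not exit through the edges of the triangle — i.e., essentially the paper's Lemma \ref{lem:12}/Theorem \ref{thm:10} argument. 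So as a proof of the stated lemma your argument stands and is arguably cleaner, but if you intend it to replace the paper's proof you should carry the convexity statement along as well.
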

\begin{proof}
  It is clear that $[a, [0]]$ and $[b, [0]]$ lie in $P$, since these
  are segments of the lines defined by $y_2 = 0$ and $y_2 = 1$,
  respectively.  Furthermore, one sees from \eqref{eq:29} that the
  traceless part of $\exp_a^{-1}(b) = \psi(v_{a,b})$ is proportional
  to $u_{a,b}$.  Therefore, referring to \eqref{eq:28}, it is apparent
  that the geodesic $[a, b]$ is of the following form. We can find a
  pair of functions $(f_1, f_2) : [0,1]^2 \rightarrow \R_+ \times \R$
  such that it is given by $f_1(t) a \exp(f_2(t) u_{a,b})$.  Hence
  $[a,b] \subset P$, so $\triangle ab[0] \subset P$, as desired.

  That $O_{a,b}$ is a local submanifold of $\Matx$ follows immediately
  from the fact that it is an open subset in the submanifold $P \cap
  \Matx$.

  Finally, we show that $O_{a,b}$ is totally geodesic.  Let $\tilde{a}
  = y_1 a \exp(y_2 a^{-1} u_{a,b})$ and $\hat{a} = z_1 a \exp(z_2
  a^{-1} u_{a,b})$ for some $(y_1, y_2), (z_1, z_2) \in \R_+ \times
  \R$.  If we assume $\tilde{a}, \hat{a} \in O_{a,b}$, then we have $0
  < y_2, z_2 < 1$.

  Now we write $a = y_1^{-1} \tilde{a} \exp(-y_2 a^{-1}
  u_{a,b})$ and
  \begin{equation*}
    \hat{a} = \frac{z_1}{y_1} \tilde{a} \exp((z_2 - y_2) a^{-1}
    u_{a,b}) = \frac{z_1}{y_1} \tilde{a} \exp(
    \tilde{a}^{-1} ((z_2 - y_2) \tilde{a} a^{-1} u_{a,b})).
  \end{equation*}
  Note that $\tr_{\tilde{a}}(\tilde{a} a^{-1} u_{a,b}) =
  \tr_a(u_{a,b}) = 0$.  Furthermore,
  \begin{equation*}
    \tr_{\tilde{a}}(((z_2 - y_2)
    \tilde{a} a^{-1} u_{a,b})^2) = (z_2 - y_2)^2 \tr_a(u_{a,b}^2) < \frac{(4\pi)^2}{n},
  \end{equation*}
  since $z_2 < 1$, $y_2 > 0$, and by assumption,
  $\tr_a(u_{a,b}^2) < (4\pi)^2 / n$.  Thus, $\hat{a} \in
  \im(\exp_{\tilde{a}})$.
  
  Let $\psi := \exp_{\tilde{a}}^{-1}$, and let $c := \psi(\hat{a})$.
  From the above discussion, and recalling \eqref{eq:29}, we note that
  $c_T$ is proportional to $\tilde{a} a^{-1} u_{a,b}$, say $c_T =
  \lambda \tilde{a} a^{-1} u_{a,b}$.  Let $\gamma_t$ be the geodesic
  segment between $\tilde{a}$ and $\hat{a}$.  As above, we can find a
  pair of functions $(f_1, f_2) : [0,1]^2 \rightarrow \R_+ \times \R$
  such that
  \begin{align*}
    \gamma_t &= f_1(t) \tilde{a} \exp(f_2(t) \tilde{a}^{-1} c_T) =
    f_1(t) \tilde{a} \exp(\lambda f_2(t) \tilde{a}^{-1} (\tilde{a}
    a^{-1} u_{a,b})) \\
    &= y_1 f_1(t) a \exp((y_2 + \lambda f_2(t)) a^{-1} u_{a,b}).
  \end{align*}
  Thus $\gamma_t$ is contained in $P$.

  To see that $\gamma_t$ is contained in $O_{a,b}$, we first note that
  minimality of $\gamma_t$ and Lemma \ref{lem:12} imply that $f_2$ is
  a monotone function.  Thus, if $\gamma_t$ exits $O_{a,b}$, then
  there must exist $t_0 \in (0,1)$ with $\gamma_{t_0} \in [a,b]$.
  Since distinct geodesics cannot intersect tangentially, there must
  be $t_1 > t_0$ with $\gamma_{t_1} \in [a,b]$, and $\gamma_t \not\in
  [a,b]$ for $t \in (t_0, t_1)$.  But minimality of $\gamma_t$ and
  $[a,b]$ then imply the existence of two distinct minimal paths
  between $\gamma_{t_0}$ and $\gamma_{t_1}$, contradicting Theorem
  \ref{thm:10}.
        This completes the proof.
\end{proof}

We now return to case \eqref{item:13}.  Since $O_{a,b}$ is a totally
geodesic local submanifold of a manifold with nonpositive Riemannian
curvature, it is itself a space of nonpositive curvature (in the sense
of Alexandrov) \cite[Thm.~II.1A.6]{Bridson:1999tu}.  By Theorem
\ref{thm:10} and Lemma \ref{lem:11}, there exists a unique minimal
path between any two points in $O_{a,b}$.  Furthermore,
by Theorem \ref{thm:12}, minimal paths between points in $O_{a,b}$
vary continuously with their endpoints.  Thus, we may apply a theorem
of Alexandrov \cite[Thm.~II.4.9]{Bridson:1999tu} to see that $O_{a,b}$
is a CAT(0) space.

Consider the completion (w.r.t.~$d_x$), $\overline{O_{a,b}} = O_{a,b}
\cup \triangle ab[0]$.  By \cite[Cor.~II.3.11]{Bridson:1999tu}, the
completion of a CAT(0) space is CAT(0), so $\overline{O_{a,b}}$ is
CAT(0).  In particular, $\triangle ab[0] \subset \overline{O_{a,b}}$
satisfies the CAT(0) inequality, and so has Alexandrov angles no
greater than a comparison triangle $\triangle \bar{a} \bar{b}
\overline{[0]}$, as was to be shown.

This completes all possible cases, and so we have the following.

\begin{theorem}\label{thm:11}
  $\overline{(\Matx, d_x)}$ is a space of nonpositive curvature in the
  sense of Alexandrov.
\end{theorem}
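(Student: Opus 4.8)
The plan is to invoke the angle characterization of the CAT(0) property (Theorem \ref{thm:15}) and to check nonpositive curvature separately on the part of $\overline{\Matx}$ away from the singular point $[0]$ and on a neighborhood of $[0]$. Away from $[0]$, the space is just the smooth Riemannian manifold $(\Matx,\langle\cdot,\cdot\rangle)$, whose sectional curvature is nonpositive, so it is locally CAT(0) by \cite[Thm.~II.1A.6]{Bridson:1999tu}. Hence the only thing left is to exhibit a CAT(0) metric ball around $[0]$, and by Theorem \ref{thm:15} it suffices to show that every geodesic triangle $\triangle abc$ in $\overline{\Matx}$ containing $[0]$ has Alexandrov angles no larger than the corresponding angles of a Euclidean comparison triangle $\triangle\bar a\bar b\bar c$; degenerate triangles, where a vertex lies on an edge, are immediate, since all their Alexandrov angles vanish.

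Using the explicit form of geodesics in $\overline{\Matx}$ from Theorem \ref{thm:10}, I would first sort a nontrivial triangle $\triangle abc$ through $[0]$ into finitely many configurations, according to whether one of the vertices equals $[0]$ and to which of $\tr_a(u_{a,b}^2),\tr_b(u_{b,c}^2),\tr_c(u_{c,a}^2)$ exceed the threshold $(4\pi)^2/n$; after renaming vertices this gives the five cases listed above. When two or more edges run through $[0]$ (cases (1) and (4)), all Alexandrov angles of $\triangle abc$ are zero and the comparison inequality is trivial. When exactly one edge runs through $[0]$, the angles of $\triangle abc$ coincide with the angles of a subtriangle having $[0]$ as a vertex; then the law of cosines together with the distance bounds of Lemma \ref{lem:9} (case (2)), or the Gluing Lemma for Triangles \ref{lem:13} applied with $r=[0]$ (case (3)), reduces the required angle inequality to the one remaining situation, case (5): a nontrivial triangle $\triangle ab[0]$ with $\tr_a(u_{a,b}^2)<(4\pi)^2/n$.

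The heart of the argument is case (5). Here I would use Lemma \ref{lem:11} to put the whole triangle $\triangle ab[0]$ inside the $2$-plane $P=\{y_1 a\exp(y_2 a^{-1}u_{a,b})\mid (y_1,y_2)\in\R^2\}$ and to identify its open interior $O_{a,b}$ as a totally geodesic local submanifold of $\Matx$. Since $\Matx$ has nonpositive Riemannian curvature, so does $O_{a,b}$; by Theorem \ref{thm:10} together with Lemma \ref{lem:11}, minimal paths between points of $O_{a,b}$ exist and are unique, and by Theorem \ref{thm:12} they vary continuously with their endpoints. These are exactly the hypotheses of the theorem of Alexandrov \cite[Thm.~II.4.9]{Bridson:1999tu}, so $O_{a,b}$ is CAT(0); its completion $\overline{O_{a,b}}=O_{a,b}\cup\triangle ab[0]$ is then CAT(0) by \cite[Cor.~II.3.11]{Bridson:1999tu}, and in particular $\triangle ab[0]$ satisfies the CAT(0) inequality, so its Alexandrov angles are dominated by those of $\triangle\bar a\bar b\overline{[0]}$. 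Since cases (1)--(5) exhaust the possibilities, every triangle meeting $[0]$ satisfies the angle comparison, a small ball about $[0]$ is CAT(0), and the theorem follows.

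The step I expect to be the main obstacle is case (5), and within it the last part of Lemma \ref{lem:11}: proving that a geodesic joining two points of $O_{a,b}$ never leaves $O_{a,b}$. The natural route is to use the monotonicity coming from Lemma \ref{lem:12} to say the radial coordinate $f_2$ along the geodesic is monotone, so an excursion into the edge $[a,b]$ would force a second intersection with $[a,b]$ and hence two distinct minimal paths between a pair of points, contradicting the uniqueness in Theorem \ref{thm:10}. Verifying the continuous-dependence hypothesis (Theorem \ref{thm:12}) is equally indispensable here, and—because of the delicate limiting behaviour of Riemannian geodesics as $\tr_{a_0}(c^2)\to(4\pi)^2/n$—is itself the most technical ingredient feeding into this proof.
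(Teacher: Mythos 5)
Your proposal is correct and follows essentially the same route as the paper: nonpositive Riemannian curvature handles everything away from $[0]$, triangles through $[0]$ are split into the same five cases with (1) and (4) trivial, (2) reduced by the law of cosines, (3) by the Gluing Lemma \ref{lem:13}, and the key case (5) settled via Lemma \ref{lem:11}, Theorems \ref{thm:10} and \ref{thm:12}, Alexandrov's theorem, and the completion result. The only minor quibble is that in case (2) the needed comparison $d_x(a,[0])<d_x(a,c)$, $d_x(b,[0])<d_x(b,c)$ comes from the fact that the edges $[a,c]$ and $[b,c]$ pass through $[0]$ (so distances add along them), rather than from Lemma \ref{lem:9}.
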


Furthermore, by Theorem \ref{thm:10}, there exists a unique minimal
geodesic between any two points in $\overline{\Matx}$, and by Theorem
\ref{thm:12}, these geodesics vary continuously with their endpoints.
Thus, we may again apply the aforementioned theorem of Alexandrov
\cite[Thm.~II.4.9]{Bridson:1999tu} to immediately obtain:

\begin{theorem}\label{thm:13}
  $\overline{(\Matx, d_x)}$ is a CAT(0) space.
\end{theorem}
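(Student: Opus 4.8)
The plan is to upgrade the \emph{local} CAT(0) property, already established in Theorem \ref{thm:11}, to the \emph{global} one by invoking the metric-space analogue of the Cartan--Hadamard theorem due to Alexandrov, in the form \cite[Thm.~II.4.9]{Bridson:1999tu}: a complete metric space that is locally CAT(0), in which every pair of points is joined by a unique geodesic, and in which geodesics vary continuously with their endpoints, is globally CAT(0). So the proof will amount to verifying that $\overline{(\Matx, d_x)}$ satisfies each of these hypotheses.

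First I would record completeness: $\overline{\Matx}$ is by construction a metric completion, hence complete, and it is a path (indeed geodesic) metric space --- this was noted just after Proposition \ref{prop:1}, with existence of minimal paths provided by Theorem \ref{thm:10}; in particular it is connected. Next, uniqueness of the geodesic between any two points (up to reparametrization) is again exactly Theorem \ref{thm:10}. Local nonpositive curvature in the sense of Alexandrov is Theorem \ref{thm:11}. Finally, continuous dependence of geodesics on their endpoints is Theorem \ref{thm:12}. With all four inputs in hand, \cite[Thm.~II.4.9]{Bridson:1999tu} applies verbatim and gives the conclusion.

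I do not expect a genuine obstacle at this stage: all of the real work has already been pushed into Theorems \ref{thm:10}--\ref{thm:12} (most notably the continuity statement, which needed the delicate analysis near the singular point $[0]$, where $\overline{\Matx}$ fails to be locally compact). The only point that warrants a moment's care is to confirm that the precise statement of \cite[Thm.~II.4.9]{Bridson:1999tu} being cited has exactly the hypotheses ``complete, uniquely geodesic, geodesics continuous in their endpoints, locally CAT(0)'' --- as opposed to a version phrased with ``simply connected'' in place of the uniqueness and continuity conditions --- since the lack of local compactness rules out the more classical Hopf--Rinow-type shortcuts. Once this is checked, the theorem follows by a one-line citation.
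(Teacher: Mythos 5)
Your proposal is correct and follows essentially the same route as the paper: the paper also deduces the theorem by combining Theorem \ref{thm:11} (local nonpositive curvature), Theorem \ref{thm:10} (existence and uniqueness of geodesics), and Theorem \ref{thm:12} (continuous dependence on endpoints), and then citing \cite[Thm.~II.4.9]{Bridson:1999tu}. Your extra caution about which version of the Alexandrov/Cartan--Hadamard statement is being invoked is sensible but does not change the argument.
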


\subsection{The CAT(0) property for $\overline{\M}$}
\label{sec:cat0-prop-overl-1}

We now claim that Theorems \ref{thm:3} and \ref{thm:13} imply that
$\overline{(\M, d)}$ is a CAT(0) space as well.  Theorem \ref{thm:3}
gives the existence of geodesics, so it remains to show that triangles
in $\overline{\M}$ satisfy the CAT(0) inequality.

To see this, consider the product bundle $M \times \Etwo$, where
$\Etwo = (\R^2, \absdot)$, as above, denotes Euclidean space with its
standard norm.  We denote the fiber over $x$ of this bundle by
$\Etwo_x$.  Our reference volume form $\mu$ on $M$ allows us to define
a Hilbert space $L^2(M \times \Etwo, \mu)$ of square-integrable
sections of this bundle.  As a Hilbert space, it is flat; in
particular, it is CAT(0) \cite[p.~167]{Bridson:1999tu}.  Thus, to show
that triangles in $\overline{\M}$ satisfy the CAT(0) inequality, it
suffices to take a comparison triangle in $L^2(M \times \Etwo, \mu)$
rather than in $\Etwo$.

Let $g, h, k \in \overline{\M}$, and let $\ell_1, \ell_2 \in \triangle
g h k$ be any points.  Assume, without loss of generality, that
$\ell_1 \in [g, h]$, the geodesic between $g$ and $h$, and $\ell_2 \in
[g, k]$.  By Theorem \ref{thm:3}, when evaluated at the point $x \in
M$, the geodesic $[g, h]$ in $\overline{\M}$ agrees with the geodesic
$[g(x), h(x)]$ in $\overline{\Matx}$.  Thus $\ell_1(x) \in [g(x),
h(x)]$ for all $x$, and similarly for $\ell_2(x)$.  Define $\bar{g},
\bar{h}, \bar{k}, \bar{\ell}_1, \bar{\ell}_2 \in L^2(M \times \Etwo,
\mu)$ by the following.  For all $x$, let $\bar{g}(x) := (0, 0)$ and
$\bar{h}(x) := (d_x(g(x), h(x)), 0)$.  Let $\bar{k}(x)$ be the unique
point in the closed upper half-plane with $\abs{\bar{k}(x) -
  \bar{g}(x)} = d_x(g(x), k(x))$ and $\abs{\bar{k}(x) - \bar{h}(x)} =
d_x(h(x), k(x))$.  From this description and Theorem \ref{thm:3}, one
then sees that $\triangle \bar{g}(x) \bar{h}(x) \bar{k}(x)$
(resp.~$\triangle \bar{g} \bar{h} \bar{k}$) is a comparison triangle
for $\triangle g(x) h(x) k(x)$ (resp.~$\triangle g h k$).  Finally,
define $\bar{\ell}_1(x)$ (resp.~$\bar{\ell}_2(x)$) to be the unique
point on the segment between $\bar{g}(x)$ and $\bar{h}(x)$
(resp.~$\bar{k}(x)$) with $\abs{\bar{\ell}_1(x) - \bar{g}(x)} =
d_x(g(x), \ell_1(x))$ (resp.~$\abs{\bar{\ell}_2(x) - \bar{g}(x)} =
d_x(g(x), \ell_2(x))$).  One sees from this definition that
$\bar{\ell}_i$ is a comparison point for $\ell_i$ for $i = 1,2$.

Now, since $(\Matx, d_x)$ is CAT(0), we may consider $\triangle g(x)
h(x) k(x)$ and its comparison triangle $\triangle \bar{g}(x)
\bar{h}(x) \bar{k}(x)$ to see that $d_x(\ell_1(x), \ell_2(x)) \leq
\abs{\bar{\ell}_2(x) - \bar{\ell}_1(x)}$ for all $x \in M$.
Therefore,
\begin{align*}
  d(\ell_1, \ell_2) &= \Omega_2(\ell_1, \ell_2) =
  \left(
    \integral{M}{}{d_x(\ell_1(x), \ell_2(x))^2}{d \mu}
  \right)^{1/2} \\
  &\leq
  \left(
    \integral{M}{}{\abs{\bar{\ell}_2(x) - \bar{\ell}_1(x)}^2}{d \mu}
  \right)^{1/2} = \norm{\bar{\ell}_2 - \bar{\ell}_1}_{L^2(M \times
    \Etwo, \mu)}.
\end{align*}
Thus, $\triangle g h k$ satisfies the CAT(0) inequality with respect
to the comparison triangle $\triangle \bar{g} \bar{h} \bar{k}$.  Since
the points $g, h, k \in \overline{\M}$ were arbitrary, we have proved
the following.

\begin{theorem}\label{thm:14}
  The metric completion of $\M$ with respect to its $L^2$ Riemannian
  metric, $\overline{(\M, d)}$, is a CAT(0) space.

  In particular, if $\triangle g h k \subset \M$ is any geodesic
  triangle consisting of smooth metrics, then this triangle satisfies
  the CAT(0) inequality.
\end{theorem}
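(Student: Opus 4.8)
The plan is to derive the CAT(0) property for $\overline{(\M,d)}$ from its fiberwise counterpart, Theorem \ref{thm:13}, by exploiting the identity $d=\Omega_2$ together with the fact (Theorem \ref{thm:3}) that a minimal path in $\overline{\M}$ is exactly the family of fiberwise minimal paths in $\overline{\Matx}$. Existence of geodesics between any two points of $\overline{\M}$ is already supplied by Theorem \ref{thm:3}, so the only thing to establish is that every geodesic triangle in $\overline{\M}$ is thinner than its Euclidean comparison triangle. The device that makes this work is to compare triangles not directly against $\Etwo$ but against the Hilbert space $\mathcal{H}:=L^2(M\times\Etwo,\mu)$ of $\mu$-square-integrable sections of the trivial bundle $M\times\Etwo\to M$, with fiber $\Etwo_x$ over $x$. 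Being a Hilbert space, $\mathcal{H}$ is flat, hence CAT(0); moreover any geodesic triangle in $\mathcal{H}$ lies in an affine $2$-plane on which the metric is Euclidean, so it is isometric to its own Euclidean comparison triangle and may therefore be used in place of one in $\Etwo$.

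Given a geodesic triangle $\triangle ghk\subset\overline{\M}$, I would first produce, for $\mu$-a.e.\ $x$, a Euclidean comparison triangle $\triangle\bar g(x)\bar h(x)\bar k(x)\subset\Etwo_x$ for the fiber triangle $\triangle g(x)h(x)k(x)$---which by Theorem \ref{thm:3} is precisely the geodesic triangle of $\overline{\Matx}$ with those vertices---normalized so that $\bar g(x)=(0,0)$, $\bar h(x)=(d_x(g(x),h(x)),0)$, and $\bar k(x)$ lies in the closed upper half-plane; using the explicit formulas of Theorem \ref{thm:3} this depends measurably on $x$. Assembling these fiberwise triangles yields sections $\bar g,\bar h,\bar k\in\mathcal{H}$, and since $\|\bar h-\bar g\|_{\mathcal{H}}^2=\int_M d_x(g(x),h(x))^2\,d\mu=\Omega_2(g,h)^2=d(g,h)^2$ (and likewise for the other two sides), $\triangle\bar g\bar h\bar k$ is a genuine comparison triangle for $\triangle ghk$.

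For the comparison points, take $\ell_1,\ell_2\in\triangle ghk$, say $\ell_1\in[g,h]$ and $\ell_2\in[g,k]$. Because the explicit geodesics of Theorem \ref{thm:3} are fiberwise parametrized proportionally to $d_x$-arc length, the identity $d=\Omega_2$ forces the $\overline{\M}$-geodesic $[g,h]$ itself to be parametrized proportionally to arc length, so $\ell_1(x)$ divides $[g(x),h(x)]$ in the same ratio $s$ in which $\ell_1$ divides $[g,h]$. Setting $\bar\ell_1(x)$ to be the point of $[\bar g(x),\bar h(x)]$ at distance $d_x(g(x),\ell_1(x))=s\,d_x(g(x),h(x))$ from $\bar g(x)$, and $\bar\ell_2(x)$ analogously, one checks that $\bar\ell_1=(1-s)\bar g+s\bar h$ and $\bar\ell_2$ lie on the $\mathcal{H}$-segments $[\bar g,\bar h]$ and $[\bar g,\bar k]$ and that $\|\bar\ell_1-\bar g\|_{\mathcal{H}}=d(g,\ell_1)$, $\|\bar\ell_1-\bar h\|_{\mathcal{H}}=d(h,\ell_1)$ (again by $d=\Omega_2$), so that $\bar\ell_1,\bar\ell_2$ are comparison points for $\ell_1,\ell_2$ in $\triangle\bar g\bar h\bar k$. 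Now Theorem \ref{thm:13}, applied fiberwise to the CAT(0) space $\overline{(\Matx,d_x)}$, gives $d_x(\ell_1(x),\ell_2(x))\le|\bar\ell_1(x)-\bar\ell_2(x)|$ for $\mu$-a.e.\ $x$; squaring, integrating, taking square roots, and then using $d=\Omega_2$ on the left yields
\[
  d(\ell_1,\ell_2)=\Omega_2(\ell_1,\ell_2)\le\|\bar\ell_2-\bar\ell_1\|_{\mathcal{H}},
\]
which is the CAT(0) inequality for $\triangle ghk$ relative to the $\mathcal{H}$- (equivalently, the $\Etwo$-) comparison triangle. For the concluding assertion, $\M$ embeds isometrically in $\overline{\M}$ and any distance-minimizing path in $\M$ has the same length in $\overline{\M}$, hence is the unique $\overline{\M}$-geodesic of Theorem \ref{thm:3}; so a geodesic triangle in $\M$ consisting of smooth metrics is a geodesic triangle in $\overline{\M}$ and inherits the CAT(0) inequality from the first part.

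I expect the only point requiring genuine care to be the verification that the fiberwise data assemble into honest comparison data in $\mathcal{H}$---concretely, that the fiberwise comparison points $\bar\ell_i(x)$ sweep out a point lying on the $\mathcal{H}$-geodesic segment $[\bar g,\bar h]$ (resp.\ $[\bar g,\bar k]$). This rests on the arc-length compatibility between an $\overline{\M}$-geodesic and its fibers---equivalently, on the equality case of Minkowski's inequality in $L^2$, which forces the fiberwise division ratio to be constant---and on measurability of the construction in $x$; both are routine given the explicit formulas of Theorem \ref{thm:3}. The substantive difficulty of the theorem lies entirely in the preceding sections, namely Theorems \ref{thm:2}, \ref{thm:3}, and \ref{thm:13}.
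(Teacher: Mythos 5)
Your proposal is correct and follows essentially the same route as the paper: existence of geodesics from Theorem \ref{thm:3}, fiberwise comparison triangles assembled into sections of $L^2(M\times\Etwo,\mu)$, the fiberwise CAT(0) inequality from Theorem \ref{thm:13}, and integration via $d=\Omega_2$. The only difference is that you spell out the verification (constant fiberwise division ratio, so the assembled $\bar\ell_i$ lie on the $\mathcal{H}$-segments) that the paper states as immediate from the definitions, which is a welcome but minor elaboration.
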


Note that even though $\overline{\M}$ is a CAT(0) space, we cannot
infer that $\M$ is a space of nonpositive curvature in the sense of
Alexandrov---i.e., locally CAT(0)---as we could in finite dimensions.
This is because, as one sees from Theorem \ref{thm:3}, there is no
$d$-ball of positive radius around any point such that geodesics
between points in this ball exist and remain in $\M$.

\section{Outlook}
\label{sec:outlook}

There are a number of open questions concerning the $L^2$ metric.  As
mentioned in the introduction, one may consider harmonic maps with
$(\M, d)$ or its completion as a target, or consider groups of
isometries of $\M$.  Another is to find submanifolds of $\M$ on which
convergence in the $L^2$ metric implies a more synthetic-geometric
notion of convergence---e.g., Gromov--Hausdorff convergence, or
convergence as a metric-measure space
\cite[Sect.~3$\frac{1}{2}$]{gromov-metric-2007}.  On a related note,
we point out that in \cite[Cor.~6.7]{clarke11:_ricci_kahler}, it has been shown
that if the Kähler-Ricci flow converges w.r.t.~$d$, then it converges
smoothly---so at least for these special paths, convergence in the
$L^2$ metric already implies stronger convergence.

Other open questions that seem difficult to solve concern the moduli
space of Riemannian metrics, also called the space of Riemannian
structures or, sometimes, superspace.  If $\D$ is the group of
orientation-preserving diffeomorphisms of $M$, then this space is the
quotient $\M / \D$, where $\D$ of course acts by pulling back metrics.
It is not hard to see that the $L^2$ metric is invariant under this
action, and so it projects to the quotient, which is a stratified
space \cite{bourguignon75:_une_strat_de_lespac_des_struc_rieman}.

From the metric-geometric standpoint, the first question one must ask
is whether the $L^2$ metric induces a metric space structure on the
quotient.  Because the $L^2$ metric is weak and the quotient is
singular, we cannot \emph{a priori} exclude the situation that two
orbits of the diffeomorphism group become arbitrarily close to one
another.  The very weak nature of convergence with respect to the
$L^2$ metric makes finding a lower bound for the distance between
orbits, either directly or through some geometric invariants, a
difficult proposition (cf.~\cite[Sect.~4.3]{Clarke:2010ur} for more on
this).

Assuming the previous question is answered in the affirmative, another
question one could ask about the moduli space is what its completion
with respect to the $L^2$ metric is.  Potentially, some of the
very pathological degenerations that lead to losing all regularity of
a limit metric in the completion of $(\M, d)$ come from degenerations
along a diffeomorphism orbit.

\bibliography{main,papers}
\bibliographystyle{hamsalpha}

\end{document}